\documentclass{amsart}
\usepackage{amssymb,latexsym,amscd}
\usepackage{pb-diagram}  
\renewcommand{\eqref}[1]{(\ref{#1})}   
\theoremstyle{plain}
\newtheorem{theorem}{Theorem}
\newtheorem{theorema}{Theorem}

\newtheorem{lemma}{Lemma}

\newtheorem{proposition}{Proposition}
\newtheorem{algorithm}{Algorithm}
\newtheorem{conjecture}{Conjecture}
\theoremstyle{definition}

\theoremstyle{remark}

\newcommand {\R}{{\mathbb{R}}}

\newcommand {\N}{{\mathbb{N}}}

\newcommand {\LL}{{\mathcal{L}}}

\newcommand {\eps}{{\epsilon}}

\begin{document}

\title{ On the greatest prime factor of $ab+1$  }
\date{\today}
\author{\'Etienne Fouvry}
\address{ Univ. Paris--Sud, Laboratoire  de  Math\' ematiques d'Orsay, CNRS, F-91405 Orsay Cedex, France}
\email{Etienne.Fouvry@math.u-psud.fr}
 
\subjclass{Primary 11R29; Secondary 11R11 }

\begin{abstract}  We improve some  results on the size of the greatest prime factor of  the   integers of the form $ab+1$ where $a$ and $b$ belong to some general  given finite sequences $\mathcal A$ and $\mathcal B$ with rather large density.
\end{abstract}
\keywords{greatest prime factor, primes in arithmetic progressions}
\thanks{The author  benefited from the financial support of Institut Universitaire de France.}
 
\maketitle

\renewcommand{\theenumi}{(\roman{enumi})}
\section{Introduction}\label{intro}
  Let $N$ be an integer $\geqslant 1$ and  let $\mathcal A$ and $\mathcal B$ be two sets of integers, both included in $[1,N]$.   With these two finite sets, we build the set ${\mathcal C}={\mathcal C}({\mathcal A}, {\mathcal B})$, defined by
  $$
  {\mathcal C} = {\mathcal C}({\mathcal A}, {\mathcal B}) :=\bigl\{ab+1\, ;\ a\in {\mathcal A},\, b\in {\mathcal B}\bigr\}.
  $$
 Let $P^+ (n)$ be the greatest prime factor of the integer  $n$ if  $n\geqslant  2$ and $P(1)=1$. The object of this paper is to give a lower bound for the integer 
  $$
 \varGamma^+ ( {\mathcal A}, {\mathcal B}, N):=\max _{c\in{\mathcal C}}  P^+ (c),
 $$
in terms of $N$  and of the cardinalities
$ 
 \vert {\mathcal A}\vert $ and $\vert {\mathcal B}\vert.
$ 
 The interest of this question is  that we suppose no condition of regularity for the sets $\mathcal A$ and $\mathcal B$, but we   only impose some lower bound for $ \vert \mathcal A \vert $ and $\vert \mathcal B\vert $.  The purpose of the present paper is to improve the following
\begin{theorema} (See \cite[Theorem 2]{St}) For any positive $\eps$, there exist  positive  constants $c_1$, $c_2$ and $c_3$, depending at most on $\eps$, in an effective way,  such that, for any $N\geqslant  c_1$, for any subsets $\mathcal A$ and $\mathcal B$ of $[1,N]$, satisfying the inequalities
$$
 \vert  \mathcal A\vert,\ \vert \mathcal B \vert  \geqslant  c_2{N\over ((\log N) /\log \log N)^{1\over 2}},
$$
we have the inequality
\begin{equation}\label{1jan1}
  \varGamma^+ ( {\mathcal A}, {\mathcal B}, N)\geqslant  \min \Bigl\{ N^{1+(1-\eps)(\min ( \vert  \mathcal A\vert,\,  \vert \mathcal B \vert)/N)^2}, 
c_3(N/\log N)^{4\over 3}
\Bigr\}.
\end{equation}
\end{theorema}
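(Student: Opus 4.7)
The plan is to argue by contradiction: I set $\alpha := \min(|\mathcal A|, |\mathcal B|)/N$ and $y := N^{1+(1-\eps)\alpha^2}$, assume $P^+(ab+1) \leq y$ for every $(a,b) \in \mathcal A \times \mathcal B$, and try to reach a contradiction (unless we are in the regime where the fallback $c_3(N/\log N)^{4/3}$ is the binding bound).

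Since each $ab+1$ is then $y$-smooth, the inequality $\Omega(m) \geq \log m/\log y$ for $y$-smooth $m$ gives, after discarding the negligible set of pairs with $ab < N^{2-\delta}$,
\[
|\mathcal A|\,|\mathcal B|\cdot \frac{2\log N}{\log y}(1+o(1)) \;\leq\; \sum_{(a,b)\in\mathcal A\times\mathcal B} \Omega(ab+1) \;=\; \sum_{p^k}N(p^k),
\]
where $N(q):=\#\{(a,b):q\mid ab+1\}$. I would bound the right-hand side by slicing into residues,
\[
N(q)=\sum_{r\in(\Z/q\Z)^*} A_q(r)\,B_q(-r^{-1}),\qquad A_q(r):=|\{a\in\mathcal A:a\equiv r\!\!\pmod q\}|,
\]
and writing $A_q(r)=|\mathcal A|/\varphi(q)+E_{\mathcal A}(q,r)$ with the analogue for $B_q$. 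The main term $|\mathcal A||\mathcal B|/\varphi(q)$ summed against $\log p$ over $p^k\leq y$ contributes only $(1+o(1))|\mathcal A||\mathcal B|\log y$, which by itself yields the trivial bound $y\geq N^{2+o(1)}$ -- so the whole game is to extract a further saving.

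The saving I would aim for comes from two ingredients. For primes $p$ well below $y$, I would apply a sieve upper bound for the number of $(a,b)$ with $ab+1$ divisible by an unusually large product of small primes, producing a first saving proportional to the density. For the remaining larger primes, the cross-correlations $\sum_r E_{\mathcal A}(q,r)\,E_{\mathcal B}(q,-r^{-1})$ would be handled by Cauchy--Schwarz combined with the arithmetic large sieve inequality $\sum_{q\leq Q}q\sum_{(r,q)=1}E_{\mathcal A}(q,r)^2 \ll (N+Q^2)|\mathcal A|$ and its analogue for $\mathcal B$, producing a saving quadratic in $\alpha$. Balancing the two should give a total saving of $(1-\eps)\alpha^2\log N$ on the right-hand side of the main inequality, which rearranges to $\log y\geq(1+(1-\eps)\alpha^2)\log N$. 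Once $\alpha$ drops to the critical density $(\log\log N/\log N)^{1/2}$, the quadratic saving degenerates below the sieve errors, and one instead falls back on an elementary multiplication-table / divisor-function bound of Erd\H{o}s--Pomerance type, which saturates at the exponent $4/3$ and gives the second alternative.

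The hardest step is to calibrate the sieve thresholds and large-sieve parameters so that the combined errors stay below $\alpha^2\log N$ throughout the full density range, and so that the transition from the $\alpha^2$-regime to the $4/3$-regime is seamless; this requires a sharp form of the large sieve uniform in moduli up to $y$, together with a careful treatment of exceptional residues where the discrepancies $E_{\mathcal A}(q,r)$ and $E_{\mathcal B}(q,r)$ could be abnormally large.
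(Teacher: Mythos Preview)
This statement (Theorem~A) is not proved in the present paper at all: it is quoted, with attribution, from Stewart \cite[Theorem~2]{St} as the benchmark the paper sets out to improve. There is therefore no ``paper's own proof'' to compare your sketch against. The paper's own contributions are Theorems~\ref{L--V}, \ref{firstgeneral} and \ref{secondgeneral}, and it is only for these that proofs are supplied.

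That said, a brief comment on your sketch. The overall strategy --- the Chebyshev--Hooley method, bounding $\log\prod_{a,b}(ab+1)$ from below trivially and from above by splitting the prime range --- is indeed the method Stewart uses, and it is also the method the present paper uses in \S7 for Theorem~\ref{secondgeneral}. However, your write-up mixes two different versions of it: you begin with the unweighted count $\sum\Omega(ab+1)=\sum_{p^k}N(p^k)$, but then speak of the main term ``summed against $\log p$'' contributing $|\mathcal A||\mathcal B|\log y$. Without the $\log p$ weight the main term is $\sum_{p^k\leqslant y}|\mathcal A||\mathcal B|/\varphi(p^k)\asymp|\mathcal A||\mathcal B|\log\log y$, not $|\mathcal A||\mathcal B|\log y$; with the weight you are no longer computing $\sum\Omega$. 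In the actual Stewart argument (and in \S7 here) one works with $\log E=\sum_p v_p(E)\log p$ throughout, splits at $p=N$, and controls the small-prime piece by the large-sieve identity recorded here as Lemma~\ref{squareoferrors}. Finally, the $4/3$ alternative in Stewart's theorem does not come from an Erd\H{o}s--Pomerance divisor bound; it arises from the level-of-distribution barrier $Q\leqslant x^{1/2}$ in the Bombieri--Vinogradov theorem when one estimates the large-prime contribution $\log E_2$ via primes in arithmetic progressions.
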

It is important to note that, in the particular case where $\mathcal A$ and $\mathcal B$ are dense subsets of $[1,N]$, (which means that they satisfy $ \vert  \mathcal A\vert,\ \vert \mathcal B \vert\geqslant  \delta N$, for some fixed positive $\delta$ and $N\rightarrow +\infty$),   we then  have  
$\varGamma^+ ( {\mathcal A}, {\mathcal B}, N)\gg_{\delta} N^{1+\delta_1}$, where $\delta_1$ is a positive function of $\delta$. However the relation \eqref{1jan1} never produces a lower bound better than $N^{ 4\over 3}$.

Actually, much more is conjectured since in  \cite[Conj.1]{SaSt}, the authors propose the following 
\begin{conjecture}\label{21dec1} For every $\eps $ satisfying $0<\eps <1$, there exists $N(\eps)$ and $C(\eps ) >0$, such that, for every integer  $N\geqslant  N(\eps)$, for every $\mathcal A$ and  $\mathcal B \subset [1,\dots, N]$ satisfying
\begin{equation}\label{strict}
 \vert  \mathcal A\vert,\ \vert \mathcal B \vert> \eps N,
\end{equation}
we have the inequality 
\begin{equation}\label{14jan1}
 \varGamma^+ ( {\mathcal A}, {\mathcal B}, N) \geqslant  C(\eps ) N^2.
 \end{equation}
\end{conjecture}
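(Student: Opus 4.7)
The plan is to note first that since $ab+1 \leq N^2 + 1$, the inequality $P^+(ab+1) \geq C(\eps)N^2$ essentially asserts that $ab+1 = mp$ for a bounded integer $m \leq 1/C(\eps)$ and a prime $p \asymp N^2$. So Conjecture~\ref{21dec1} is morally equivalent to producing an \emph{almost-prime} in the set $\mathcal C(\mathcal A,\mathcal B)$, and I would attack it by studying the weighted count
\[
T(\mathcal A,\mathcal B) := \sum_{a\in\mathcal A}\sum_{b\in\mathcal B}\Lambda(ab+1),
\]
aiming to show $T(\mathcal A,\mathcal B)>0$. Heuristically, Bateman--Horn applied to $xy+1$ predicts $T(\mathcal A,\mathcal B)\sim \kappa\,|\mathcal A|\,|\mathcal B|$ for a positive singular series $\kappa=\kappa(\mathcal A,\mathcal B)$ that records the joint distribution of $\mathcal A$ and $\mathcal B$ modulo small primes.

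To make this rigorous I would expand $\Lambda$ by a combinatorial identity (Vaughan or Heath--Brown) and split the resulting sums into Type~I and Type~II contributions. The Type~I terms reduce to counting pairs $(a,b)\in\mathcal A\times\mathcal B$ with $d\mid ab+1$ for $d$ in some initial segment, i.e., to counting lattice points of $\mathcal A\times\mathcal B$ in the hyperbolic residue classes $ab\equiv -1\pmod{d}$. The Type~II terms demand uniform control, over $\mathcal A$ and $\mathcal B$, of oscillating sums $\sum_a\sum_b \mathbf{1}_{\mathcal A}(a)\,\mathbf{1}_{\mathcal B}(b)\,\chi(ab+1)$ for primitive Dirichlet characters $\chi$, which one would attempt via the multiplicative large sieve for bilinear forms.

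The main obstacle is precisely that $\mathcal A$ and $\mathcal B$ are \emph{arbitrary} dense subsets of $[1,N]$, with no regularity whatsoever in their distribution in residue classes: for instance $\mathcal A$ can be engineered to lie in a single residue class modulo some small $q$, which does not invalidate the conjecture but destroys any naive singular-series formula. More seriously, even after enforcing the necessary local conditions, the level of distribution of $\Lambda$ over the required progressions, uniformly in $\mathcal A$ and $\mathcal B$, barely reaches $N^{1/2}$ by classical Bombieri--Vinogradov, whereas detecting primes at scale $N^2$ from two scale-$N$ variables demands level of distribution close to $N$. For these reasons Conjecture~\ref{21dec1} appears to lie well beyond current techniques: a successful attack would presumably combine a Green--Tao-style regularisation of $\mathcal A$ and $\mathcal B$ (exploiting density to pass to structured subsets) with a bilinear-form estimate for $\Lambda(ab+1)$ pushing the level of distribution past $N$, in the spirit of Bombieri--Friedlander--Iwaniec. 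Failing either ingredient, one can only hope for partial progress on the lower bound for $\varGamma^+$, which is exactly the direction pursued in the present paper.
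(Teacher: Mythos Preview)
The statement you were asked to address is a \emph{conjecture}, not a theorem: the paper does not prove it, and indeed explicitly presents it as an open problem due to S\'ark\"ozy and Stewart. Your proposal correctly recognises this and, rather than offering a proof, gives a heuristic discussion of why the conjecture is out of reach. In that sense your assessment is accurate and aligns with the paper's own stance.

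A few remarks on the substance of your discussion. Your reduction is right: since $ab+1\leqslant N^2+1$, the bound $P^+(ab+1)\geqslant C(\eps)N^2$ forces $ab+1=mp$ with $m\leqslant 1/C(\eps)+O(1)$, so the conjecture is indeed an almost-prime statement for $\mathcal C(\mathcal A,\mathcal B)$. However, the sum $T(\mathcal A,\mathcal B)=\sum_{a,b}\Lambda(ab+1)$ is aimed at detecting genuine primes, which is strictly stronger than what the conjecture demands; a sieve-weighted version (\`a la Chen) targeting $P_2$-numbers would be closer to the mark. Your identification of the core obstruction is sound: the natural modulus in the problem is $b\sim N$ while the height is $ab+1\sim N^2$, so one needs level of distribution $Q\sim x^{1/2}$ with \emph{absolute values} on the error terms and with $b$ ranging over an arbitrary set---precisely the regime where Bombieri--Vinogradov gives nothing and where the paper invokes the Bombieri--Friedlander--Iwaniec machinery (Proposition~\ref{Q=X^1/2} and Theorem~\ref{extension}) to make partial progress. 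The paper's actual results (Theorems~\ref{L--V}, \ref{firstgeneral}, \ref{secondgeneral}) confirm your closing sentence: they push the exponent in $\varGamma^+\geqslant N^{1+\theta}$ as close to $2$ as the density of $\mathcal A$ allows, but fall short of the full conjecture for exactly the reasons you outline.
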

 Such a conjecture   becomes  false if we only impose the lower bounds $\vert \mathcal A\vert$ and $\vert \mathcal B\vert \geqslant \eps (N)\cdot N$, where $\eps (N)$ is a function of $N$ tending to $0$ as slowly as we want, when $N$ tends to infinity.  To see this,  choose $p$ a prime  satisfying $(2\eps (N))^{-1} \leqslant  p\leqslant    \eps (N)^{-1}$  and consider $\mathcal A = 
 \{ a \leqslant N\, ;\ a\equiv 1 \bmod p\}$ and 
 $\mathcal B = 
 \{ b \leqslant N\, ;\ b\equiv -1 \bmod p\}$. For such $\mathcal A$ and $\mathcal B$, we easily see that $\varGamma^+ ( {\mathcal A}, {\mathcal B}, N) \leqslant (N^2+1)/p =o(N^2)$.

 Before stating our results, we first give some  general considerations  on the set $\mathcal C$.
 \subsection{The subset of Linnik--Vinogradov}  Let 
$$
{\mathcal {LV}}(N) :=\{n\, ; n\leqslant N^2, \, n=ab\text { with } 1\leqslant a, \, b\leqslant N\}.
$$
Hence $\mathcal {LV} (N)$ is the set of (distinct) products of  two integers $\leqslant N$. The study of the cardinality of this set is not an easy task at all, this a question due to Linnik and Vinogradov.    K.Ford \cite[Corollary 3]{Ford} has now solved this problem by proving     \begin{equation}\label{encadrement}
\vert \, \mathcal {LV} (N)\, \vert  \asymp {N^2\over (\log N)^{c_4} (\log \log N)^{3\over 2}},
\end{equation}
where $c_4$ has the value     $ c_4= 1 - {1+\log \log 2\over \log 2} =0.086\, 07 \dots$. (for a slightly weaker result see \cite[Theorem 23]{HaTe}).
The relation \eqref{encadrement} shows that $\mathcal{LV} (N)$ is  a sparse subset of $[1,N^2]$, but only by a tiny power of $\log N$.

Hence,     for any $\mathcal A$ and $\mathcal B$, we have the trivial relation
$$
 {\mathcal C}({\mathcal A}, {\mathcal B}) \subset \mathcal {LV}( N)+ \{ 1\},
 $$
 which shows in which sparse subset of $[1,N^2+1]$,  the set $\mathcal C$ lives obligatorily.
 
 To complete the description of the scenery of our problem,  we recall 
 the basic properties of the classical function $\Psi (x,y)$, which counts the integers less than $x$ with all their prime factors less than $y$. In other words, we define
 $$
 S(x,y):=\{ n\leqslant x\, ;\ P^+( n) \leqslant y\}
 $$
 and 
 $$
 \Psi (x,y):=\vert S(x,y) \vert.
 $$
 We only appeal to the rather easy result
 $$
 \Psi (x,y) =x \rho \Bigl( {\log x\over \log y}\Bigr) + O \Bigl( {x\over \log y}\Bigr),
 $$
 uniformly for $x\geqslant  y\geqslant 2$ (see \cite[Th\' eor\`eme 6, p.371]{Te}, for instance). Here $\rho$ is the Dickman function (see \cite[p.370]{Te}), This function quickly goes to zero, since it satisfies
 $$
 \rho (u) \leqslant 1/\Gamma (u+1), \ (u>0).
 $$
 
 Using the above formula, the Stirling formula, and the inclusion--exclusion principle, we see that
 $$
\Bigl( \mathcal {LV}(N)+\{1\}\Bigr) \cap \Bigl( [1,\cdots, N^2+1]\setminus S(N^2+1, y)\Bigr) \not= \emptyset,
 $$
 as soon as $N$ is sufficiently large and $y$ satisfies
 $$
 y\geqslant \exp \Bigl( c_5{\log N\, \log \log \log N\over \log \log N      }\Bigr),
 $$ 
 where $c_5$ is some absolute positive  constant.   This means that, with a naive approach, we proved that the shifted Linnik--Vinogradov set
 ${\mathcal {LV}}(N)+\{1\}$, contains an element divisible by a prime \begin{equation}\label{poorresult}
 p> N^{c_5{\log \log \log N\over \log \log N  }}. 
 \end{equation} 
 
 We    now state our results. They correspond to three different situations, which appear to be more and more difficult.
  We can already feel the depth   of   Conjecture \ref{21dec1} in the very particular case $ \mathcal A =\mathcal B= [1,\dots, N]$ (this corresponds to the condition \eqref{strict} with $\eps =1$, and $>$ replaced by $\geqslant $).  This very particular situation will be the object of Theorem \ref{L--V}.

\subsection{ The case $\mathcal A =\mathcal B =[1,\dots,N]$}
 Our first step will be to prove
 \begin{theorem}\label{L--V}
 For every $A >0$, there exists $N_0= N_0(A)$,  such that, for every $N\geqslant N_0 $,  the interval $[(1-(\log N)^{-A}) N^2, N^2]$ contains a prime $p$ of the form $p=ab+1$, where $a$ and $b$ are integers satisfying $1\leqslant a, b\leqslant N$.
 
 In particular, for $N\geqslant N_0 (A)$ we have the inequality 
 $$
 {\mathnormal \Gamma}^+(\mathcal A, \mathcal B, N) \geqslant  \Bigl( 1-{1\over (\log N)^A}\Bigr) N^2,
 $$
 under the constraints $\mathcal A =\mathcal B  =[1,\dots, N].$
 \end{theorem}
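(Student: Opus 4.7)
The statement follows if we show that the counting function
\[
S := \#\bigl\{(a,b,p)\,:\, 1\leq a,b\leq N,\ p = ab+1 \in I,\ p\text{ prime}\bigr\},\quad \eta := (\log N)^{-A},\ I := [(1-\eta)N^2,\, N^2],
\]
is strictly positive for $N \geq N_0(A)$, since any contributing triple produces a prime of the required form in the required interval. A boundary check: the constraints $ab + 1 \geq (1-\eta)N^2$ together with $b \leq N$ force $a \geq (1-\eta)N - O(1/N)$, and symmetrically for $b$, so both factors lie in the thin strip $[(1-\eta)N,\,N]$. Inverting the order of summation gives
\[
S = \sum_{(1-\eta)N \leq a \leq N}\Bigl(\pi\bigl(aN+1;\,a,1\bigr) - \pi\bigl((1-\eta)N^2-1;\,a,1\bigr)\Bigr),
\]
the inner difference being a count of primes $\equiv 1 \pmod a$ in an interval of length $(a-(1-\eta)N)N + O(1)$.

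Heuristically replacing each $\pi(y;a,1)$ by its expected value $\mathrm{li}(y)/\phi(a)$ and using $\phi(a) \asymp a \asymp N$ on the strip, the main term of $S$ comes out to be
\[
S_{\rm main} \asymp \frac{1}{\log N}\sum_{u=0}^{\eta N}(\eta N - u) \asymp \frac{(\eta N)^2}{\log N} = \frac{N^2}{(\log N)^{2A+1}}.
\]
To make this rigorous I would invoke a Bombieri--Vinogradov-type estimate for the fixed residue $1$, valid up to moduli of size $\sqrt x = N$:
\[
\sum_{a \leq N} \max_y \Bigl|\pi(y;\,a,1) - \frac{\mathrm{li}(y)}{\phi(a)}\Bigr| \ll_B \frac{N^2}{(\log N)^B}\quad\text{for every } B>0.
\]
Taking $B=2A+2$ makes the summed error term $o(S_{\rm main})$, so $S \geq S_{\rm main}/2 > 0$, and any contributing triple $(a,b,p)$ gives the desired prime---the condition $b=(p-1)/a \leq N$ being automatic from the strip constraint.

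The substantive step in this plan is the appeal to a BV-type bound at the critical level $Q=\sqrt x$. The classical Bombieri--Vinogradov theorem only reaches $Q \leq \sqrt x/(\log x)^C$, and therefore misses the thin strip $[(1-\eta)N,\,N]$ by a $\log$ factor; to close this gap I would use an extension of Bombieri--Vinogradov for a fixed residue class that pushes the level of distribution past $\sqrt x$, due to Fouvry or to Bombieri--Friedlander--Iwaniec. Everything else---the localization of $(a,b)$ to the strip, the main-term calculation, and the verification that $b \leq N$---is routine, and the precise quantitative shape $(1-(\log N)^{-A})N^2$ of the interval in the conclusion reflects exactly the quality of the BV-type error term.
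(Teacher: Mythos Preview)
Your strategy is essentially the paper's, but the estimate you display as the ``substantive step'' is not a known theorem, and this is a genuine gap. At the level $Q=N\sim\sqrt{x}$, no result of the form
\[
\sum_{a\leq N}\max_{y}\Bigl|\pi(y;a,1)-\frac{\mathrm{li}(y)}{\phi(a)}\Bigr|\ll_B\frac{N^2}{(\log N)^{B}}
\]
with arbitrary $B$ is available. The best absolute-value estimate at $Q\sim\sqrt{x}$ (Bombieri--Friedlander--Iwaniec, \cite{BFI2}, stated in the paper as Proposition~\ref{Q=X^1/2}) saves only $(\log x)^{-2}(\log\log x)^{B_1}$ over the trivial bound, which would restrict you to $A<\tfrac12$ and hence fails for the general $A>0$ claimed.

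What does work, and what the paper uses, is the estimate \emph{without} absolute values (Proposition~\ref{fouv1} in the paper, i.e.\ \cite[Corollaire~1]{Fo2} or \cite[Theorem~9]{BFI1}):
\[
\sum_{q\leq Q\atop (q,1)=1}\Bigl(\pi(x;q,1)-\frac{\pi(x)}{\varphi(q)}\Bigr)=O_A\bigl(x\,\mathcal L^{-A}\bigr),\qquad Q\leq x\,\mathcal L^{-200A-200},
\]
which does save arbitrary powers of $\log$ and reaches far past $\sqrt{x}$. But this forces two changes on your setup. First, since there is no $\max_y$, the prime interval must be independent of $a$; your upper endpoint $aN+1$ is not. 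Second, the sum must run over consecutive moduli $q\leq Q$ (one takes a difference of two such sums to isolate the strip). The paper handles both by shrinking slightly: set $Y=N(1-\tfrac12(\log N)^{-A})$, $Z_2=N^2(1-(\log N)^{-A})$, $Z_1=N^2(1-\tfrac12(\log N)^{-A})$, and count
\[
S=\sum_{Y\leq a\leq N}\bigl(\pi(Z_1;a,1)-\pi(Z_2;a,1)\bigr).
\]
Now the prime range $[Z_2,Z_1]$ is fixed, Proposition~\ref{fouv1} applies directly (as the difference of two sums over $q\leq N$ and $q<Y$), and the constraint $b=(p-1)/a\leq N$ is automatic from $p\leq Z_1$ and $a\geq Y$. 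Your main-term calculation then goes through unchanged.
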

 Such a result has to be compared with the weak result given in \eqref{poorresult} and it  is far from being trivial by the tools which will be involved.   Theorem \ref{L--V} implies that 
 the inequality \eqref{14jan1} of Conjecture  \ref{21dec1} is true for any $C(\eps) <1$, in the particular case  $\mathcal A =\mathcal B  =[1,\dots, N].$  Its proof will be given in \S \ref{Thm1}, one of its qualities is to give a first idea of the difficulty of  the proof of  Conjecture \ref{21dec1}, if  
 such a proof exists. In our proof, we shall appeal to   the Siegel--Walfisz Theorem concerning primes in arithmetic progressions. This fact prevents to  produce an effective value for  $N_0(A)$, above. The same remark applies to Theorems  \ref{firstgeneral} and  \ref{secondgeneral}   below.
 
 A very delicate question is to find the asymptotic expansion  of the cardinality of the set of primes belonging 
 to ${\mathcal {LV}} (N)+\{ 1\}$. This question was treated in    \cite[Corollary 3]{InTi},  \cite{Ford} and finally in \cite[Corollary 1.1]{Kou} which gives the asymptotic order of magnitude of this cardinality.
  \subsection{ The case $\mathcal A=[1,\dots, N]$  and ${\mathcal B}$ general}  This is the second step in our graduation of difficulty. In \S \ref{Thm2}  we shall prove  
 
  \begin{theorem}\label{firstgeneral} 
  Let  $\delta$ satisfying $0<\delta <1$. 
  There exist  an absolute constant  $c_6$, independent of $\delta$,  and  a constant  $c_7=c_7 (\delta)$,  such that, for any $N\geqslant c_7$, for any subset $ \mathcal B$ of $[1,\dots, N]$ satisfying the inequality
 \begin{equation}\label{7jan1}
  \sum_{b\in \mathcal B \atop (1-\delta ) N < b \leqslant N} 1 \geqslant   {N\over \log^2 N} \cdot (\log \log N)^{c_6},
\end{equation}
  there is a prime $p$ in the interval $ ](1-2\delta) N^2  , (1-\delta) N^2  ]$ of the form $p=ab+1$ with $a$ and $b \leqslant N$ and $b\in \mathcal B$. 
  
   In particular, if $\mathcal A =[1,\dots , N]$ and if  $\mathcal B$ satisfies  
  \eqref{7jan1}, we have 
  $$
 {\mathnormal \Gamma}^+(\mathcal A, \mathcal B, N) \geqslant  (1-2\delta) \cdot N^2,
 $$
 for any sufficiently large $N$.
    \end{theorem}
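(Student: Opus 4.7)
The plan is to restrict attention to the subset $\mathcal{B}' := \mathcal{B} \cap ((1-\delta)N, N]$, whose cardinality, by hypothesis \eqref{7jan1}, satisfies $|\mathcal{B}'| \geq N(\log\log N)^{c_6}/\log^2 N$. The key reduction is that any prime $p$ lying in the target interval $I := ((1-2\delta)N^2, (1-\delta)N^2]$ and satisfying $p \equiv 1 \pmod{b}$ for some $b \in \mathcal{B}'$ automatically gives a representation $p = ab+1$ with $a = (p-1)/b$ forced to satisfy $1 \leq a \leq N$: the upper bound comes from $p \leq (1-\delta)N^2$ and $b > (1-\delta)N$, while the lower bound comes from $p > (1-2\delta)N^2 > b$, valid for $\delta < 1/2$ and $N$ large enough (the case $\delta \geq 1/2$ renders the conclusion $\Gamma^+(\mathcal{A},\mathcal{B},N) \geq (1-2\delta)N^2$ vacuous). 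So it suffices to exhibit a single such pair.

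To do this, I would study the counting sum
\[
S := \sum_{b \in \mathcal{B}'} \bigl( \pi((1-\delta)N^2; b, 1) - \pi((1-2\delta)N^2; b, 1) \bigr)
\]
and show $S > 0$ for $N$ large enough. By the Bombieri--Vinogradov theorem, this sum equals
\[
\sum_{b \in \mathcal{B}'} \frac{\mathrm{li}((1-\delta)N^2) - \mathrm{li}((1-2\delta)N^2)}{\phi(b)} + O_A\!\Bigl( \frac{N^2}{\log^A N} \Bigr)
\]
for any fixed $A > 0$. The $\mathrm{li}$-difference is at least $\delta N^2/(4 \log N)$, while the trivial estimate $\sum_{b \in \mathcal{B}'} 1/\phi(b) \geq \sum_{b \in \mathcal{B}'} 1/b \geq |\mathcal{B}'|/N \geq (\log\log N)^{c_6}/\log^2 N$ holds; hence the main term is of order at least $\delta N^2 (\log\log N)^{c_6}/\log^3 N$. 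Choosing $A = 4$, the error $O(N^2/\log^4 N)$ is absorbed by the main term for $N$ sufficiently large, which gives $S > 0$ and the required prime.

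The principal obstacle is applying Bombieri--Vinogradov at the critical scale: our moduli $b$ lie in $((1-\delta)N, N]$, i.e., at size $x^{1/2}$ with $x = N^2$, whereas the classical statement of the theorem only handles moduli up to $x^{1/2}/\log^B x$. Overcoming this requires either a refined version of BV valid for $q \leq x^{1/2}$ (with a bounded additional loss in the power of $\log$, derivable from the large sieve inequality), or a decomposition of $\mathcal{B}'$ combined with auxiliary tools such as the Brun--Titchmarsh inequality or a Barban--Davenport--Halberstam variance estimate to control the residual range. The specific constant $c_6$ appearing in \eqref{7jan1} is presumably dictated by the logarithmic losses incurred in such a refinement.
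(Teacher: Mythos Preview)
Your overall strategy matches the paper's exactly: restrict to $\mathcal{B}' = \mathcal{B}\cap((1-\delta)N,N]$, observe that $p\equiv 1\pmod b$ with $p\in I$ and $b\in\mathcal{B}'$ forces $a=(p-1)/b\leqslant N$, and show the counting sum $S$ is positive. The paper carries out precisely this computation.

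The gap is the one you flag yourself, but your proposed fix does not work. The moduli $b$ sit genuinely at level $Q\asymp x^{1/2}$ (with $x\asymp N^2$), and the classical Bombieri--Vinogradov theorem gives nothing there. Your suggestion that a refined version ``derivable from the large sieve inequality'' handles $q\leqslant x^{1/2}$ with only a bounded power-of-log loss is incorrect: the large sieve alone cannot cross $x^{1/2}/\mathcal{L}^B$. What the paper actually invokes is the Main Theorem of Bombieri--Friedlander--Iwaniec \cite{BFI2} (stated here as Proposition~\ref{Q=X^1/2}, in its $\pi$-form \eqref{2jan1}), whose proof relies on the dispersion method and bounds for sums of Kloosterman sums, not just the large sieve. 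That result gives, for $Q^2\leqslant xy$,
\[
\sum_{q\sim Q,\ (q,a)=1}\Bigl|\pi(x;q,a)-\frac{\pi(x)}{\varphi(q)}\Bigr|\ll \frac{x}{\log x}\Bigl(\frac{\log y}{\log x}\Bigr)^2(\log\log x)^{B_1},
\]
so at $Q=x^{1/2}$ (i.e.\ $y$ bounded) the saving over the trivial bound is only $(\log x)^{-2}(\log\log x)^{B_1}$, not an arbitrary power of $\log x$. This is exactly why the hypothesis \eqref{7jan1} carries the factor $(\log\log N)^{c_6}$: the paper's error term is $O\bigl(N^2(\log\log N)^{B_1}/\log^3 N\bigr)$, the main term is $\gg \delta\,|\mathcal{B}'|\,N/\log N$, and one takes $c_6=B_1+1$. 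Your computation with error $O_A(N^2/\log^A N)$ would make the $(\log\log N)^{c_6}$ in the hypothesis unnecessary, which should have signalled that the input was too strong.
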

   The condition \eqref{7jan1} is not artificial at all for the following reason:  the  order of magnitude of the prime number $p$ is almost $N^2$, hence, in the equality $p=ab+1$, both $a$ and $b$ must be  close to $N$. This implies that  the set $\mathcal B$ must contain many elements in the neighborhood of $N$.

 
 \subsection {The case $\mathcal A$ and $\mathcal B$ general} In the more general situation, we shall prove 
 
  \begin{theorem}\label{secondgeneral}
  For every    real number $0< \delta <1$, there exists  a function $\varpi_\delta\, :\  \N \to \R^*$ tending to zero at infinity, such that, for every $N\geqslant 2$,  for every subsets $\mathcal A$ and $\mathcal B$ of $[1,\dots,N]$, satisfying
  \begin{equation}\label{lowerbound}
 \vert  \mathcal A \vert \geqslant   \vert \mathcal B \vert \geqslant {N\over (\log  N)^{\delta}},
\end{equation}
  the following inequality 
   holds  
\begin{equation}\label{12:39}
\varGamma^+(\mathcal A, \mathcal B, N) \geqslant 
 N^{ 1+   ( \vert \mathcal A \vert / N )(1-\varpi_\delta (N))}.
 \end{equation}

   \end{theorem}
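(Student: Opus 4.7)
My strategy is a proof by contradiction. Suppose that $P^+(ab+1) \le y := N^{1+\alpha(1-\varpi_\delta(N))}$ for every $(a,b)\in\mathcal A\times\mathcal B$, where $\alpha := |\mathcal A|/N$ and $\varpi_\delta(N)\to 0$ is to be determined. Since $y\ge N$ and $ab+1\le N^2<y^2$, each $ab+1$ has at most one prime factor exceeding $y$, so the hypothesis is equivalent to the assertion that no prime $p\in(y,N^2]$ admits a representation $p=ab+1$ with $a\in\mathcal A,\ b\in\mathcal B$. It therefore suffices to exhibit one such prime.

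Inspired by the proof of Theorem~\ref{firstgeneral}, I would count
\[
\mathcal N(I) := \#\bigl\{(a,b)\in \mathcal A\times\mathcal B : ab+1\in I,\ ab+1\text{ prime}\bigr\}
\]
on a dyadic interval $I\subset(y,N^2]$, aiming to show $\mathcal N(I)>0$. For each $a\in\mathcal A$, the primes $p\equiv 1\pmod a$ in $I$ correspond to the cofactors $b=(p-1)/a$, and the cutoff $ab+1\in I$ is automatic by design. Since $a\le N=(N^2)^{1/2}$, the Bombieri--Vinogradov theorem controls on average the error of the prime-counting functions in arithmetic progressions for moduli up to $N/(\log N)^{B}$. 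The heuristic main term of $\mathcal N(I)$ has size
\[
\frac{|I|}{\log N^2}\sum_{a\in\mathcal A}\frac{1}{\phi(a)}\cdot\frac{|\mathcal B|}{N}\ \asymp\ \frac{|I|\,|\mathcal A|\,|\mathcal B|}{N^2\log N},
\]
which under \eqref{lowerbound} is of order at least $|I|(\log N)^{-2\delta-1}$.

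The principal technical obstacle is the installation of the restriction $(p-1)/a\in\mathcal B$, since $\mathcal B$ carries no arithmetic regularity. I would carry this out by a dispersion (Linnik--Friedlander) argument: applying Cauchy--Schwarz in the $a$ variable reduces the problem to a variance bound for
\[
M(p) := \#\{(a,b)\in\mathcal A\times\mathcal B : p\mid ab+1\},\qquad p\in I \text{ prime},
\]
where heuristically $M(p)\approx|\mathcal A||\mathcal B|/p$. The condition $p>N$ forces $b\equiv -a^{-1}\pmod p$ to determine $b$ uniquely in $[1,N]$, and the quadratic form $\sum_p(M(p)-|\mathcal A||\mathcal B|/p)^2$ is ultimately bounded by a multiplicative-energy quantity on $\mathcal A\times\mathcal B$ combined with a standard large-sieve contribution. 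Calibrating $\varpi_\delta(N)$ to decay slowly (say as a negative power of $\log\log N$) ensures that the main term dominates the dispersion error, yielding the contradictory prime $p\in(y,N^2]$. The careful balancing of polylogarithmic losses from Bombieri--Vinogradov and the dispersion against the density hypothesis \eqref{lowerbound} is the technical heart of the argument.
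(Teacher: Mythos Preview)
Your plan diverges from the paper's and, as written, has real gaps.

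The paper does \emph{not} try to exhibit a prime $p=ab+1$ with $(a,b)\in\mathcal A\times\mathcal B$. It runs the Chebyshev--Hooley method: with $E=\prod_{(a,b)}(ab+1)$ one has $\log E\ge(2-o_\delta(1))AB\log N$, and a large-sieve lemma on $\sum_{h}r(\mathcal U,h,p^k)^2$ gives $\log E_1:=\sum_{p\le N}v_p(E)\log p\le(1+o_\delta(1))AB\log N$. Hence $\log E_2:=\sum_{N<p\le P}v_p(E)\log p\ge(1-o_\delta(1))AB\log N$. For the matching upper bound on $\log E_2$ the paper \emph{drops} the constraint $a\in\mathcal A$, majorising by $\sum_{b\in\mathcal B}\sum_{N<p\le P,\ pm\equiv 1\bmod b,\ pm\le N^2}\log p$, and evaluates this via Theorem~\ref{extension} (the paper's main new technical input, an extension of the Bombieri--Friedlander--Iwaniec level-$x^{1/2}$ theorem to the convolution $\Lambda\ast\mathbf 1$). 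Comparing the two bounds for $\log E_2$ yields $\log(P/N)\ge(1-o_\delta(1))(A/N)\log N$.

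Your proposal has three concrete problems. First, your ``equivalent'' is not an equivalence: the negation of the theorem is that no prime $p>y$ \emph{divides} any $ab+1$, which is strictly stronger than ``no prime $p>y$ \emph{equals} some $ab+1$''; you are thus attempting to prove more than required, namely the existence of a prime of the exact shape $ab+1$ with both $a\in\mathcal A$ and $b\in\mathcal B$. Second, Bombieri--Vinogradov only reaches moduli $\le N(\log N)^{-B}$, not $N$; the paper needs the full strength of Theorem~\ref{extension} to reach level $N$, and even that saves merely a factor $(\log\log N)^{B_2}/\log N$, which is why the hypothesis $\delta<1$ in \eqref{lowerbound} is essential. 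Your sketch invokes no tool of that depth. Third, the dispersion step is not set up coherently: you pass from $\mathcal N(I)$, which counts $p=ab+1$, to the variance of $M(p)=\#\{(a,b):p\mid ab+1\}$, a different object; and for $p$ near the top of $I$ the heuristic mean $|\mathcal A||\mathcal B|/p\asymp(\log N)^{-2\delta}<1$, so concentration of $M(p)$ around its mean does not produce a single representation. The paper's decisive simplification---relaxing $a\in\mathcal A$ to $a\le N$ and absorbing the loss through the Chebyshev--Hooley inequalities---is exactly what circumvents the bilinear obstacle you are running into.
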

    It is possible to describe      the function $\varpi_\delta$ more precisely, but this  description will depend on non explicit constants  (see the comment after Theorem \ref{L--V}).
  Compared with \eqref{1jan1}, we see two advantages in Theorem \ref{secondgeneral}. When $\mathcal A$ is more and more dense, the exponent of $N$ in \eqref{12:39} tends to $2$. This gives some consistency to Conjecture \ref{21dec1}. 
  In the other direction, if $\mathcal A$ and $\mathcal B$ satisfy $\vert \mathcal A\vert \sim \vert  \mathcal B \vert \sim N/(\log N)^\delta$ with    $0<\delta <1/2$,  we obtain the lower bound
  $$
  \varGamma^+(\mathcal A, \mathcal B, N) \geqslant N^{1+(1-\eps) (\log N)^{-\delta}      },
  $$
 for $N> N_0 (\eps)$.  By \eqref{1jan1}, we would have have the same lower bound but with $\delta$ replaced s by $ 2\delta $, thus Theorem \ref{secondgeneral} represents a valuable improvement for sparse sequences.
 
 Actually, after a talk given by the  author at the Congress   {\it Activit\' es Additives et Analytiques}  (Lille, june  2009) where  he exposed the results of the present paper, C. Elsholtz kindly turned our attention on a preprint of K. Matom\" aki on the same subject. This work is now published (\cite{Ka}) from which we extract the following central result
 \begin{theorema}\label{kato}(\cite[Theorem 2]{Ka}
 Let $C_0$ and  $c_1$ be positive. Then for every $c_2$ satisfying
 $$
 c_2 < {1-4c_1 -{2\over C_0}\over 4},
 $$
 there exists $N_0 (c_2)$ such that, for every $N\geqslant N_0 (c_2)$, for every $\mathcal A$ and $\mathcal B\subset [1,\dots, N]$, satisfying
 $$
 \vert {\mathcal A}\vert \geqslant C_0{ N\over \log N} \ \text{ and} \ 
 \vert {\mathcal B}\vert \geqslant {\vert {\mathcal A}\vert \over N^{c_1 \vert {\mathcal A}\vert /N}},
 $$
 we have 
 $$ \varGamma^+ ( {\mathcal A}, {\mathcal B}, N) \geqslant N^{\sqrt{1 +c_2 \vert {\mathcal A} \vert/N}}. 
 $$
 \end{theorema}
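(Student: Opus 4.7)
I would argue by contradiction. Suppose $\varGamma^+(\mathcal A, \mathcal B, N) < Q := N^\beta$ with $\beta < \sqrt{1+c_2 \alpha}$, where $\alpha := |\mathcal A|/N$. Then every $ab+1$, with $(a,b) \in \mathcal A \times \mathcal B$, is $Q$-smooth, and the goal is a contradiction coming from the comparison of upper and lower bounds on a weighted sum over primes dividing $ab+1$.

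The first step is a dyadic pigeonhole on $\mathcal A$ and $\mathcal B$, passing to subsets on which $a \asymp A$ and $b \asymp B$ with $AB \geq N^\beta$; the density hypotheses $|\mathcal A| \geq C_0 N /\log N$ and $|\mathcal B| \geq |\mathcal A|/N^{c_1\alpha}$ ensure that one loses only factors of $\log N$ and, crucially, that the restricted subsets are still dense in their respective dyadic intervals. The central object is
\begin{equation*}
T := \sum_{a}\sum_{b} \log(ab+1) = \sum_{p \leq Q,\ k \geq 1}(\log p)\, N(p^k),
\end{equation*}
where $N(d) := \#\{(a, b) \in \mathcal A \times \mathcal B : d \mid ab+1\}$; after the dyadic restriction one has $T \gg |\mathcal A||\mathcal B|\log N$.

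To bound $T$ from above I split the outer sum into a small-prime range $p \leq A$ and a large-prime range $A < p \leq Q$. For small primes, the bound $N(p^k) \leq |\mathcal A||\mathcal B|/p^k + |\mathcal A|$ combined with Mertens' theorem contributes $O(|\mathcal A||\mathcal B|\log N + |\mathcal A| A)$. For $p > A$, the relation $p \mid ab+1$ forces $ab+1 = mp$ with $m \leq AB/p$, whence $N(p) \leq \sum_{m \leq AB/p} d(mp-1)$; Titchmarsh's divisor bound, summed over $p \in (A, Q]$, contributes an amount of order $AB (\log N)\log(Q/A)$.

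Putting the estimates together and optimising in the dyadic parameter $A$ would yield a lower bound on $\beta - 1$. A crude accounting gives only the linear bound $\beta - 1 \gg \alpha$, substantially weaker than the claimed $\beta^2 \geq 1 + c_2\alpha$. The principal obstacle, and novelty of \cite{Ka}, is the square-root improvement: one refines the large-prime contribution by a Cauchy--Schwarz (or second-moment) argument on $N(p)$, exploiting the bilinear structure of the form $ab+1$ and the fact that in $ab+1 = mp$ both $a$ and $b$ are constrained to specific dense sets. The constants in the admissibility condition $c_2<(1-4c_1-2/C_0)/4$ trace directly back to the losses at each step: the denominator $4$ from the Cauchy--Schwarz step, $4c_1$ from the sparsity of $\mathcal B$, and $2/C_0$ from the density hypothesis on $\mathcal A$.
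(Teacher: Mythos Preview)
This statement is not proved in the present paper. Theorem~\ref{kato} is quoted verbatim from Matom\"aki~\cite[Theorem~2]{Ka} as background and for comparison with the author's own Theorems~\ref{L--V}--\ref{secondgeneral}; the paper offers no proof, sketch, or outline for it. There is therefore nothing in the paper to compare your proposal against.

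That said, a brief comment on your sketch as a standalone attempt: the overall Chebyshev--Hooley framework (lower bound for $\sum\log(ab+1)$ versus an upper bound obtained by summing over prime powers) is indeed the engine behind results of this type, and is the method used in \cite{SaSt}, \cite{St}, and in the present paper for Theorem~\ref{secondgeneral}. However, your account of the ``square-root improvement'' is vague at precisely the point where the argument would have to be made precise. You write that one ``refines the large-prime contribution by a Cauchy--Schwarz (or second-moment) argument on $N(p)$,'' but you do not say to which sum Cauchy--Schwarz is applied, what the two factors are, or how the resulting second moment is controlled; nor do you explain how the exponent $\sqrt{1+c_2\alpha}$ (as opposed to $1+c\alpha$ or $1+c\alpha^2$) actually emerges from the inequalities. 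The attribution of the constants $4$, $4c_1$, $2/C_0$ to specific steps is asserted rather than derived. As written, this is a plausible narrative rather than a proof: the decisive step is named but not carried out.
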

 When writing the proof of Theorem \ref{kato}, the author was unaware of  \cite{St}, this is the reason why  she only refers to the older and weaker result  \cite{St0}. Hence it is worth comparing the strength of Theorems A \& \ref{kato}. Theorem \ref{kato} really takes into account the situation where  $\mathcal B$  is much thinner than $\mathcal A$ (a typical situation being $\vert {\mathcal A}\vert  \asymp  N$ and $\vert {\mathcal B}\vert \asymp  \vert 
 {\mathcal A}\vert \cdot  N^{-\delta}$ with $\delta >0$). In counterpart, Theorem \ref{kato} never produces a lower bound for $\varGamma^+ ( {\mathcal A}, {\mathcal B}, N)$ better than $\varGamma^+ ( {\mathcal A}, {\mathcal B}, N)\geqslant N^{\sqrt{1+{1\over 4}}-\eps}=N^{1.\, 118\cdots}$,   instead of  $\varGamma^+ ( {\mathcal A}, {\mathcal B}, N)\geqslant N^{1.\, 333\cdots}$ by Theorem A.
 
 It is also  worth noticing that in  \cite[\S 4] {Ka} the author expresses  the presentiment  of the importance of 
 the work of Bombieri, Friedlander and Iwaniec \cite{BFI2} to improve her results. The present paper confirms this intuition.  
 
 {\bf Acknowledgements.}  The author is  grateful to C. Elsholtz for letting him know the existence of \cite{Ka}. He also 
 warmly thanks C. Stewart for his stimulating conversations on the subject. 
  
 \section{Tools from analytic number theory}\label{Tools}

 \subsection{Primes in arithmetic progressions}
 In the rest of this paper, we reserve the letter $p$ to prime numbers. We shall also systematically write
 $$
 \mathcal L =\log 2x,
 $$
 where $x\geqslant 1$, is a real number we consider as tending to infinity.
 
  The proofs of   Theorems \ref{L--V},  \ref{firstgeneral} \& \ref{secondgeneral} are based on deep    properties     of the classical  function in prime number theory
 $$
 \pi (x;q,a) =\sum_{p\leqslant x\atop p\equiv a \bmod q} 1,
 $$
when $a$ and $q$ are coprime integers.  In particular,  its behavior has to be compared  with the function $\pi (x)/\varphi (q)$, where $\pi (x)$ is the cardinality of the set of primes $\leqslant x$ and $\varphi (q)$ is the Euler function of  the integer  $q$.
   We recall some properties of this counting function on average in arithmetic progressions. The most classical one is the Bombieri--Vinogradov Theorem  (see \cite{Bom0}, \cite{Vin},  \cite[Th\' eor\`eme 17]{Bom},  \cite[Theorem 17.1]{IwKo}, ... for instance)
 \begin{proposition}\label{B--V}  For every $A$, there exists $B=B(A)$ such that
\begin{equation}\label{9jan1}
 \sum_{q\leqslant Q}\  \max_{y\leqslant x}\  \max_{(a,q)=1} \ \Bigl\vert \pi( y;q,a)-{\pi (y)\over \varphi (q)}\Bigr\vert =O_A\bigl( x\, \mathcal L^{-A}\bigr),
\end{equation}
uniformly for  $Q\leqslant  x^{1\over 2}\, \mathcal L^{-B}$ and $x\geqslant 1$.
 \end{proposition}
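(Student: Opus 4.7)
The plan is to follow the classical Bombieri--Vinogradov route through character sums and the large sieve. I would first prove the analogous inequality for the weighted function
$$
\psi(y;q,a) := \sum_{\substack{n \leqslant y \\ n \equiv a \bmod q}} \Lambda(n),
$$
and then transfer it to $\pi(y;q,a)$ by partial summation, noting that the prime-power contributions $n = p^k$ with $k \geqslant 2$ total only $O(x^{1/2}\mathcal L)$ and are absorbed in the error term. Orthogonality of Dirichlet characters modulo $q$ gives
$$
\psi(y;q,a) - \frac{\psi(y)}{\varphi(q)} = \frac{1}{\varphi(q)} \sum_{\chi \neq \chi_0} \bar\chi(a)\, \psi(y,\chi),
$$
where $\psi(y,\chi) := \sum_{n \leqslant y} \chi(n)\Lambda(n)$. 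Moving the maxima over $a$ and $y$ inside the absolute values and replacing each character by the primitive character $\chi^*$ that induces it (an operation whose cost is negligible, since it only removes Euler factors at primes dividing $q$) reduces the left-hand side of \eqref{9jan1} to a sum, over primitive characters of modulus $q \leqslant Q$, of $\max_{y \leqslant x}|\psi(y,\chi^*)|$ weighted by roughly $1/\varphi(q)$.

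To estimate that sum, I would apply Vaughan's identity with parameters $U = V = x^{1/3}$ to decompose $\Lambda$ into a short-interval term (trivially negligible), Type I (linear) sums of the shape $\sum_{d} \alpha_d \sum_m \chi(dm)$ with $d$ in a prescribed range, and a Type II (bilinear) sum of the shape $\sum_{m,n} \alpha_m \beta_n \chi(mn)$ with both $m$ and $n$ of length at least $x^{1/3}$. The Type I contributions are dispatched by the P\'olya--Vinogradov inequality (or by the multiplicative large sieve applied pointwise), while the Type II contribution is controlled by Cauchy--Schwarz followed by the multiplicative large sieve inequality
$$
\sum_{q \leqslant Q} \frac{q}{\varphi(q)} \sum_{\chi^* \bmod q} \Bigl|\sum_{n \leqslant N} a_n \chi^*(n)\Bigr|^2 \ll (N + Q^2) \sum_{n \leqslant N} |a_n|^2,
$$
the inner sum being restricted to primitive characters. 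The maximum over $y \leqslant x$ is brought inside by Gallagher's smoothing device, at the cost of only an extra logarithmic factor. The hypothesis $Q \leqslant x^{1/2}\mathcal L^{-B}$ is exactly what is needed so that the $Q^2$ term from the large sieve does not overwhelm the saving coming from the main term of length $\asymp x$.

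The main obstacle lies with the range of very small moduli, say $q \leqslant \mathcal L^A$, where the large-sieve strategy cannot by itself produce the saving $\mathcal L^{-A}$ demanded on the right of \eqref{9jan1}, since for each fixed small $q$ the bound degenerates. For those moduli I would invoke the Siegel--Walfisz theorem, which provides (ineffectively in $A$) the individual bound
$$
\max_{(a,q)=1}\ \max_{y \leqslant x}\ \Bigl|\psi(y;q,a) - \frac{\psi(y)}{\varphi(q)}\Bigr| \ll_A x \mathcal L^{-2A},
$$
uniformly for $q \leqslant \mathcal L^A$, and then sum trivially over the $O(\mathcal L^A)$ such moduli to absorb them into the error. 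This ineffective step is exactly what forces the constant $B=B(A)$ to be non-explicit. Combining the two ranges and converting $\psi$ back to $\pi$ by partial summation yields \eqref{9jan1}.
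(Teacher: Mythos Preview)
Your sketch is correct and follows the standard route; note however that the paper does not give its own proof of this proposition but merely cites it as the classical Bombieri--Vinogradov theorem (references \cite{Bom0}, \cite{Vin}, \cite[Th\'eor\`eme~17]{Bom}, \cite[Theorem~17.1]{IwKo}), remarking only that the proof rests on the large sieve inequality for multiplicative characters together with a combinatorial decomposition of $\Lambda$ --- precisely the ingredients you invoke via Vaughan's identity and Siegel--Walfisz for small moduli.
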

 In many applications, this proposition replaces the Riemann Hypothesis 
 extended to Dirichlet's $L$--functions.  The best constant for the moment is $B=A+1$. But much more is conjectured: it is largely believed that \eqref{9jan1} is true for $Q=x^{1-\eps}$, for any $\eps >0$ (the $O$--constant depending now on $A$ and $\eps$). This is the content of  the Elliott--Halberstam Conjecture  (see \cite{ElHa}). The proof of the Bombieri--Vinogradov Theorem   (see \cite{IwKo}, for instance) is now presented as an elegant  and deep consequence of the large sieve inequality for multiplicative characters and of the combinatorial structure of the characteristic function of the set of primes or of the van Mangoldt function  $\Lambda (n)$ (see Lemma \ref{HeathB} below).  It is a challenge to improve the value of 
 $Q$ in \eqref{9jan1}, even by modifying the way of summing the error terms $\pi( y;q,a)-{\pi (y)\over \varphi (q)}$ or even by approaching the characteristic function of the set of primes by  the characteristic function of another set of the same, but easier, combinatorial structure. The first breakthrough 
 in that direction is  due to Fouvry and Iwaniec \cite{FoIw1} (see also \cite{FoTh})  and  it    was followed  by several papers of Bombieri, Fouvry, Friedlander and Iwaniec (\cite{Fo0}, \cite{Fo1}, \cite{FoIw2}, \cite{Fo3},   \cite {BFI1}, \cite{BFI2}, \cite{BFI3}...)  Also see \cite[\S 12 p.89--103]{Bom} for an introduction to these techniques, based on Linnik's dispersion method and on several types of bounds for  Kloosterman sums.
   
 For 
 the problem we are studying in the present paper, we shall restrict to two points of view. The first one is 
 \begin{proposition}\label{fouv1} (See \cite[Corollaire 1]{Fo2} \& \cite[Theorem 9]{BFI1})
For every non zero integer $a$,  for every $A$, we have the equality
 \begin{equation}\label{9jan2}
 \sum_{q\leqslant Q\atop (q,a)=1} \Bigl( \pi( x;q,a)-{\pi (x)\over \varphi (q)}\Bigr) =O_{a,A}\bigl( x\, \mathcal L^{-A}\bigr),
 \end{equation}
 uniformly for  $Q\leqslant  x\, \mathcal L^{- 200A -200}$ and $x\geqslant 1.$
 \end{proposition}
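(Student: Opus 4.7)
\emph{Strategy.} The plan is to bound the signed error $E(x;q,a) := \pi(x;q,a) - \pi(x)/\varphi(q)$ summed over $q$ by reducing to bilinear sums via Heath-Brown's identity (Lemma \ref{HeathB}), then handling Type I and Type II contributions separately. The decisive feature exploited throughout is that the outer sum on $q$ carries no absolute value, so cancellation in $q$ is available on top of any cancellation in the inner variables; this is what will allow us to push $Q$ up to $x/\mathcal{L}^{200A+200}$, far past the $x^{1/2}$ barrier of Bombieri--Vinogradov.

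\emph{Heath-Brown decomposition and Type I sums.} First I would expand $\Lambda(n)$ as a sum of $O(\mathcal{L}^k)$ convolutions of $2k$ sequences supported dyadically at scales $M_1,\dots,M_{2k}$ with $M_1\cdots M_{2k}\asymp x$. A dyadic decomposition reduces \eqref{9jan2} to estimating, for each configuration, the signed sum
$$
\sum_{q\leq Q,\,(q,a)=1}\Bigl(\sum_{\substack{m_1\cdots m_{2k}\equiv a\,(q)\\ m_i\sim M_i}}\prod_i\alpha_i(m_i)-\frac{1}{\varphi(q)}\sum_{\substack{(m_1\cdots m_{2k},q)=1\\ m_i\sim M_i}}\prod_i\alpha_i(m_i)\Bigr).
$$
When some sub-product of the $m_i$ is $\leq x^{1-\delta}$ and leaves the remaining factor as a smooth long variable $N$, the inner count on an arithmetic progression is $N/q+O(1)$, and the desired estimate will follow from trivial signed summation in $q$ with no oscillation.

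\emph{Type II sums and Kloosterman cancellation.} The essential difficulty is the case where every grouping of the $m_i$ yields two factors of sizes $M,N$ in a middle range $x^\delta\leq M,N\leq x^{1-\delta}$. Here I would apply Linnik's dispersion method: isolate the congruence $n\equiv a\bar m\pmod q$, square in the undistinguished variable and apply Cauchy--Schwarz, reducing the problem to counting triples $(m,n_1,n_2)$ with $m(n_1-n_2)\equiv 0\pmod q$. The diagonal $n_1=n_2$ is handled directly; the off-diagonal part, after Poisson summation in $n$, turns into incomplete Kloosterman sums of the shape
$$
\sum_{q\sim Q}\ \sum_{m\sim M}e\!\Bigl(\frac{a\,\bar m\, h}{q}\Bigr),
$$
whose size is controlled by the Weil bound and, crucially, by the Deshouillers--Iwaniec estimates for averages of Kloosterman sums over the modulus $q$. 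The main obstacle lies precisely here: one must balance the loss from Cauchy--Schwarz against the Weil/Deshouillers--Iwaniec gain, and propagate the bookkeeping through the $O(\mathcal{L}^k)$ Heath-Brown pieces; this is what forces the specific exponent $200A+200$ on the right-hand side of \eqref{9jan2}. Since this orchestration is carried out in full detail in \cite[Corollaire 1]{Fo2} and \cite[Theorem 9]{BFI1}, I would invoke the proposition as a black box rather than reproduce the technical development.
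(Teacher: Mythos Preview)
Your proposal is in agreement with the paper's treatment: the paper does not prove Proposition~\ref{fouv1} either, but quotes it directly from \cite[Corollaire~1]{Fo2} and \cite[Theorem~9]{BFI1}, adding only the remark that the key input is \emph{kloostermania}, i.e.\ the Deshouillers--Iwaniec estimates for sums of Kloosterman sums over consecutive moduli. Your sketch of the underlying machinery (Heath--Brown identity, dispersion, Poisson to Kloosterman, then Deshouillers--Iwaniec) is accurate and matches the paper's informal description, and your final decision to invoke the result as a black box is exactly what the paper does.
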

 Note that in \eqref{9jan2}, we are summing the error terms, without absolute value, on consecutive moduli $q$. Hence we benefit from oscillations of the signs of this error term. In the proof, this oscillation is exploited by {\it kloostermania} {\it i.e} by the  study of    sums of Kloosterman sums  with  consecutive denominators.     This is the heart of  the work of Deshouillers and Iwaniec \cite{DeIw}.

 To continue the presentation of our tools   we recall the classical   functions in prime number theory $\theta (x)$,  $\psi  (x)$ and $\psi (x;q,a)$ given by
 $$
 \theta (x) =\sum_{p\leqslant x} \log p,\ \psi (x) =\sum_{n\leqslant x} \Lambda (n)\text{ and } \psi(x; q,a) =\sum_{n\leqslant  x\atop n\equiv a \bmod q} \Lambda (n).
 $$
We also introduce the notation 
\begin{equation}\label{defsim}
q\sim Q 
\end{equation}
  to mean that $q$ satisfies the inequalities $Q\leqslant q< 2Q$.  
  
  In  the second variation in the thema of
  Proposition \ref{B--V} we sum the   error terms with absolute values giving 
  
   \begin{proposition}\label{Q=X^1/2} (see \cite[Main Theorem]{BFI2})
   There exists an absolute constant $B_1$ with the following property :
\vskip .2cm 
 For every integer  $a\not=0$,   for every $x$, $y$,   and $Q$ satisfying   $x\geqslant y\geqslant 3$, $Q^2\leqslant xy$, we have the inequality    
$$
\sum_{q\sim  Q \atop (q,a) =1}\ 
 \ \Bigl\vert \psi( x;q,a)-{\psi (x)\over \varphi (q)}\Bigr\vert\ 
 \ll x\,\Bigl( {\log y\over \log x}\Bigr)^2\cdot  (\log \log x)^{B_1}, 
$$
where the constant implied  in $\ll$ depends on $a$ at most.
 \end{proposition}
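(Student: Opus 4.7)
The plan is to follow the Bombieri--Friedlander--Iwaniec strategy, which extends the Bombieri--Vinogradov range beyond $Q=x^{1/2}$ at the price of inserting the factor $(\log y/\log x)^2$. The starting point is a combinatorial identity for $\Lambda(n)$, say Heath--Brown's identity, which expresses $\Lambda$ on $[1,x]$ as a bounded linear combination of convolutions $\alpha_1\ast\cdots\ast\alpha_k$ with $k\leqslant 6$, where some of the factors are smooth ``short'' sequences $1_{[1,M_i]}$ and the others are truncated M\"obius-like sequences. After a smooth dyadic partition of unity we reduce the estimation of $\sum_{q\sim Q,(q,a)=1}|\psi(x;q,a)-\psi(x)/\varphi(q)|$ to estimating, uniformly in sign choices, sums of the type
$$
\sum_{q\sim Q,(q,a)=1}\varepsilon_q \Bigl(\sum_{m\sim M,n\sim N \atop mn\equiv a \,(q)} \alpha_m\beta_n - \frac{1}{\varphi(q)}\sum_{m\sim M,n\sim N \atop (mn,q)=1}\alpha_m\beta_n\Bigr)
$$
with $MN\asymp x$, $|\varepsilon_q|=1$, and coefficients $\alpha_m,\beta_n$ of divisor-bounded size.

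Sums of type I, in which $\beta_n=1$ on a long smooth interval, are treated directly: the inner sum is an incomplete arithmetic progression count, which via Poisson summation modulo $q$ produces a main term matching $\psi(x)/\varphi(q)$ and a remainder controlled by averaging $|\alpha_m|$ trivially; this handles the ranges $M\leqslant x^{1/3}$ or $M\geqslant x^{2/3}$. The decisive range is the bilinear (type II) one, $x^{1/3}\leqslant M,N\leqslant x^{2/3}$. Here I would apply Linnik's dispersion method: square out and average over $q$, opening the absolute value by $|S_q|^2 = S_q\overline{S_q}$ and expanding. The diagonal contribution gives an admissible main term, while the off-diagonal pieces take, after reciprocity on the congruence $mn\equiv a\pmod q$, the shape of incomplete Kloosterman sum averages
$$
\sum_{q\sim Q}\sum_{n_1,n_2}\beta_{n_1}\overline{\beta_{n_2}}\,e\!\left(\frac{a\overline{n_1n_2}}{q}\right) \cdot (\text{smooth weight in }m).
$$

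The critical estimate is then to bound such triple sums of Kloosterman fractions, and this is exactly where the Deshouillers--Iwaniec spectral theory of sums of Kloosterman sums is used: their Theorem on bilinear forms in Kloosterman fractions gives a bound proportional to $Q$ times the root of the $\ell^2$-norms of the coefficient sequences, with power savings in $x$ as soon as $Q^2\leqslant xy$. Summing over the $O((\log x)^2)$ dyadic combinations of $(M,N)$ and over the finitely many Heath--Brown pieces accounts for the $(\log y/\log x)^2$ factor, and the loss from the Deshouillers--Iwaniec input contributes the power of $\log\log x$. Coprimality with $a$ is handled by the usual removal of the finitely many bad factors.

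The main obstacle, by far, is the handling of the bilinear Kloosterman averages: the permissible ranges for $M$ and $N$ just fail to yield to elementary Weil-type pointwise bounds on Kloosterman sums once $Q$ exceeds $x^{1/2}$, and one must exploit non-trivial cancellation obtained by averaging $q$ together with $n_1,n_2$ through Kuznetsov's formula and the Iwaniec--Weil bound on Fourier coefficients of Maass cusp forms. Every other step (Heath--Brown decomposition, dispersion expansion, type I estimate, reciprocity) is by now standard, but this Kloosterman spectral input is what makes the range $Q^2\leqslant xy$ accessible and dictates the precise shape of the upper bound.
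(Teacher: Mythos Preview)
The paper does not prove this proposition; it is quoted verbatim from \cite[Main Theorem]{BFI2}. However, the paper's proof of Theorem~\ref{extension} in \S\ref{goodday} closely follows BFI2's original argument, so one can read off from it what the actual proof of Proposition~\ref{Q=X^1/2} looks like. Compared with that, your sketch has a genuine gap in the explanation of where the factor $(\log y/\log x)^2$ comes from.

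You write that ``summing over the $O((\log x)^2)$ dyadic combinations of $(M,N)$ \ldots\ accounts for the $(\log y/\log x)^2$ factor''. This is not the mechanism. Summing over all dyadic boxes would produce a factor $(\log x)^{O(1)}$, with no $y$ in sight; and in fact for \emph{most} configurations the bilinear/trilinear dispersion estimates (Propositions~\ref{17:09}--\ref{deligne} in the paper) save an \emph{arbitrary} power of $\log x$, so their contribution is $\ll x\mathcal L^{-C}$ for any $C$. The factor $(\log y/\log x)^2$ arises instead from a small set of \emph{boundary configurations} (see \S\ref{boundaryconf}): after Heath--Brown's identity one has many variables $M_1,\dots,N_1,\dots$, and one introduces a parameter $\Delta=y^6$. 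Configurations in which certain groups of variables are too close to each other (within a factor $\Delta$, conditions $(B_4)$--$(B_6)$) cannot be fed into any of the dispersion-type propositions because the required asymmetry between factors is absent. For these one uses Proposition~\ref{SAM1} together with divisor-sum lemmas (Lemmas~\ref{22jan1} and \ref{15:10}, i.e.\ Lemmas~14--15 of \cite{BFI2}), and it is precisely these lemmas that output the factor $(\log y/\log x)^2$. The $(\log\log x)^{B_1}$ comes from the same divisor-sum lemmas combined with the choice $z=\exp(\log x/(\log\log x)^2)$ for the sifting parameter, not from the Deshouillers--Iwaniec input.

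A second, related issue is that your dichotomy Type~I ($M\leqslant x^{1/3}$ or $M\geqslant x^{2/3}$) versus Type~II ($x^{1/3}\leqslant M,N\leqslant x^{2/3}$) is far too coarse for this result. The BFI2 argument works with up to $14$ variables after Heath--Brown with $J=7$, and the ``interior'' analysis (\S\ref{interior}) is an intricate case analysis on the sizes of these variables, invoking several different trilinear estimates (conditions $(S1)$--$(S4)$ of Propositions~\ref{trilinear}--\ref{trilinearsmooth}) rather than a single bilinear Kloosterman bound. The Deshouillers--Iwaniec spectral theory does underlie some of these propositions, but the combinatorics that decide \emph{which} proposition applies to \emph{which} configuration is the real substance of the proof, and your sketch does not touch it.
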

The trivial upper bound for the quantity studied in Proposition \ref{Q=X^1/2} is  $O(x)$. The same is also true for the quantities which are majorized in Propositions  \ref{B--V} and \ref{fouv1}.  Hence, when $Q=x^{1\over 2}$, the upper bound given in Proposition \ref{Q=X^1/2} is  non trivial only by a factor   $(\log x)^{-2} (\log \log x)^{B_1}$. It will be sufficient for our proof, however.
 
  Proposition \ref{Q=X^1/2} is also interesting for $Q=x^{{1\over 2}+\eps (x)}$, with $\eps (x)\to 0$ as $x\rightarrow \infty$, giving an asymptotic expansion of $\psi (x;q,a)\sim {\psi (x)\over \varphi (q)}$, for almost all  $q\sim Q$, satisfying $(q,a)=1$.  The technique of proof 
  of Proposition \ref{Q=X^1/2} was followed up  in \cite{BFI3}, leading to the relation $\psi (x;q,a)\asymp_\delta {\psi (x)\over \varphi (q)}$, for almost all $q\sim Q$, satisfying $(q,a)=1$, with $Q\leqslant x^{{1\over 2}+\delta}$ and where $\delta$ is a tiny positive constant.
 
 Actually, we shall use Proposition \ref{Q=X^1/2} under the form
\begin{equation}\label{2jan1}
\sum_{q\sim Q \atop (a,q) =1}\ 
 \ \Bigl\vert \pi( x;q,a)-{\pi (x)\over \varphi (q)}\Bigr\vert\ 
 \ll {x\over \log x}\cdot \Bigl( {\log y\over \log x}\Bigr)^2 \cdot  (\log \log x)^{B_1}. 
\end{equation}
 This is a standard consequence  of the inequality
$$
0\leqslant \psi (x) -\theta (x) \ll x^{1\over 2}\, \mathcal L,
$$
  and of the Abel summation formula written under the form
$$
\pi (x) = \int_{3\over 2}^x {1\over \log t} \  [{\rm d}\, \theta (t)],
$$
 and a similar formula for $\pi (x;q,a)$.
 The equality \eqref{2jan1} is well suited to the proof of Theorem \ref{firstgeneral} but is not sufficient for the proof of Theorem \ref{secondgeneral}.
    In \S \ref{goodday}, we shall adapt the original proof of  Proposition \ref{Q=X^1/2} to prove
 \begin{theorem}\label{extension} There exists an absolute constant $B_2$ with the following property :
 \vskip.2cm
 For every integer  $a\not=0$,    for every $x$, $y$, $P_1$, $P_2$ and $Q$ satisfying   $3\leqslant y\leqslant x$, $Q^2\leqslant xy$ and $1\leqslant P_1 \leqslant P_2$,  we  have
 the inequality 
\begin{align*}
 \sum_{q \sim Q  \atop (q,a)=1} \Bigl\vert 
\underset{P_1< p\leqslant P_2,\ pm\leqslant x\atop pm\equiv a \bmod q}{ \sum\ \ \  \sum} \log p  -{1\over \varphi (q)}
\underset{P_1< p\leqslant P_2,\ pm\leqslant x\atop (pm, q)=1}{ \sum\ \ \  \sum} \log p
 \Bigr\vert \ll   
  x\cdot {(\log y)^2\over \log x} \cdot  (\log \log x)^{B_2},
\end{align*}
 where the constant implied in $\ll$ depends   on $a$   at most.
 \end{theorem}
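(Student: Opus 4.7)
The plan is to adapt the proof of Proposition \ref{Q=X^1/2} from \cite{BFI2} to this slightly different setting. The original BFI argument begins by applying Heath--Brown's identity to decompose $\Lambda(n)$ into a bounded combination of Type I sums (one smooth long variable) and Type II bilinear sums (two variables in a middle dyadic range). In Theorem \ref{extension} we already have a natural bilinear structure built in: the left--hand side is of the form $\sum_p \sum_m \log p \cdot \mathbf{1}[pm\leqslant x,\ pm\equiv a\bmod q]$, with $p$ restricted to $(P_1,P_2]$. So the Heath--Brown step can be skipped entirely, and we may directly split the variable $p$ into dyadic ranges $p\sim P$ with $P_1/2\leqslant P\leqslant P_2$.

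First I would reduce, by a smooth dyadic partition in $p$, to bounding for each such $P$ the quantity
\begin{equation*}
\sum_{q\sim Q,\,(q,a)=1}\Bigl\vert \sum_{p\sim P}\log p\Bigl(\sum_{\substack{m\leqslant x/p\\ pm\equiv a\bmod q}}1 \,-\,\frac{1}{\varphi(q)}\sum_{\substack{m\leqslant x/p\\ (pm,q)=1}}1\Bigr)\Bigr\vert.
\end{equation*}
The number of dyadic intervals is $O(\log x)$, and this logarithmic loss is easily absorbed into the factor $(\log y)^2$ of the statement, provided the Type I / Type II threshold is chosen as a function of $y$. Next, I would distinguish three regimes for $P$: (i) $P\leqslant \sqrt{x/y}$, in which case the inner sum over $m$ runs over a long arithmetic progression and a Type I estimate (Barban--Davenport--Halberstam type) suffices; (ii) $P\geqslant \sqrt{xy}$, which after swapping the roles of $p$ and $m$ (or equivalently writing $n=pm$ and viewing $p$ as a short divisor) reduces to the same Type I count with the roles exchanged; (iii) $\sqrt{x/y}<P<\sqrt{xy}$, where the full dispersion method of Linnik, as carried out in \cite[\S 3--\S 9]{BFI2}, is invoked together with the Deshouillers--Iwaniec bounds on sums of Kloosterman sums from \cite{DeIw}.

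The heart of the matter is case (iii). There one opens the congruence $pm\equiv a\bmod q$ by Cauchy--Schwarz in the pair $(q,p)$, expands the resulting square, separates the diagonal from the off--diagonal, and reduces the off--diagonal to incomplete sums of Kloosterman sums handled by \cite{DeIw}. The main obstacle I anticipate is checking that BFI's dispersion calculation goes through with the particular weights $\log p\cdot\mathbf{1}[p\text{ prime}]$ on the $p$--side and $1$ on the $m$--side, rather than the generic coefficients $(\alpha_n),(\beta_m)$ supplied by Heath--Brown's identity. Fortunately the dispersion argument depends only on the $\ell^2$ norms of the coefficient sequences together with standard divisor estimates; for our weights one has $\sum_{p\sim P}\log^2 p\ll P\log P$ by the prime number theorem and $\sum_{m\sim x/P}1\asymp x/P$ trivially, and these match the sizes required by BFI. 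After summing the contributions of the three regimes over the dyadic decomposition, the constraint $Q^2\leqslant xy$ together with the $O(\log y)$ effective Type II dyadic intervals (which gets squared upon Cauchy--Schwarz) yields the claimed bound of size $x(\log y)^2(\log x)^{-1}(\log\log x)^{B_2}$.
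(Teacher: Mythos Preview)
Your proposal has a genuine gap at its core: the assertion that ``the Heath--Brown step can be skipped entirely'' is incorrect, and the direct dispersion you sketch in case (iii) does not close. After the easy reductions one is left with $Q\asymp x^{1/2}$ and $p\sim P$ with $P$ of size $x^{1/2}$ up to a factor $y_0=\exp(\log x/\log\log x)$; hence $m\sim x/P$ is \emph{also} of size $x^{1/2}$. A bilinear form $\boldsymbol\alpha*\boldsymbol\beta$ with both factors of size $x^{1/2}$ at level $Q\sim x^{1/2}$ is exactly the configuration that none of the BFI building blocks cover: Proposition~\ref{17:09} requires one factor below $x^{1/6-\eps}$, and the trilinear Propositions~\ref{trilinear}, \ref{fouvrytrilinear}, \ref{trilinearsmooth} all need a genuine third factor. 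If you try to run Linnik's dispersion directly on $(\log p)*{\mathbf 1}$, the Weil (or Deshouillers--Iwaniec) Kloosterman input produces an error of the shape $N^3Q^{-1}x^{O(\eps)}$ in $W_1$ (cf.\ \eqref{22fev3} with $L=1$), which for $N\sim Q\sim x^{1/2}$ is $\gg x^{1+O(\eps)}$ and destroys the bound. Your Type~I cases fare no better: in case (i) the error from counting $m$ in a progression is $O(1)$ per pair $(p,q)$, giving $O(PQ\log x)\asymp x\log x$, i.e.\ the trivial bound; in case (ii) ``swapping roles'' turns the inner sum into $\psi(x/m;q,a\overline m)$, which is precisely what one is trying to control and lies far beyond the Bombieri--Vinogradov range.

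The paper's proof proceeds quite differently. Heath--Brown's identity (Lemma~\ref{HeathB}, with $J=7$) is applied to $\Lambda(n)$ on the \emph{prime} variable $n=p$, not to manufacture a bilinear structure but to shatter $p$ into a product $m_1\cdots m_j n_1\cdots n_j$ of many small pieces. Meanwhile the free variable $m$ (renamed $t$) is factored as $t=uw$ with $w$ $z$-smooth and $u$ $z$-rough, so that $u$ can be glued to the Heath--Brown variables as an extra $n_{j+1}$ while $w$ remains tiny. One then runs the full BFI combinatorics of \cite[\S13--15]{BFI2}: the ``boundary configurations'' are dispatched by Proposition~\ref{SAM1} together with the new Lemmas~\ref{22jan1}--\ref{15:10} (which extend \cite[Lemmas 14--15]{BFI2} to accommodate the unsifted variable $w$), and the ``interior'' is handled by a lengthy case analysis ($s=1,\dots,6$) invoking Propositions~\ref{17:09}, \ref{trilinear}, \ref{trilinearsmooth}, \ref{deligne}, and crucially the new Proposition~\ref{fouvrytrilinear}, proved in \S\ref{convolof3fouvry} precisely to drop the hypothesis $(A_3(x))$ that the presence of $w$ would otherwise violate. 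In short, the extra variable $m$ is not a simplification but a complication, and the Heath--Brown decomposition of $p$ together with the $t=uw$ splitting are both indispensable.
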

 Note that the trivial bound for the quantity  now studied   is 
\begin{align} 
& \sum_{q\sim Q} \Bigl( \sum_{n\equiv a \bmod q \atop n\leqslant x} \sum_{p\mid n} \log p +{1\over \varphi (q)} \sum_{(n,q)=1\atop n \leqslant x} \sum_{p\mid n} \log p\Bigr) \nonumber\\
&\ll   \sum_{q\sim Q}  \Bigl( \sum_{n\leqslant x \atop n\equiv a \bmod q}\log n +{1\over \varphi (q)} \sum_{n\leqslant x \atop (n,q)=1} \log n\Bigr)\nonumber \\
&\ll  x\, (\log 2x).\label{20jan1}
  \end{align}
  The proof  we shall give 
 is highly  based on the work
 \cite{BFI2}.   We shall use the same tools and, as far as possible the same notations,  but our proof is   more than a paraphrase of the original proof of \cite{BFI2}: the intrusion of the integer variable $m$ creates 
 a new combinatorial situation  that we cannot ignore. In the same order of ideas, the variable $m$ brings unsuspected difficulty linked with coprimality conditions (we  shall work a lot to circumvent the  condition $(A_3(x))$ below).  
  
 \section{Analytic preparation}\label{proofofTheorem4}
 We first recall some results concerning the average behavior of the divisor functions. The following subsection contains the results of Lemmas 11--15 of \cite{BFI2}.
 \subsection{Lemmas on divisor functions}\label{divisorfunctions}
  Let $\ell \geqslant 0$ be an integer, and let $n\geqslant 1$ be an integer, then we define
  $$
 \tau_\ell (n):=\sum_{n=n_1 \cdots n_\ell} 1,
  $$
  (this is the generalized divisor function of order $\ell$),  then    
  $$
  \tau (n) =\tau_2 (n),
  $$
 is the classical divisor function. Of course $\tau_0 (n)=1$ if and only if $n=1$, otherwise, its value is $0$. Again some notations:
 
\noindent $\bullet $ ${\mathbf 1}_{\mathcal E} $ is the characteristic function of the given subset  of integers $\mathcal E$,
 
\noindent $\bullet$  $\mathfrak z (n)$ is the characteristic function of the set of  integers $n$ divisible by no prime factor $<z$, where $z\geqslant  2$ is  a given number.  

   We first make a list of several upper bounds for sums of $\tau_\ell^k (n)$ and of $\mathfrak z (n) \tau_\ell (n)$. 
 \begin{lemma}\label{linnik1}
Let $k\geqslant 0$ and $\ell \geqslant 1$ be integers and let $\eps >0$.  We then have the inequality
 $$
 \sum_{x-y<n\leqslant x}\tau_\ell^k (n)\ll_\eps y\,  (\log 2x)^{\ell^k-1},
 $$
 uniformly for $x\geqslant y\geqslant x^\eps$ and $x\geqslant 1$.
 \end{lemma}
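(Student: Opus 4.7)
The plan is to invoke P.~Shiu's theorem on averages of non-negative multiplicative functions in short intervals. Set $f := \tau_\ell^k$; since $\tau_\ell$ is multiplicative, so is $f$. From the explicit formula $\tau_\ell(p^a) = \binom{a+\ell-1}{\ell-1}$ one reads off $f(p) = \ell^k$ and $f(p^a) \leqslant (a+1)^{(\ell-1)k}$, which furnishes both a uniform prime-power bound $f(p^a) \leqslant A^a$ for some constant $A = A(\ell,k)$ and the sub-polynomial growth $f(n) \ll_\eta n^\eta$ for every $\eta > 0$. These are precisely the hypotheses required by Shiu's theorem.

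Applied to $f$, Shiu's theorem then yields, uniformly for $x \geqslant y \geqslant x^\eps$,
$$\sum_{x-y < n \leqslant x} f(n) \ll_{\ell,k,\eps} \frac{y}{\log x}\, \exp\Bigl(\sum_{p \leqslant x} \frac{f(p)}{p}\Bigr).$$
Since $f(p) = \ell^k$ is constant on primes, Mertens' theorem gives $\sum_{p \leqslant x} f(p)/p = \ell^k \log\log x + O(1)$, so the exponential factor is $\ll (\log x)^{\ell^k}$. Combined with the prefactor $1/\log x$ this produces the asserted bound $y(\log 2x)^{\ell^k - 1}$; the $2$ inside the logarithm just absorbs the trivial range of small $x$, which could also be handled separately by noting that for bounded $x$ the sum contains $O(1)$ terms each $\ll_{\ell,k} 1$.

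The only thing to check is that the hypotheses of Shiu's theorem are satisfied uniformly in $x$ and $y$, and this is immediate from the explicit combinatorics of $\tau_\ell$ on prime powers; the bound $f(p^a) \leqslant A^a$ displayed above is far more generous than what Shiu needs. There is no genuine obstacle here — if one wished to avoid invoking Shiu as a black box, one could instead write $\tau_\ell^k = \mathbf{1} * g_{\ell,k}$ for a multiplicative function $g_{\ell,k}$ effectively supported on squareful integers, estimate $\sum g_{\ell,k}$ by Rankin's trick, and convolve with the trivial estimate for $\sum_{n \leqslant t} 1$; but this route is substantially longer and recovers the same bound.
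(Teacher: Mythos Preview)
Your proof is correct and follows exactly one of the two routes the paper indicates: the paper's proof is simply a citation to Linnik's monograph or to Shiu's theorem, and you have written out the verification that $\tau_\ell^k$ satisfies Shiu's hypotheses and that the resulting bound is the one claimed. There is nothing to add.
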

\begin{proof}
See \cite[Lemma 1.1.5]{Lin}  for instance or deduce  this lemma from the classical result of Shiu \cite{Shi}. 
\end{proof}
 It is well known that the main part of the divisor function $\tau_\ell (n)$  comes from the small divisors  of $n$. Hence the summatory functions of $\mathfrak z (n) \tau_\ell (n)$  and $\tau_\ell (n)$ have  different behaviors when $z$ becomes larger and larger.  This is the object of the next lemma.   
 \begin{lemma}\label{lemma13ofBFI2}
 Let $j\geqslant 0$ be an integer. We   have the six relations
 $$
 \sum_{n\leqslant x} \mathfrak z (n) \tau_j (n) \ll {x\over \log 2x}\cdot \Bigl( {\log 2xz\over \log 2z}\Bigr)^j ,\leqno (i) 
 $$
 $$
 \sum_{n\leqslant x} \mathfrak z (n) \tau_j (n)n^{-1} \ll     \Bigl( {\log 2xz\over \log 2z}\Bigr)^j ,\leqno (i')  
 $$
 $$
 \sum_{w<n\leqslant x} \mathfrak z (n) \tau_j (n)n^{-1}(\log 2n)^{-1}  \ll {1\over \log 2w}  \cdot \Bigl( {\log 2xz\over \log 2z}\Bigr)^j ,\leqno (i'')  
 $$
 $$
 \sum_{x<n\leqslant xy} \mathfrak z (n) \tau_j (n)n^{-1} \ll \Bigl({\log 2y\over \log 2x}\Bigr)\cdot \Bigl( {\log 2xyz\over \log 2z}\Bigr)^j, \leqno (ii) 
 $$
 $$
 \sum_{nt\leqslant x} \mathfrak z (n) \tau_{j  }(n) \ll  x\cdot (\log \log 3x)\cdot     \Bigl( {\log 2xz\over \log 2z}\Bigr)^{j } ,\leqno (iii)  
 $$
 and
 $$
 \sum_{x<nt\leqslant xy} \mathfrak z (n) \tau_{j } (n)(nt)^{-1} \ll (\log 2y)\cdot (\log \log 3xy) \cdot \Bigl( {\log 2xyz\over \log 2z}\Bigr)^{j }, \leqno (iv) 
 $$
 uniformly for  real   $w,\, x,\, y,\, z\geqslant 1$.
 \end{lemma}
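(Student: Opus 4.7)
The plan is to follow \cite[Lemmas~11--15]{BFI2} directly, as the author has flagged these as the source. The central point is that $g(n):=\mathfrak{z}(n)\tau_j(n)$ is a nonnegative multiplicative function with Dirichlet series $\sum_{n\geqslant 1} g(n) n^{-s}=\prod_{p\geqslant z}(1-p^{-s})^{-j}$, having a pole of order $j$ at $s=1$. By Mertens' theorem the relevant Euler product $\prod_{z\leqslant p\leqslant x}(1-1/p)^{-j}$ is of size $(\log 2x/\log 2z)^{j}$, so the quantity $(\log 2xz/\log 2z)^{j}$ appearing in every bound is the correct normalized density of $z$--rough integers weighted by $\tau_j$, written in a form that treats the regimes $z\leqslant x$ and $z>x$ uniformly.

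For (i), I would apply Shiu's theorem to $g$, yielding
$$\sum_{n\leqslant x} g(n)\ll \frac{x}{\log 2x}\prod_{p\leqslant x}\Bigl(\sum_{k\geqslant 0}g(p^k)/p^k\Bigr),$$
and the product collapses via Mertens to $(\log 2x/\log 2z)^{j}$; the case $z>x$ is immediate, since then $g$ is supported only on $n=1$ and primes, and the prime number theorem suffices. Alternatively one may use the convolution identity $g=h_z\ast\tau_j$, where $h_z$ is supported on $z$--smooth squarefree numbers with generating function $\prod_{p<z}(1-p^{-s})^{j}$, and treat the smooth sum by Rankin's trick. The bounds (i'), (i''), (ii) follow from (i) by Abel summation: setting $G(t):=\sum_{n\leqslant t}g(n)$, one has $\sum_{a<n\leqslant b}g(n)/n = G(b)/b - G(a)/a + \int_a^b G(t)t^{-2}\,dt$, and similarly with the weight $(\log 2n)^{-1}$ for (i''); for (ii) the gain of $\log 2y$ comes from $\int_x^{xy}dt/(t\log 2t)$. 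Finally, (iii) and (iv) reduce to (i') by doing the $t$-sum first, $\sum_{t\leqslant x/n}1\leqslant x/n$; the extra $\log\log$ factor arises from a careful dyadic splitting of the boundary range $n\asymp x$.

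The main technical obstacle is keeping the estimates uniform in $z$: the single expression $(\log 2xz/\log 2z)^{j}$ must correctly interpolate between $z=2$ (where (i) reproduces the classical bound $\sum_{n\leqslant x}\tau_j(n)\ll x(\log 2x)^{j-1}$) and $z\asymp x$ (where $g$ is essentially supported on primes and the prime number theorem takes over). Shiu's theorem delivers this uniformity in one stroke, which is why the BFI2 proofs adopt this strategy, and why the same template yields all six inequalities with almost identical arguments.
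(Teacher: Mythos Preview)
Your outline is essentially correct and close to the paper's own argument, but two points deserve adjustment.

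First, deriving $(i')$ from $(i)$ by Abel summation does not quite work. Writing $G(t)=\sum_{n\leqslant t}\mathfrak z(n)\tau_j(n)$ and using $(i)$, the integral $\int_1^x G(t)t^{-2}\,dt$ is only bounded by $\int_1^x (t\log 2t)^{-1}\bigl(\tfrac{\log 2tz}{\log 2z}\bigr)^j\,dt$; when $z$ is a fixed power of $x$ the bracketed factor is $\asymp 1$ throughout and the integral is $\asymp\log\log x$, which is too large. The paper instead obtains $(i')$ in one line from the Euler-product bound
\[
\sum_{n\leqslant x}\mathfrak z(n)\tau_j(n)n^{-1}\ \leqslant\ \prod_{z\leqslant p\leqslant x}(1-p^{-1})^{-j}
\]
and Mertens (which you already invoked in your preamble), and then $(i'')$ follows from $(i')$ by pulling out the factor $(\log 2w)^{-1}$.

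Second, for $(iii)$ and $(iv)$ the paper proceeds differently from you: it puts the $t$-sum \emph{outside} and applies $(i)$ (resp.\ $(ii)$) to the inner $n$-sum, so that the $\log\log$ factor comes from $\sum_{t\leqslant x}\bigl(t\log(2x/t)\bigr)^{-1}\asymp\log\log 3x$. Your route---bounding $\sum_{t\leqslant x/n}1\leqslant x/n$ and reducing to $(i')$---is actually cleaner and yields $(iii)$ \emph{without} any $\log\log$ factor, and the same happens for $(iv)$ once the boundary contribution $\sum_{n\leqslant x}\mathfrak z(n)\tau_j(n)\cdot x^{-1}$ is controlled by $(i)$. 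Hence your sentence ``the extra $\log\log$ factor arises from a careful dyadic splitting of the boundary range $n\asymp x$'' is misplaced: in your argument no such factor appears at all, and the $\log\log$ in the stated bounds $(iii)$--$(iv)$ is simply an artefact of the paper's order of summation.
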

 \begin{proof} For $j=0$, all  the results are trivial. For $j\geqslant 1$,  the items $(i)$ and $(ii)$ are exactly \cite[Lemma 13]{BFI2}. Also note that $(i)$ can also be seen as a  direct consequence of Shiu's result (\cite[Theorem 1]{Shi}) concerning sums of multiplicative functions,  with the adequate remarks concerning the uniformity of this result (see \cite[p.258]{Na} or \cite[p.119]{NaTe}).  
 
 The item $(i')$ is a direct consequence of Mertens formula. The inequality $(i'')$ is a trivial consequence of $(i')$.

 In the items $(iii)$ and $(iv)$, we impose no sifting condition on the variable $t$.  This explains the change in the asymptotic order.  We pass from $(i)$ to $(iii)$ by writing 
 $$
 \sum_{nt\leqslant x} \mathfrak z (n) \tau_{j } (n)=
 \sum_{t\leqslant x}  \ \ \sum_{  n\leqslant x /t}\  \mathfrak z (n) \tau_{j } (n) \ll
 \Bigl({\log 2xz\over \log 2z}\Bigr)^j
 \sum_{t\leqslant  x} {x/t\over \log (2x/t)} ,$$
 and summing over $t$.

   Finally for $(iv)$, we decompose 
   \begin{align}
    \sum_{x<nt\leqslant xy} &\mathfrak z (n) \tau_{j } (n)(nt)^{-1}\nonumber \\
  &=\Bigl\{
   \sum_{t\leqslant x} t^{-1}\  \sum_{x/t< n\leqslant xy/t} + \sum_{x<t\leqslant xy} t^{-1}\  \sum_{1< n\leqslant xy/t}
   \Bigr\}\mathfrak z (n)  \tau_j (n) n^{-1}\nonumber\\
  & :=\Sigma_1 +\Sigma_2,\label{20:56}
   \end{align}
 say.   For $\Sigma_1$ we use   $(ii)$ to write 
 the relations
 $$
 \Sigma_1 \ll ( \log 2y) \cdot \Bigl( {\log 2xyz\over \log 2z}\Bigr)^j \sum_{t\leqslant  x} {1\over t\log (2x/t)}
 \ll ( \log 2y) \cdot (\log \log 3x)\cdot  \Bigl( {\log 2xyz\over \log 2z}\Bigr)^j,
 $$
 which is acceptable in view of $(iv)$.
 Finally, for $\Sigma_2$, we use $(i')$  to write
 $$
 \sum_{1\leqslant  n \leqslant  xy/t} \mathfrak z (n) \tau_j (n) n^{-1} \ll  \Bigl( {\log 2xyz \over \log 2z}\Bigr)^j,
  $$
 for $t\geqslant x$. Inserting this bound into \eqref{20:56}, we complete the proof of $(iv)$.
  \end{proof}
 We continue our investigations for more intricate sums.  
 \begin{lemma}\label{22jan1}
  Let $j_1$, $j_2$, $j_3$ and $j_4$ be integers $ \geqslant  0$. We then have
\begin{align}
  \underset{\substack{n_1n_2n_3n_4\leqslant x \\
  w\leqslant n_4\leqslant n_3 \leqslant n_2 \leqslant n_1\\
   n_3\leqslant y n_4,\, n_1\leqslant y n_2\\
  }}{\sum\ \sum\ \sum\  \sum}&\ 
 \mathfrak z (n_1n_2n_3 n_4) \tau_{j_1} (n_1) \tau_{j_2} (n_2) \tau_{j_3} (n_3) \tau_{j_4} (n_4) \nonumber \\
&  \ll {x\over \log 2w}\, \Bigl( {\log 2y \over \log 2x}\Bigr)^2\, \Bigl( {\log 2xyz\over \log 2z}\Bigr)^{j_1+j_2+j_3+j_4},\label{11:11}
 \end{align}
 uniformly for $x$, $y$, $z$, $w\geqslant 1$. The constant implied in $\ll$ only depends on $j_1$, $j_2$, $j_3$ and $j_4$.
  Similarly we have  
 \begin{align}
  \underset{\substack{tn_1n_2n_3n_4\leqslant x \\
  w\leqslant n_4\leqslant n_3 \leqslant n_2 \leqslant t n_1\\
   n_3\leqslant y n_4,\, tn_1\leqslant y n_2\\
  }}{\sum\ \sum\ \sum\  \sum\  \sum} &\ 
 \mathfrak z (n_1n_2n_3 n_4)\,  \tau_{j_1 } (n_1) \tau_{j_2} (n_2) \tau_{j_3} (n_3) \tau_{j_4} (n_4) \nonumber \\
   \ll &(\log \log 3xyz)\cdot  {x\over \log 2w}\cdot  {(\log 2y )^2\over \log 2x}  \cdot  \Bigl( {\log 2xyz\over \log 2z}\Bigr)^{j_1+j_2+j_3+j_4 }   .\label{11:09}
 \end{align}
 Finally, the relation \eqref{11:09} remains true if the summation is replaced by each of the three following ones
 \begin{equation}\label{13:50}
  \underset{\substack{tn_1n_2n_3n_4\leqslant x \\
  w\leqslant n_4\leqslant n_3 \leqslant tn_2 \leqslant   n_1\\
   n_3\leqslant y n_4,\,  n_1\leqslant ty n_2 }}
 {\sum\ \sum\ \sum\  \sum\ \sum},\ \ 
  \underset{\substack{tn_1n_2n_3n_4\leqslant x \\
  w\leqslant n_4\leqslant tn_3 \leqslant n_2 \leqslant   n_1\\
   tn_3\leqslant y n_4,\,  n_1\leqslant y n_2\\
  }}{\sum\ \sum\ \sum\  \sum\ \sum} 
  \text{ or }
   \underset{\substack{tn_1n_2n_3n_4\leqslant x \\
  w\leqslant tn_4\leqslant n_3 \leqslant n_2 \leqslant   n_1\\
   n_3\leqslant  ty n_4,\,  n_1\leqslant y n_2\\
  }}{\sum\ \sum\ \sum\  \sum\ \sum}.
 \end{equation}
 \end{lemma}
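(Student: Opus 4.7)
The indicator $\mathfrak{z}$ is totally multiplicative on $\{0,1\}$ values, so $\mathfrak{z}(n_1n_2n_3n_4)=\prod_i\mathfrak{z}(n_i)$ and the weight factors cleanly across the four (respectively, five) variables. The overall strategy is to iterate Lemma \ref{lemma13ofBFI2}, summing the $n_i$ one at a time in an order that first exploits the ratio constraints $n_1\le yn_2$ and $n_3\le yn_4$. Concretely, for \eqref{11:11} I would fix $(n_2,n_3,n_4)$ and sum over $n_1\in(n_2,\min(yn_2,x/(n_2n_3n_4))]$ by applying partial summation to item $(ii)$ of Lemma \ref{lemma13ofBFI2}: the ratio-$y$ range gains a factor $\log 2y/\log 2n_2$, the length of the range contributing its crude upper bound $\min(yn_2,x/(n_2n_3n_4))$. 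Treating $n_3\in(n_4,yn_4]$ in the same way gains $yn_4\cdot\log 2y/\log 2n_4$. What remains is a double sum over $(n_2,n_4)$ with $w\le n_4\le n_2$ and a product constraint inherited from $n_1n_2n_3n_4\le x$. I would then bound the $n_2$-sum by item $(i)$ (its effective range $n_2\ll\sqrt{x/(yn_3n_4)}$ produces one factor $1/\log 2x$), and the $n_4$-sum by item $(i'')$ (whose built-in $1/\log 2n$ weight yields $1/\log 2w$ together with the second $1/\log 2x$). Combining the four stages recovers the right-hand side of \eqref{11:11}.

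For \eqref{11:09}, the extra variable $t$ carries no ordering constraint, so I would sum it jointly with $n_2$ or $n_4$ using items $(iii)$ or $(iv)$ of Lemma \ref{lemma13ofBFI2} in place of items $(i)$ or $(i'')$; each such item contributes exactly the additional $\log\log(3xyz)$ factor. The three variants in \eqref{13:50} correspond to pairing $t$ with $n_2$, $n_3$, or $n_4$ inside the ordered chain; the same iteration works, with the ``paired'' inner variable and $t$ summed jointly through items $(iii)$ or $(iv)$, so that the analysis is essentially a re-labelling of the previous one.

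The main obstacle will be the logarithmic bookkeeping: to obtain exactly $(\log 2y/\log 2x)^2$ (rather than a weaker factor in which some $\log 2x$ is replaced by $\log 2w$ or by $\log 2n_4$), one must check that the two ``free'' summations over $n_2$ and $n_4$ effectively take place where $\log 2n_i\asymp\log 2x$. This is forced by the product constraint $n_1n_2n_3n_4\le x$ combined with the ratio bounds $n_1\le yn_2$ and $n_3\le yn_4$, which concentrate the mass of the quadruple sum near $n_2\asymp\sqrt{x/(yn_3n_4)}$. A secondary technicality is the boundary where the product constraint clips the $n_1$- or $n_3$-range before the ratio cap binds; in that sub-region, however, the tight product constraint itself supplies the required saving, so the same final bound emerges.
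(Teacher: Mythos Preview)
Your overall scheme---iterate Lemma~\ref{lemma13ofBFI2} variable by variable---is right, but the first step is wrong and the error propagates. Summing $n_1$ over the ratio interval $(n_2,yn_2]$ does \emph{not} produce a factor $\log 2y/\log 2n_2$: item~$(ii)$ bounds the $n^{-1}$-weighted sum, and when you undo that weighting by partial summation you pick up the top endpoint $yn_2$, so the unweighted sum is $\ll \dfrac{yn_2}{\log 2n_2}\Bigl(\dfrac{\log 2xyz}{\log 2z}\Bigr)^{j_1}$, exactly what item~$(i)$ gives directly. There is no logarithmic gain from the ratio constraint at this stage; you acquire an unwanted polynomial factor $y$ (and another from the $n_3$-sum), which cannot be recovered later.

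The device that actually works, and which the paper (following \cite[Lemma~14]{BFI2}) uses, is to first restrict dyadically to $x/2<n_1n_2n_3n_4\le x$. One then sums $n_1$ using item~$(i)$ with \emph{only} the product constraint $n_1\le x/(n_2n_3n_4)$, obtaining $\dfrac{x}{n_2n_3n_4\log 2x}(\cdots)^{j_1}$. The ratio constraint $n_2\le n_1\le yn_2$ is not discarded: combined with the dyadic lower bound it forces $\sqrt{x/(2yn_3n_4)}<n_2\le\sqrt{x/(n_3n_4)}$, a ratio-$\sqrt{2y}$ interval for $n_2$. Now item~$(ii)$ applied to the $n_2$-sum legitimately yields $\log 2y/\log 2x$ (the lower endpoint being a power of $x$ once one separates the easy regime $y>x^{1/3}$). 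The $n_3$-sum over $(n_4,yn_4]$ gives the second $\log 2y$ factor, and $(i'')$ handles $n_4$. Your ``boundary where the product constraint clips the $n_1$-range'' is precisely the main case; the dyadic cut makes it the only case.

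For \eqref{11:09} your pairing is also misplaced: the constraints read $n_2\le tn_1\le yn_2$, so $t$ is tied to $n_1$, and the paper sums $t$ and $n_1$ together via item~$(iii)$ (this is where the $\log\log$ enters). For the variants in \eqref{13:50} the paper glues $t$ to whichever $n_i$ it accompanies in the constraints and invokes $(iii)$ or $(iv)$ at that stage. Your remark that $t$ ``carries no ordering constraint'' is incorrect in this sense.
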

\begin{proof} The upper bound \eqref{11:11} is exactly \cite[Lemma 14]{BFI2}. Remark that in \eqref{11:09} \& \eqref{13:50},  we are dealing with  sums  in dimension five since  we have replaced the variable $n_i$  in \eqref{11:11} by $tn_i$.  In that case, we say that 
the variable $t$ {\it is glued to $n_i$}.  This extra variable $t$, without sifting conditions, explains why the upperbound in \eqref{11:09} is larger than the corresponding one in \eqref{11:11} by a $\log 2x$--factor.  In our application
the value of  the exponent  of the $\log \log$--factor has no importance.   It remains to    adapt the proof of \cite[Lemma 14]{BFI2} to obtain \eqref{11:09} by appealing to  Lemma  \ref{lemma13ofBFI2}  and the upper bound   \eqref{11:11} of Lemma \ref{22jan1}.

We now give all the details for the proof of \eqref{11:09}, which corresponds to the case where $t$ is glued to $n_1$. 
By dyadic subdivision, we restrict   the summation to $x/2<tn_1n_2n_3n_4 \leqslant x.$ Playing with the 
conditions of summation in the left part of \eqref{11:09}, we deduce that the variables $n_2$, $n_3$ and $n_4$ satisfy
\begin{equation}\label{10:50}
n_2n_3n_4 \leqslant x^{3\over 4},\ n_3n_4\leqslant x^{1\over 2},\ w\leqslant n_4 \leqslant x^{1\over 4} \text{ and } x/2y <n_2^2n_3n_4\leqslant x.
\end{equation}
We first assume that
\begin{equation}\label{10:35}
y\leqslant x^{1\over 3}.
\end{equation}
We first sum on $t$ and $n_1$, by using Lemma \ref{lemma13ofBFI2} $(iii)$
\begin{equation}\label{name1}
\underset{t,\ \ n_1\atop tn_1\leqslant x/(n_2n_3n_4)}{\sum\ \  \sum}\  \mathfrak z (n_1)\,  \tau_{j_1}(n_1) \ll {x\over n_2n_3n_4} \cdot (\log \log 3xyz)\cdot \Bigl({\log 2xyz\over \log 2z}\Bigr)^{j_1}.
\end{equation}
Then, by Lemma \ref{lemma13ofBFI2} $(ii)$, by \eqref{10:50} and the restriction \eqref{10:35},  we have
\begin{equation}\label{name2}
\sum_{\sqrt{ x/2yn_3n_4}<  n_2\leqslant \sqrt{ x/n_3n_4}} \mathfrak z (n_2) \tau_{j_2} (n_2) n_2^{-1}
\ll {\log 2y \over \log 2x }\cdot  \Bigl({\log 2xyz\over \log 2z}\Bigr)^{j_2},
\end{equation}
\begin{equation}\label{12:09}
\sum_{n_4\leqslant n_3 \leqslant yn_4} \mathfrak z (n_3) \tau_{j_3} (n_3) n_3^{-1}
\ll {\log 2y \over \log 2n_4 }\cdot  \Bigl({\log 2xyz\over \log 2z}\Bigr)^{j_3},
\end{equation}
and finally
\begin{equation}\label{12:07}
\sum_{w\leqslant n_4\leqslant x^{1\over 4}}  \mathfrak z (n_4)\tau_{j_4} (n_4) n_4^{-1} (\log 2n_4)^{-1}\ll {1\over \log 2w} \cdot \Bigl( {\log 2xyz \over \log 2z}\Bigr)^{j_4},
\end{equation}
by Lemma  \ref{lemma13ofBFI2} $(i'')$. Putting together  \eqref{name1}, \eqref{name2}, \eqref{12:09} and \eqref{12:07}, we obtain \eqref{11:09} in the case of \eqref{10:35}.

We now suppose 
\begin{equation}\label{10:40}
y>x^{1\over 3}.
\end{equation}
Since $y$ is large, we lose almost nothing in forgetting the conditions
$n_3\leqslant yn_4 $ and $tn_1\leqslant yn_2$.  The sum that we are studying is less or equal to
\begin{equation}\label{12:28}
  \sum_{n_4\leqslant x^{1\over 4}} \mathfrak z (n_4) \tau_{j_4}(n_4)
  \sum_{n_3\leqslant x^{1\over 3}} \mathfrak z (n_3) \tau_{j_3}(n_3)
  \sum_{n_2\leqslant x^{1\over 2}} \mathfrak z (n_2) \tau_{j_2}(n_2)
 \underset {tn_1\leqslant x/(n_2n_3n_4)}{\sum\ \ \sum} \mathfrak z (n_1) \tau_{j_1}(n_1).
\end{equation}
Applying Lemma \ref{lemma13ofBFI2} $(iii)$ and $(i')$ three times, we see that the above quantity is
\begin{equation}\label{19fev1}
\ll x\cdot (\log \log 3xyz)\cdot  \Bigl({\log 2xyz\over \log 2z}\Bigr)^{j_1+j_2+j_3+j_4},
\end{equation}
which is less than the bound claimed in \eqref{11:09}, because of \eqref{10:40}. This completes the proof of \eqref{11:09} in all the cases.

The first case of \eqref{13:50} concerns the situation where $t$ is glued to $n_2$. The inequalities \eqref{10:50} are changed into
\begin{equation}\label{10:53}
tn_2n_3n_4 \leqslant x^{3\over 4},\ n_3n_4\leqslant x^{1\over 2},\ w\leqslant n_4 \leqslant x^{1\over 4} \text{ and } x/2y <t^2n_2^2n_3n_4\leqslant x,
\end{equation}
and we suppose that \eqref{10:35} is satisfied. By using respectively the items $(i)$ and  $(iv)$     of Lemma \ref{lemma13ofBFI2} we  can write 
\begin{equation}\label{16:17}
\underset{  \ n_1\atop n_1\leqslant x/(tn_2n_3n_4)}{\  \sum}\  \mathfrak z (n_1)\,  \tau_{j_1}(n_1) \ll {x/( tn_2n_3n_4)  \over \log 2x } \cdot \Bigl({\log 2xyz\over \log 2z}\Bigr)^{j_1},
\end{equation}
 $$\underset{ t,\ n_2\atop 
 \sqrt{ x/2yn_3n_4}<  tn_2\leqslant \sqrt{ x/n_3n_4}}{\sum\ \sum} \mathfrak z (n_2) \tau_{j_2} (n_2) (tn_2)^{-1}
\ll (\log \log 3xyz)\cdot (\log 2y )\cdot  \Bigl({\log 2xyz\over \log 2z}\Bigr)^{j_2},
$$
 and we use   \eqref{12:09} and  \eqref{12:07} again.
  Putting together the four  above results, we obtain \eqref{11:09}, for the first sum appearing in  
  \eqref{13:50} in the case where  \eqref{10:35} is satisfied. 
  
  By similar techniques we also prove that the second and third sums of \eqref{13:50} also satisfy \eqref{11:09} under the restriction \eqref{10:35}, which means $y$ small.  
  
  When \eqref{10:40} is satisfied ($y$ large) each of the three sums listed in \eqref{13:50} is less than the sum studied in \eqref{12:28}. By \eqref{19fev1}, we see that these three sums also satisfy \eqref{11:09}.
\end{proof}

Our last lemma on that subject is   
\begin{lemma}\label{15:10}
Let $x$, \,$y$, \,$z$,\, $w$ be real numbers $\geqslant 1$,  let $s=5$ or $6$ and  let $j_1,\dots , j_s$ be integers $\geqslant 0$.   We then have
\begin{equation}\label{15:12}\underset{
 \substack{n_1\cdots n_s\leqslant x\\
w\leqslant n_s\leqslant \cdots \leqslant n_1\\
n_{s-2} \leqslant yn_s}}{\sum\cdots \sum}
\mathfrak z (n_1\cdots n_s)\,  \tau_{j_1} (n_1)\cdots \tau_{j_s} (n_s)\ll
{x\over \log 2x} \, \Bigl( {\log 2y \over \log 2w}\Bigr)^2 \, \Bigl( {\log 2xyz \over \log 2z}\Bigr)^{j_1+\cdots + j_s},
\end{equation}
where the constant implied in $\ll$ depends at most on $j_1,\dots, j_s$. Similarly, we have for
   $s= 5$ or $6$ and $1\leqslant \nu \leqslant s$, the inequality
\begin{align}\label{13:00}
  \underset{(t,n_1,\dots, n_s)\in \mathcal E (s,\nu)}{\sum \cdots \sum} &
\mathfrak z (n_1\cdots n_s)\,  \tau_{j_1} (n_1)\cdots \tau_{j_s} (n_s)\\
&\ll x\cdot (\log \log 3xyz)^s \cdot 
   \Bigl( { \log 2y  \over \log 2w}\Bigr)^2 \cdot \Bigl( {\log 2xyz \over \log 2z}\Bigr)^{j_1+\cdots + j_s},\nonumber
\end{align}
where $\mathcal E (s,\nu)$ denotes the set of $s+1$--uples $(t,n_1,\dots, n_s)$ satisfying 
the inequalities
\begin{equation}\label{inequaforE} 
\begin{cases}
n_1\cdots (tn_\nu)\cdots n_s \leqslant x&\\
  w\leqslant n_s\cdots \leqslant (tn_\nu) \leqslant \cdots \leqslant n_1,&\\
n_{s-2}\leqslant yn_s,&\text { if }   \nu \not= s \text{ and }  s-2,\\
 tn_{s-2}\leqslant yn_s,&\text{ if } \nu =s-2,\\
n_{s-2} \leqslant  y tn_s,&\text{  if } \nu=s. 
\end{cases}
\end{equation}  
\end{lemma}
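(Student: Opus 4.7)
The plan is to mimic the proof of Lemma \ref{22jan1} (and of \cite[Lemma 14]{BFI2}), summing the variables in decreasing order of their index and applying the various items of Lemma \ref{lemma13ofBFI2}. I would split the analysis into two regimes, $y \leqslant x^{1/s}$ and $y > x^{1/s}$, as in the four--variable case.

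For \eqref{15:12} in the regime $y \leqslant x^{1/s}$, the order of summation is: first $n_1$, constrained by $n_1 \leqslant x/(n_2 \cdots n_s)$, using Lemma \ref{lemma13ofBFI2}$(i)$ to produce a factor of order $\{x/(n_2\cdots n_s \log 2x)\}\cdot (\log 2xyz/\log 2z)^{j_1}$; next $n_2$ (and, if $s=6$, $n_3$), which carries no ratio constraint so Lemma \ref{lemma13ofBFI2}$(i')$ applies with no decay gain, only the expected $(\log 2xyz/\log 2z)^{j_i}$; then the three bottom variables $n_{s-2},n_{s-1},n_s$, which satisfy $w\leqslant n_s \leqslant n_{s-1} \leqslant n_{s-2} \leqslant y n_s$. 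Here Lemma \ref{lemma13ofBFI2}$(ii)$ handles $n_{s-2}$ on a range of ratio $\leqslant y$ producing a factor $\log 2y/\log 2n_{s-1}$, $(ii)$ again handles $n_{s-1}$ producing $\log 2y/\log 2n_s$, and finally $(i'')$ handles $n_s \in [w, x^{1/s}]$ producing the $1/\log 2w$ factor. Collecting and bounding $\log 2n_{s-1}, \log 2n_s$ from below by $\log 2w$ yields a bound consistent with (in fact slightly stronger than) \eqref{15:12}. In the regime $y > x^{1/s}$, the ratio constraint $n_{s-2}\leqslant yn_s$ becomes redundant (since $n_{s-2}\leqslant x^{1/(s-2)}$), and iterating $(i)$ on $n_1$ with $(i')$ on $n_2,\dots, n_s$ already gives a bound that fits inside \eqref{15:12}, because $\log 2y/\log 2w \geqslant 1$.

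For \eqref{13:00} I would use the same scheme, replacing $(i)$ by $(iii)$ and $(ii)$ by $(iv)$ each time the summation in question involves the glued variable $t$. Each such substitution costs one factor $\log\log 3xyz$; the crude count $s$ in the statement is more than sufficient. Concretely, I would treat the three subcases according to the position of $\nu$: if $\nu \leqslant s-3$ (so $t$ is glued to one of the ``large'' variables), sum $t n_\nu$ jointly via $(iii)$ in place of $(i)$, the ratio constraint $n_{s-2}\leqslant y n_s$ being unaffected; if $\nu \in \{s-2,s-1,s\}$, the constraint \eqref{inequaforE} perturbs the short range affecting $n_{s-2}$, and I sum $t n_\nu$ jointly via $(iv)$ on the range of ratio $\leqslant y$ (respectively $ty$), absorbing the extra factor $\log\log 3xyz$. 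The remaining bookkeeping of logarithmic factors is exactly as in the proof of \eqref{11:09} of Lemma \ref{22jan1}.

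The main obstacle will be verifying in all the sub-cases of \eqref{13:00} that the summation order can be chosen so that every partial sum falls under one of $(i), (i'), (i''), (ii), (iii), (iv)$, and that the accumulated short-range factors combine cleanly into $(\log 2y/\log 2w)^2$. When $\nu\in\{s-2,s-1,s\}$, one must in particular check that the jointly-summed pair $(t,n_\nu)$ interacts correctly with the perturbed constraint and with the $1/\log$ decay needed from the smallest variable $n_s$; this is a direct but attention-demanding adaptation of the case analysis \eqref{10:35}/\eqref{10:40} already used for Lemma \ref{22jan1}.
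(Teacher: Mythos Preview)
Your proposal is correct and follows essentially the same route as the paper: the bound \eqref{15:12} is simply quoted from \cite[Lemma~15]{BFI2}, and for \eqref{13:00} the paper carries out in detail only the representative case $(s,\nu)=(5,5)$, summing $n_1$ by Lemma~\ref{lemma13ofBFI2}$(i)$, $n_2$ by $(i')$, $n_3,n_4$ by $(ii)$, and the glued pair $(t,n_5)$ by $(iv)$ after pulling out $(\log 2tn_5)^{-2}\leqslant (\log 2w)^{-2}$, exactly in the spirit you describe. Two minor remarks: the dichotomy $y\leqslant x^{1/s}$ versus $y>x^{1/s}$, while harmless, is not actually needed here (unlike in Lemma~\ref{22jan1}); and your blanket prescription ``$(iii)$ in place of $(i)$'' when $\nu\leqslant s-3$ is literally accurate only for $\nu=1$, since for $\nu=2$ (or $\nu=3$ when $s=6$) the variable in question was handled by $(i')$, so the glued analogue is a short-range instance of $(iv)$ rather than $(iii)$ --- but this is only a labeling issue and does not affect the argument.
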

\begin{proof} Actually this lemma is also true for $s=4$, but we shall only use it in the cases $s=5$ or $s=6$ (see the end of \S \ref{boundaryconf}).
The upper bound \eqref{15:12} is exactly  \cite[Lemma 15]{BFI2}. The bound \eqref{13:00}  is a consequence of Lemma \ref{lemma13ofBFI2}.     Note that we pass from the conditions of summation of \eqref{15:12}  to $\mathcal E (s,\nu)$, by gluing (as we defined  after Lemma \ref{22jan1})  the variable $t$ to the variable $n_\nu$.

We now give the proof  of \eqref{13:00} in the particular case $s=\nu =5$ (in other words, this is the case  where  $t$ is glued to $n_5$)    since the other ten cases are similar.  We   write the inequality
\begin{align}
  \underset{(t,n_1,\dots, n_5)\in \mathcal E (5,5)}{\sum \cdots \sum}&\leqslant
  \sum_{w\leqslant tn_5\leqslant x^{1\over 5}} \mathfrak z (n_5) \tau_{j_5} (n_5) \sum_{tn_5\leqslant n_4\leqslant ytn_5} \mathfrak z (n_4) \tau_{j_4} (n_4)\label{sadio}\\
&\times \sum_{tn_5\leqslant n_3\leqslant ytn_5}\ \mathfrak z (n_3) \tau_{j_3} (n_3)
 \sum_{n_3\leqslant n_2\leqslant x^{4\over 5}/(n_3n_4tn_5)}\ \mathfrak z (n_2) \tau_{j_2} (n_2)\nonumber\\
& \times  \sum_{n_1\leqslant  x/(n_2n_3n_4tn_5)} \mathfrak z (n_1) \tau_{j_1} (n_1).\nonumber
 \end{align}
 By Lemma \ref{lemma13ofBFI2} $(i)$ we have
 \begin{equation}\label{sadio1}
  \sum_{n_1\leqslant  x/(n_2n_3n_4tn_5)} \mathfrak z (n_1) \tau_{j_1} (n_1) \ll {x/(n_2n_3n_4tn_5)\over \log  2x} \cdot  \Bigl({\log 2xyz\over \log 2z}\Bigr)^{j_1}.
  \end{equation}
  By Lemma  \ref{lemma13ofBFI2} $(i')$, we get
  \begin{equation}\label{sadio2}
 \sum_{n_3\leqslant n_2\leqslant x^{4\over 5}/(n_3n_4tn_5)}\ \mathfrak z (n_2) \tau_{j_2} (n_2) n_2^{-1} \ll  \Bigl({\log 2xyz\over \log 2z}\Bigr)^{j_2}.
 \end{equation}
 By   Lemma \ref{lemma13ofBFI2} $(ii)$, we have, for $i=3$ or $4$, the inequality
 \begin{equation}\label{sadio3}
  \sum_{tn_5\leqslant n_i\leqslant ytn_5}\ \mathfrak z (n_i) \tau_{j_i} (n_i)\,n_i^{-1}\ll {\log 2y\over \log 2tn_5}\cdot \Bigl({\log 2xyz\over \log 2z} \Bigl)^{j_i},
 \end{equation}
  and finally
  \begin{equation}\label{sadio4}
    \sum_{w\leqslant tn_5\leqslant x^{1\over 5}} \mathfrak z (n_5) \tau_{j_5} (n_5)(tn_5)^{-1} (\log 2tn_5)^{-2} \ll {\log x\over (\log 2w)^2}\cdot (\log \log 3xyz)\cdot \Bigl( {\log 2xyz\over \log 2z}\Bigr)^{j_5},
  \end{equation}
   by   Lemma  \ref{lemma13ofBFI2} $(iv)$ and the   lower bound
   $\log( 2tn_5)\gg \log 2w$.   Gathering  \eqref{sadio},...,\eqref{sadio4}, we deduce \eqref{13:00} in the particular case $(s,\nu)=(5,5)$.
   
   The other cases are treated similarly.
      \end{proof}

\subsection{Convolution  of  two sequences in arithmetic progressions.} \label{equidistribution}We continue to  follow the notations of \cite{BFI2}, in order to quote the necessary results from this paper. Let $f$ an arithmetic function with finite support.  We define
$$
\Vert f \Vert :=\Bigl( \sum_n \vert  f( n)\vert^2 \Bigr)^{1\over 2}.
$$
For $a$  and $q$ coprime integers, we introduce 
$$
\Delta (f;q,a):=\sum_{n\equiv a \bmod q} f( n) -{1\over \varphi (q)}\sum_{(n,q)=1} f(n).
$$
Hence $\Delta (f;q,a)$ measures the distribution of the sequence $f(n)$ in the arithmetic progression $n\equiv a \bmod q$. We shall be mainly concerned by the situation where $f$ is the   arithmetic convolution product  $f =\boldsymbol \alpha * \boldsymbol \beta$, of two  complex sequences $\boldsymbol \alpha=(\alpha_m)_  {m\sim M}$ and $\boldsymbol \beta=(\beta_n)_{n\sim N}$, with $MN =x$ say and $M,\, N > x^\eps$. (See \eqref{defsim} for the meaning of $\sim$). We shall also study  the convolution of three sequences. 

The following assumption 
is crucial in the context of dispersion technique. Let $B>0$ be a real number and $\kappa : \R\to \R$   a real function.  Now  consider the condition $(A_1 (B, \kappa)) $  concerning  $\boldsymbol \beta=(\beta_n)_{n\sim N}$

\vskip .1cm

\begin{equation*}
(A_1(B,\kappa))\ \ 
\begin{cases} \text{For any  } A>0, 
\text{ for any  integers } 
  d,\, k \geqslant 1, \ell \not= 0, (k,\ell)=1  \text{ we have }&\\ 
  \\ 
  \displaystyle \Bigl\vert \sum_{\scriptstyle n\equiv \ell \bmod k  \atop \scriptstyle (n,d) = 1} \beta_{n} -\frac{1}{\varphi(k)} \sum_{(n,dk)=1} \beta_{n}\Bigr\vert \leqslant \kappa (A)   \Vert  \boldsymbol \beta\Vert\, \tau^B(d) N^{1/2}(\log 2N)^{-A}.&\\
     \end{cases} 
\end{equation*}
\vskip .1cm   

 Of course any $(\beta_{n})_{{n\sim N}} $ satisfies $(A_{1}(B,\kappa))$ by chosing for $\kappa$ a huge function of $N$ and $A$ (for intance $\kappa (A) = (\log 2N)^A)$). This is an uninteresting case. 
The situation is quite different when 
  we deal with  sequences $(\beta_n)_{n\geqslant  1},$ which satisfy  Siegel--Walfisz  type 
  theorem (for instance the characteristic function of the set of primes). If, in that case,  we  consider 
  the truncated sequence  $\boldsymbol \beta=(\beta_n)_{n\sim N}$, then,   the condition  $A_1(B,\kappa))$ is satisfied by $\boldsymbol \beta=(\beta_n)_{n\sim N}$, but with a function  $A\mapsto \kappa (A)$ independent of $N$. Then we are in an interesting situation, on letting $N$ tend to infinity, and choosing $A$ very large, but fixed.

    We shall also frequently suppose that, on average,  the sequences are  less than a power of $\log 2n$
    by introducing, for $B>0$, the assumption
    \vskip .2cm
   $$
   \vert \beta_n\vert \leqslant B \,  \tau^B (n)\text{ for all } n\sim N.\leqno (A_2(B))
   $$
   \vskip .2cm
   Sometimes it will be asked that $\beta_n=0$ when $n$ has a small prime divisor in the following sense: let $x\geqslant  3$ be a real number and let  $A_3(x))$ be the hypothesis
   \vskip .2cm
   $$
   \beta_n \not=0 \Rightarrow \bigl\{ p\mid n \Rightarrow p  >\exp(\log x/(\log \log x)^2)\bigr\}.\leqno(A_3(x))
   $$
   \vskip .2cm
   In other words, we ask the support of $\boldsymbol \beta$ to be included in the set of quasi primes.
We shall also  sometimes work with very particular $\boldsymbol \lambda=(\lambda_\ell )_{\ell \sim L}$ satisfying 
   $$
   \text{ There exists an interval }\mathfrak L \subset [L, 2L[ \text{  and } z\geqslant  2 \text{ such that  }\boldsymbol \lambda =\mathfrak z {\mathbf 1}_{\mathfrak L}. \leqno (A_4 (z))
   $$
    
   First recall  a classical consequence of the large sieve inequality, which, after combinatorial preparations, leads  to   Proposition \ref{B--V} (Bombieri--Vinogradov Theorem).
   
   \begin{proposition}\label{largesieve}
   Let $\eps$, $x$, $B$, $M$ and $N$ be real numbers such that $\eps >0$, $B>0$, $x=MN$
and $M,\ N\geqslant \max (2, x^\eps)$.
Let $\kappa\,:\, \R \to \R$ be a real function.  Let $\boldsymbol \alpha =(\alpha_m)_{m\sim M}$, $\boldsymbol \beta =(\beta_n)_{n\sim N}$ be two complex  sequences  
such that $\boldsymbol \beta$ satisfies $(A_1(B,\kappa))$. Then, for every $C>0$, there exists $A_0$, depending only on $B$  and $C$ such that the following
    inequality holds
       $$
     \sum_{q\leqslant  x^{1\over 2}\mathcal L^{-A_0} }\ \max_{(a,q)=1}\, \bigl\vert \,\Delta (\boldsymbol \alpha *\boldsymbol \beta;q,a)\, \bigr\vert \ll \Vert \boldsymbol \alpha \Vert\,  \Vert \boldsymbol \beta \Vert\, x^{1\over 2} \, \mathcal L^{-C},\ 
        $$
 where   the constant implied in the $\ll$--symbol depends at most on $\eps$, $\kappa$,   $B$ and $C$. 
   \end{proposition}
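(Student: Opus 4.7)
The plan is to implement the classical large-sieve-based proof of generalized Bombieri--Vinogradov estimates for convolutions, in which the hypothesis $(A_1(B,\kappa))$ on $\boldsymbol\beta$ plays the role played by Siegel--Walfisz for $\Lambda$. First I would expand $\Delta$ using the orthogonality of Dirichlet characters: for $(a,q)=1$,
\[
\Delta(\boldsymbol\alpha*\boldsymbol\beta;q,a)=\frac{1}{\varphi(q)}\sum_{\chi\neq\chi_0\bmod q}\bar\chi(a)\,A(\chi)B(\chi),
\]
with $A(\chi):=\sum_m\alpha_m\chi(m)$ and $B(\chi):=\sum_n\beta_n\chi(n)$. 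Since $|\bar\chi(a)|\leqslant 1$, the task reduces to estimating $\sum_{q\leqslant Q}\varphi(q)^{-1}\sum_{\chi\neq\chi_0}|A(\chi)B(\chi)|$ with $Q=x^{1/2}\mathcal L^{-A_0}$. I would then pass to primitive characters: every non-principal $\chi\bmod q$ is induced by a unique primitive $\chi^*\bmod q^*$ with $1<q^*\mid q$; using $\sum_{q\leqslant Q,\,q^*\mid q}\varphi(q)^{-1}\ll\mathcal L/\varphi(q^*)$ and absorbing the coprimality corrections relating $A(\chi)$ to $A(\chi^*)$ into divisor-type logarithmic losses, the problem reduces to bounding
\[
\mathcal S:=\sum_{1<q^*\leqslant Q}\frac{1}{\varphi(q^*)}\sum^{*}_{\chi^*\bmod q^*}|A(\chi^*)B(\chi^*)|,
\]
where $\sum^{*}$ denotes summation over primitive characters.

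I would split $\mathcal S$ at a threshold $Q_0=\mathcal L^{D}$, with $D$ to be fixed at the end. For small conductors $q^*\leqslant Q_0$ I would invoke the hypothesis $(A_1(B,\kappa))$: orthogonality gives $|B(\chi^*)|\leqslant\varphi(q^*)\max_r|\tilde\beta(r)-c(q^*)|$ for every non-trivial primitive $\chi^*\bmod q^*$, where $\tilde\beta(r)=\sum_{n\equiv r\bmod q^*}\beta_n$, and $(A_1(B,\kappa))$ with $k=q^*$, $d=1$ controls this maximum by $\kappa(A)\|\boldsymbol\beta\|N^{1/2}(\log 2N)^{-A}$. Coupling the resulting pointwise bound $|B(\chi^*)|\leqslant\varphi(q^*)\kappa(A)\|\boldsymbol\beta\|N^{1/2}\mathcal L^{-A}$ (using $\log 2N\gg_\eps\mathcal L$ from $N\geqslant x^\eps$) with Cauchy--Schwarz and the large sieve applied to $\sum^{*}_{\chi^*}|A(\chi^*)|$, this range contributes at most $\kappa(A)\,\mathcal L^{O(D)-A}\|\boldsymbol\alpha\|\|\boldsymbol\beta\|x^{1/2}$, which is acceptable once $A$ is taken large enough in terms of $C$ and $D$.

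For large conductors $Q_0<q^*\leqslant Q$ I would drop $(A_1)$ and rely on the bilinear structure alone. Cauchy--Schwarz gives
\[
\sum_{Q_0<q^*\leqslant Q}\frac{1}{\varphi(q^*)}\sum^{*}_{\chi^*}|A(\chi^*)B(\chi^*)|\leqslant\mathcal T_\alpha^{1/2}\,\mathcal T_\beta^{1/2},
\]
where $\mathcal T_\gamma:=\sum_{Q_0<q^*\leqslant Q}\varphi(q^*)^{-1}\sum^{*}_{\chi^*}|\Gamma(\chi^*)|^2$. A dyadic decomposition $q^*\sim K$ combined with the multiplicative large sieve $\sum_{q^*\leqslant 2K}(q^*/\varphi(q^*))\sum^{*}_{\chi^*}|A(\chi^*)|^2\ll(M+K^2)\|\boldsymbol\alpha\|^2$ yields, after summing dyadic $K\in[Q_0,Q]$, the bound $\mathcal T_\alpha\ll\mathcal L\,(M/Q_0+Q)\|\boldsymbol\alpha\|^2$ and, symmetrically, $\mathcal T_\beta\ll\mathcal L\,(N/Q_0+Q)\|\boldsymbol\beta\|^2$. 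Exploiting $MN=x$, $Q^2\leqslant x\mathcal L^{-2A_0}$ and the density consequence $M+N\leqslant 2x^{1-\eps}$ of $M,N\geqslant x^\eps$ with $MN=x$, one verifies that $\sqrt{MN/Q_0^2+(M+N)Q/Q_0+Q^2}\ll x^{1/2}\mathcal L^{-C}$ as soon as $D$ and $A_0$ are chosen sufficiently large in terms of $\eps$, $B$ and $C$.

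The main obstacle is the control of the cross-term $(M+N)Q/Q_0$ arising in the large-conductor range: it is precisely there that the density assumption $M,N\geqslant x^\eps$ enters, via $M+N\leqslant 2x^{1-\eps}$, and it dictates the dependence of the implied constant (and of $A_0$) on $\eps$. The reconciliation of the character-expansion weight $\varphi(q)^{-1}$ with the large-sieve weight $q^*/\varphi(q^*)$, achieved via the primitive-character reduction together with the dyadic decomposition, is otherwise routine bookkeeping, and the final choices of $A_0=A_0(\eps,B,C)$, $D$ and $A$ are then arranged so that every accumulated logarithmic loss is absorbed by the target factor $\mathcal L^{-C}$.
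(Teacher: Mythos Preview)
Your overall strategy is exactly the standard one (which the paper itself does not reproduce, treating the proposition as classical): character expansion, reduction to primitive characters, a threshold $Q_0=\mathcal L^{D}$, the hypothesis $(A_1(B,\kappa))$ for conductors $q^*\leqslant Q_0$, and the multiplicative large sieve for $q^*>Q_0$. The small-conductor treatment is fine.

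There is, however, a genuine gap in your large-conductor estimate. You apply Cauchy--Schwarz \emph{globally} over $Q_0<q^*\leqslant Q$, obtaining $\mathcal T_\alpha^{1/2}\mathcal T_\beta^{1/2}$ with $\mathcal T_\alpha\ll\mathcal L(M/Q_0+Q)\|\boldsymbol\alpha\|^2$ and the analogous bound for $\mathcal T_\beta$. The product then contains the cross-term $(M+N)Q/Q_0$, and you claim it is $\ll x\mathcal L^{-2C}$ once $D,A_0$ are large enough. But with $M+N\leqslant 2x^{1-\eps}$ and $Q\leqslant x^{1/2}\mathcal L^{-A_0}$ this term is $\ll x^{3/2-\eps}\mathcal L^{-A_0-D}$, and for any fixed $A_0,D$ the inequality $x^{1/2-\eps}\ll\mathcal L^{A_0+D-2C}$ \emph{fails} for all large $x$ whenever $\eps<1/2$. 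No polylogarithmic saving can beat a positive power of $x$.

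The fix is to apply Cauchy--Schwarz \emph{inside each dyadic block} $q^*\sim K$ rather than globally. For each $K$ one gets
\[
\sum_{q^*\sim K}\frac{1}{\varphi(q^*)}\sum_{\chi^*}{}^{*}\,|A(\chi^*)B(\chi^*)|
\ll \frac{1}{K}(K^2+M)^{1/2}(K^2+N)^{1/2}\|\boldsymbol\alpha\|\|\boldsymbol\beta\|
\leqslant\Bigl(K+M^{1/2}+N^{1/2}+\frac{x^{1/2}}{K}\Bigr)\|\boldsymbol\alpha\|\|\boldsymbol\beta\|,
\]
using $(K^2+M)^{1/2}\leqslant K+M^{1/2}$. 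Summing over dyadic $K\in(Q_0,Q]$ yields
\[
\ll\Bigl(Q+(M^{1/2}+N^{1/2})\mathcal L+\frac{x^{1/2}}{Q_0}\Bigr)\|\boldsymbol\alpha\|\|\boldsymbol\beta\|.
\]
Now the middle term is $\ll\mathcal L\,x^{(1-\eps)/2}=x^{1/2}\cdot\mathcal L\,x^{-\eps/2}\ll_\eps x^{1/2}\mathcal L^{-C}$, which is acceptable (the $\eps$-dependence enters only the implied constant, as the statement allows). The other two terms are handled by choosing $A_0$ and $D$ large in terms of $C$ alone, matching the claimed dependence of $A_0$. With this correction your argument goes through.
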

 However, Proposition \ref{largesieve}  says nothing when $Q\asymp x^{1\over 2}$. We now
 recall several situations, when $Q$ ({\it level of distribution}) can be taken greater than $x^{1\over 2}$. The relative sizes of the factors of the convolution are crucial to allow to go beyond $x^{1\over 2}\mathcal L^{-A_0}$, which is the natural limit of the large sieve. 
 
 The first situation is

   \begin{proposition}\label{17:09} Let $a\not=0$ be an integer. Let $\eps$, $x$, $B$, $M$ and $N$ be real numbers such that $\eps >0$, $B>0$, $x=MN$
and $M,\ N\geqslant \max (2, x^\eps)$.
Let $\kappa\,:\, \R \to \R$ be a real function.
   Let $\boldsymbol \alpha =(\alpha_m)_{m\sim M}$, $\boldsymbol \beta =(\beta_n)_{n\sim N}$ be two  complex sequences such that  $\boldsymbol \beta$ satisfies $(A_1(B,\kappa))$, $(A_2(B))$ and $(A_3 (x))$.   
   
   Then for every $C>0$, we have
      $$
  \sum_{q\sim Q\atop (q,a)=1} \vert \Delta (\boldsymbol \alpha *\boldsymbol \beta;q,a)\vert \ll \Vert \boldsymbol \alpha \Vert\,  \Vert \boldsymbol \beta \Vert \,x^{1\over 2}\, \mathcal L^{-C}, 
   $$
   uniformly for
   $$
   x^{\eps -1} Q^2 < N < x^{{5\over 6}-\eps} Q^{-{4\over 3}},
   $$
 where   the constant implied in the $\ll$--symbol depends at most on $\eps$, $\kappa$,  $a$, $B$ and $C$.    
   \end{proposition}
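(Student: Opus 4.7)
The plan is to apply Linnik's dispersion method in the style of Bombieri--Friedlander--Iwaniec \cite{BFI2}. Introduce unimodular weights $\xi_q$ (with $|\xi_q|=1$) so that $\xi_q\Delta(\boldsymbol\alpha *\boldsymbol\beta;q,a)=|\Delta(\boldsymbol\alpha *\boldsymbol\beta;q,a)|$, and rewrite the target sum as $\sum_{m\sim M}\alpha_m W(m)$, where
\begin{align*}
W(m)=\sum_{\substack{q\sim Q\\(q,ma)=1}}\xi_q\biggl(\sum_{\substack{n\sim N\\ n\equiv a\overline{m}\bmod q}}\beta_n-\frac{1}{\varphi(q)}\sum_{\substack{n\sim N\\(n,q)=1}}\beta_n\biggr).
\end{align*}
An application of Cauchy--Schwarz in $m$ then gives
\begin{align*}
\Bigl(\sum_{\substack{q\sim Q\\(q,a)=1}}|\Delta(\boldsymbol\alpha *\boldsymbol\beta;q,a)|\Bigr)^2\leqslant \Vert\boldsymbol\alpha\Vert^2\cdot \sum_{m\sim M}|W(m)|^2,
\end{align*}
reducing the problem to the dispersion estimate $\sum_{m\sim M}|W(m)|^2\ll \Vert\boldsymbol\beta\Vert^2 x\mathcal L^{-2C}$.

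Opening the square produces three families of terms: a dispersion sum $\mathfrak D$ involving $\#\{m\sim M:\ n_1\equiv a\overline m\bmod q_1,\ n_2\equiv a\overline m\bmod q_2\}$, two mixed cross terms, and a ``main--main'' term built from $\varphi(q_1)^{-1}\varphi(q_2)^{-1}$. Hypothesis $(A_1(B,\kappa))$ evaluates the mixed and main--main pieces up to an error of size $\Vert\boldsymbol\beta\Vert^2 x\mathcal L^{-C}$. In $\mathfrak D$ I would isolate the diagonal $n_1=n_2$, whose contribution is controlled using $(A_2(B))$, the quasi-prime condition $(A_3(x))$, and Lemma \ref{linnik1}; this step is admissible precisely when $N>x^{\eps-1}Q^2$, which is the origin of the lower bound on $N$ in the statement.

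The off-diagonal contribution is the heart of the matter. Poisson summation in $m$ converts the pair of congruences $n_i\equiv a\overline m\bmod q_i$ into a sum over frequencies $h$: the zero frequency gives back the expected count and cancels the mixed/main--main pieces up to an admissible error (again tracked by $(A_1(B,\kappa))$), while the non-zero frequencies produce, after writing $q_0=(q_1,q_2)$, a sum of type
\begin{align*}
\sum_{q_1,q_2\sim Q}\sum_{n_1,n_2}\beta_{n_1}\overline{\beta_{n_2}}\sum_{h\neq 0}\widehat\phi\Bigl(\tfrac{hM q_0}{q_1 q_2}\Bigr)\,e\Bigl(\tfrac{a h\,\overline{n_1-n_2}}{q_1q_2/q_0}\Bigr),
\end{align*}
with $\phi$ a smooth cut-off supported near $M$. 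The outer average over $(q_1,q_2)$ of such incomplete Kloosterman sums is estimated by invoking the Deshouillers--Iwaniec bounds for sums of Kloosterman sums with moving denominators \cite{DeIw}, exactly as in \cite[Main Theorem]{BFI2}. Balancing the resulting bound against $\Vert\boldsymbol\beta\Vert^2 x$ yields admissibility precisely in the range $N<x^{5/6-\eps}Q^{-4/3}$, which is the analytic source of the upper restriction on $N$.

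The main obstacle is this Kloosterman step. Before one can apply Deshouillers--Iwaniec, the exponential sum must be massaged so that $n_1,n_2$ appear only through $n_1-n_2$ modulo $q_1q_2/q_0$, the several coprimality constraints linking $n_1,n_2,m,a$ and the moduli $q_1,q_2$ must be separated, and the resulting weights smoothed by standard Fourier devices. This coprimality book-keeping is where the quasi-prime hypothesis $(A_3(x))$ is indispensable: it allows one to drop, at negligible cost, conditions of the form $(n_i,q_j)=1$, since every prime factor of an admissible $n_i$ exceeds $\exp(\log x/(\log\log x)^2)$ and is therefore too large to divide a modulus of size $<2Q$ in any but a sparse set of configurations. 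Finally, collecting all the pieces one controls the divisor-type averages that appear through Lemmas \ref{lemma13ofBFI2} and \ref{22jan1}, and calls on $(A_1(B,\kappa))$ one last time to secure a saving of $\mathcal L^{-C}$ for every prescribed $C>0$.
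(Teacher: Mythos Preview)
The paper does not give its own proof of this proposition: it simply records it as a known result, remarking that the first (and stronger) version is \cite[Th\'eor\`eme~1]{Fo1}, with later proofs in \cite[Theorem~3]{BFI1} and \cite[Theorem~1]{BFI2}. Your sketch is therefore not competing with an argument in the present paper but rather reconstructing the dispersion proof from those references, and at the level of strategy you have it right: open the absolute value with unimodular coefficients, apply Cauchy--Schwarz over the long variable $m$, expand the square, match the main terms via $(A_1(B,\kappa))$, and treat the genuine off-diagonal by Poisson in $m$ followed by the Deshouillers--Iwaniec machinery.

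Two points of caution on the details. First, the exponential you write after Poisson, $e\bigl(ah\,\overline{n_1-n_2}/(q_1q_2/q_0)\bigr)$, is not the shape that actually arises: the congruences $m\equiv a\overline{n_1}\bmod q_1$, $m\equiv a\overline{n_2}\bmod q_2$ produce, after CRT and completion, phases involving $\overline{n_1}$ and $\overline{n_2}$ modulo distinct pieces of $q_1q_2/q_0$, not an inverse of $n_1-n_2$; getting this structure right is exactly what allows one to recognise genuine Kloosterman sums and to feed them into \cite{DeIw}. Second, Lemmas~\ref{lemma13ofBFI2} and~\ref{22jan1} of the present paper are bespoke divisor-sum estimates designed for the proof of Theorem~\ref{extension} here and play no role in the proof of this proposition; the divisor averages that arise in the dispersion argument are handled already inside \cite{BFI1,BFI2}. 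With these corrections your outline matches the cited proofs.
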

The first and stronger  version of Proposition \ref{17:09} can be found  in  \cite[Th\' eor\`eme 1]{Fo1} (without  the restriction $(A_3(x))$). A new proof is given  
in \cite[Theorem 3]{BFI1} and it appears again as \cite[Theorem 1]{BFI2}.  It is obvious  that we can take $Q\asymp x^{1\over 2}$
as soon as $N$ satisfies $x^\eps < N < x^{{1\over 6}-\eps}.$ This result is quite convenient for applications.
  
We shall also use the following  result 
  which is one of the key ingredient in the proof of Proposition \ref{fouv1}. 
  \begin{proposition}\label{SAM1}  (\cite[Theorem 2]{BFI2})  Let $a\not= 0$ be an integer.  
  Let $\eps$, $x$,  $y_1$, $y_2$, $B$, $C$,  $N$  and $Q$ be real numbers such that $\eps >0$, $B>0$, $C>0$,   $y_2>y_1 >0$, $x \geqslant 1$, $x^\eps \leqslant N \leqslant x^{{1\over 3}-\eps}$
  and $1\leqslant Q \leqslant x^{1-\eps}$.
Let $\kappa\,:\, \R \to \R$ be a real function.
   Let   $\boldsymbol \beta =(\beta_n)_{n\sim N}$ be a  complex sequence  such that  $\boldsymbol \beta$ satisfies $(A_1(B,\kappa))$  and $(A_3 (x))$.

  \noindent Then,  for every double sequence    $\boldsymbol \xi =\xi (\ell,m)$     of  complex numbers, we   have the inequality 
$$
\sum_{q\sim Q\atop (q,a) =1}\Bigl(\ 
\underset
{\substack{\ell m n\sim x,\ n\sim N\\
y_1< m/n<y_2\\
\ell mn \equiv a \bmod q}}
{\sum \sum \sum}
\  \xi (\ell, m) \beta_n -{1\over \varphi (q)}
\underset
{\substack{\ell m n\sim x,\ n\sim N\\
y_1< m/n<y_2\\
(\ell mn ,q)=1}}
{\sum \sum \sum}
\  \xi(\ell, m) \beta_n\ \Bigr)
\ll \Vert \boldsymbol \xi\Vert\ \Vert \boldsymbol \beta \Vert \ x^{1\over 2} \, \mathcal L^{-C},
$$
 where  the constant implied in $\ll$ depends at most on 
$\eps$, $\kappa$, $a$, $B$ and $C$, and where
$$
\Vert \boldsymbol \xi\Vert =\Bigl(\underset{x/2N\leqslant \ell m\leqslant 2x/N}{\sum\ \sum}\vert \xi (\ell, m)\vert^2\Bigr)^\frac{1}{2}.
$$
\end{proposition}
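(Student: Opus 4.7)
The proof strategy is Linnik's dispersion method in the form of \cite{BFI2}, treating the pair $(\ell,m)$ as a single composite ``smooth'' variable $k=\ell m$ in the range $x/N\ll k\ll x/N$. First I would dualize the $q$-sum against a bounded sequence $(\epsilon_q)_{q\sim Q}$ with $|\epsilon_q|\le 1$, and apply the Cauchy--Schwarz inequality in $k$ (after reorganizing the cut-off $y_1<m/n<y_2$ via a standard smooth partition of unity, which contributes at most harmless logarithmic factors). This extracts the factor $\Vert\boldsymbol\xi\Vert$ and reduces the problem to bounding
$$
\mathcal D \;=\;\sum_{k}\Bigl|\sum_{\substack{q\sim Q\\ (q,a)=1}}\epsilon_q\Bigl(\sum_{\substack{n\sim N\\ n\equiv a\bar k\bmod q}}\beta_n\,-\,\frac{1}{\varphi(q)}\sum_{\substack{n\sim N\\(n,q)=1}}\beta_n\Bigr)\Bigr|^2,
$$
by a quantity of size $\Vert\boldsymbol\beta\Vert^2\,x\,\mathcal L^{-2C}$.

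Next I would expand the inner square over pairs $(q_1,q_2)$, producing the three classical dispersion pieces of \cite{BFI2}: the doubly-congruential term, the two cross terms in which exactly one inner sum is replaced by its expected main term, and the fully-averaged term. The last three terms collapse to $O(x\,\mathcal L^{-2C})$ thanks to the Siegel--Walfisz hypothesis $(A_1(B,\kappa))$ applied to $\boldsymbol\beta$, together with the divisor-sum bounds of Lemmas \ref{linnik1}--\ref{15:10} which keep the auxiliary combinatorial weights under control. For the doubly-congruential piece, I would detect the joint congruence $n\equiv a\bar k\bmod [q_1,q_2]$ by Poisson summation in $n$, which unfolds it into a weighted sum of incomplete Kloosterman sums with composite moduli $q_1q_2/(q_1,q_2)$; then invoke the Deshouillers--Iwaniec bounds \cite{DeIw} on averages of Kloosterman sums with moving denominators. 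This is the mechanism that breaks past the large-sieve barrier $Q\le x^{1/2}$ and allows $Q$ as large as $x^{1-\eps}$.

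The hypothesis $(A_3(x))$ is used, as in \cite{BFI2}, to ensure that the coprimality conditions $(n,q_1q_2)=1$ can be imposed essentially for free and to suppress pairs $(q_1,q_2)$ with anomalously large gcd. The main obstacle is precisely the off-diagonal Kloosterman step: extracting an $\mathcal L^{-C}$ saving (rather than a mere power of $x$) requires careful bookkeeping of the spectral main terms coming from Kuznetsov's formula embedded in the Deshouillers--Iwaniec estimate, and the whole argument works only because the restrictions $x^\eps\le N\le x^{1/3-\eps}$ and $Q\le x^{1-\eps}$ place the Kloosterman averages in a range where those spectral bounds are effective. Taking the square root of the bound on $\mathcal D$ and reassembling with the $\Vert\boldsymbol\xi\Vert$ factor from Cauchy--Schwarz yields the claimed estimate.
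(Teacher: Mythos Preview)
The paper does not give its own proof of this proposition; it is quoted verbatim as \cite[Theorem 2]{BFI2}. So the relevant comparison is between your sketch and the actual argument in \cite{BFI2}.

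Your outline names the right machinery (Cauchy--Schwarz, dispersion expansion, Siegel--Walfisz for the cross terms, Deshouillers--Iwaniec for the Kloosterman averages), but two of the concrete steps are set up in a way that would not go through.

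First, the dualization step is misleading. The distinguishing feature of this proposition, as opposed to Proposition~\ref{17:09}, is precisely that the $q$--sum carries \emph{no} absolute value. Inserting arbitrary weights $|\epsilon_q|\le 1$ amounts to proving the version with absolute values, and that version is \emph{not} known for $Q$ as large as $x^{1-\eps}$. The proof in \cite{BFI2} keeps the $q$--sum genuinely free all the way to the stage where Kloosterman sums $S(*,*;q)$ appear, so that the Deshouillers--Iwaniec bounds on $\sum_q g(q)S(*,*;q)$ with a \emph{smooth} outer weight $g$ can be invoked. If you allow arbitrary $\epsilon_q$, this averaging in the denominator is destroyed and the spectral input from \cite{DeIw} no longer applies.

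Second, the Poisson step is applied to the wrong variable. After your Cauchy--Schwarz in $k=(\ell,m)$ and expansion over $(q_1,q_2)$, the doubly--congruential piece involves \emph{two} independent variables $n_1,n_2$, each satisfying its own congruence $n_i\equiv a\bar k\bmod q_i$; there is no single ``joint congruence $n\equiv a\bar k\bmod[q_1,q_2]$'' except on the diagonal $n_1=n_2$. More importantly, $n\sim N\le x^{1/3-\eps}$ is the short variable carrying the arithmetic weights $\beta_n$, so Poisson summation in $n$ is neither available (weights) nor effective (length). In \cite{BFI2} the completion/Poisson step is carried out in the long smooth variable (here the $\ell$--part of $k$, of length $\gg x^{1/3+\eps}$), which is what produces Kloosterman sums with moving denominator $q$ and makes the Deshouillers--Iwaniec input relevant.
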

 Note that we are summing the error terms without absolute values, this is why the level of distribution  $Q$ can be taken so large. If we fix $y_1=0$ and $y_2 =\infty$ and  define $\boldsymbol \alpha$
 by the formula $\alpha_k =\sum_{ \ell m= k} \xi(\ell, m)$, Proposition \ref{SAM1} deals with the convolution $\boldsymbol \alpha * \boldsymbol \beta$.

\subsection{Convolution  of  three sequences in arithmetic progressions.}\label{convolof3}
The second type of  results concerns the convolution of three sequences 
\begin{equation}\label{12:08}
\begin{cases}
\boldsymbol \eta=(\eta_k)_{k\sim K},\   \boldsymbol \lambda =(\lambda_\ell)_{\ell \sim L},\   \boldsymbol \alpha =(\alpha_m)_{m\sim M}, \\
\\
 x=KLM,\ \mathcal L =\log 2x, \text{ with }  K,\, L,\,M\geqslant 1.
 \end{cases}
\end{equation}
We have
\begin{proposition}\label{trilinear}
Let $\eps $, $B$ and $C$ be given positive real numbers. Let $a\not= 0$ be an integer. Let $\kappa\,:\,\R\to \R$ be a real function. Let $x$, $K$, $L$, $M$ be real numbers and   $\boldsymbol \eta$, $\boldsymbol \lambda $ and $\boldsymbol \alpha$ be three sequences as in \eqref{12:08}.  Furthermore, suppose that the  following  conditions are satisfied
\vskip .2cm
\noindent  $\bullet$ $K$, $L$, $M\geqslant  x^\eps$,
\vskip .2cm
\noindent  $\bullet$ $\boldsymbol \eta$ satisfies $(A_2(B))$ and $(A_3(x))$,
\vskip .2cm
\noindent  $\bullet$ $\boldsymbol \lambda$ satisfies $(A_1(B,\kappa))$, $(A_2(B))$ and $(A_3(x))$,
 \vskip .2cm
\noindent  $\bullet$ $\boldsymbol \alpha$ satisfies   $(A_2(B))$.
\vskip .2cm
Then, there exists $A_0$, depending only on $B$ and $C$, such that  the  following inequality
\begin{equation}\label{15:24}
\sum_{q\sim Q\atop (q,a)=1}\  \bigl\vert 
\Delta (\boldsymbol \eta * \boldsymbol \lambda *\boldsymbol \alpha; q,a)
\bigr\vert \ll x \mathcal L^{-C}.
\end{equation}
holds as soon as one of two sets of inequalities is verified
$$
Q\mathcal L^{A_0} < KL,\ K^2L^3 < Qx \mathcal L^{-A_0}\text{ and } K^4L^2 (K+L) < x^{2-\eps},
\leqno (S1)
$$
or 
$$
Q\mathcal L^{A_0} < KL,\ K L^2Q^2 < x^2 \mathcal L^{-A_0}\text{ and } K^2x^\eps< Q.
\leqno (S2)
$$
The constant implied in the $\ll$--symbol of  \eqref{15:24} depends at most on $\eps$,  $\kappa$, $a$, $B$ and~$C$.
\end{proposition}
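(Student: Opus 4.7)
The plan is to reduce the triple convolution to a double convolution and invoke the bilinear theory of \S \ref{equidistribution}, in particular Propositions \ref{17:09} and \ref{SAM1}. The essential point is that only $\boldsymbol \lambda$ carries the Siegel--Walfisz hypothesis $(A_1(B,\kappa))$, so any regrouping must keep that property alive. In both cases (S1) and (S2), the assumption $Q\mathcal L^{A_0} < KL$ is precisely what pushes the problem past the large--sieve threshold of Proposition \ref{largesieve}, so a dispersion/Kloosterman input in the style of \cite{BFI2} is mandatory.

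First I would treat (S1). Combine $\boldsymbol \eta$ and $\boldsymbol \lambda$ into $\boldsymbol \gamma = \boldsymbol \eta * \boldsymbol \lambda$, supported near $r \sim KL$, and write the triple convolution as $\boldsymbol \gamma * \boldsymbol \alpha$. Because $\boldsymbol \eta$ satisfies $(A_2(B))$ and the support condition $(A_3(x))$, the coefficients $\gamma_r = \sum_{k\ell = r} \eta_k \lambda_\ell$ inherit from $\boldsymbol \lambda$ a Siegel--Walfisz bound of the form $(A_1(B', \kappa'))$ with $B', \kappa'$ depending only on $B$ and $\kappa$, the sifting condition $(A_3(x))$ is preserved, and $\Vert \boldsymbol \gamma \Vert^2 \ll KL \cdot \mathcal L^{O(1)}$ by Lemma \ref{linnik1}. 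The three inequalities of (S1) are tailored so that $N := KL$ falls in the admissible range $x^{\eps - 1} Q^2 < N < x^{5/6 - \eps} Q^{-4/3}$ of Proposition \ref{17:09}: the lower bound follows from $Q\mathcal L^{A_0} < KL$ together with $K^2 L^3 < Q x \mathcal L^{-A_0}$, and the upper bound is an algebraic consequence of $K^4 L^2 (K + L) < x^{2-\eps}$ after using $x = KLM$. Applying Proposition \ref{17:09} to the double convolution $\boldsymbol \alpha * \boldsymbol \gamma$ then gives \eqref{15:24}.

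For (S2) the regrouping is reversed. The hypothesis $K^2 x^\eps < Q$ says that $\boldsymbol \eta$ is short compared with $\sqrt Q$, and one may open the sum by fixing $k \sim K$, so that for each such $k$ the contribution reduces to a double convolution $\boldsymbol \lambda * \boldsymbol \alpha$ in the residue class $a \overline{k} \bmod q$ (with $(k,q)=1$). To this inner double convolution one applies an estimate in the spirit of Proposition \ref{SAM1}, where $\boldsymbol \lambda$ again plays the role of the clean factor carrying $(A_1(B,\kappa))$. Summing over $k$ at the end, the loss of a factor at most $K$ is absorbed by $K L^2 Q^2 < x^2 \mathcal L^{-A_0}$, which is exactly the shape of the Cauchy--Schwarz dispersion bound in this regime; the condition $(A_3)$ on $\boldsymbol \eta$ and $\boldsymbol \lambda$ prevents divisor losses at both ends.

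The main obstacle will be the bookkeeping required to propagate $(A_1(B,\kappa))$ through the convolution $\boldsymbol \gamma = \boldsymbol \eta * \boldsymbol \lambda$ (in (S1)) and through the decoupling of the $k$--sum (in (S2)), uniformly in $K, L, M$ with losses no worse than $\mathcal L^{O(1)}$, for which the divisor bounds of \S \ref{divisorfunctions} are exactly the right input. Once this bookkeeping is done, Proposition \ref{trilinear} reduces to two applications of the already established bilinear estimates, with the Deshouillers--Iwaniec input of \cite{DeIw} entering only inside the proofs of Propositions \ref{17:09} and \ref{SAM1}, not directly in the reduction step.
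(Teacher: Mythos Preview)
Your reduction plan does not go through, and in fact the paper does not prove this proposition at all: it simply cites \cite[Theorems 3 \& 4]{BFI2} for the two cases $(S1)$ and $(S2)$, adding only the remark that the hypothesis $(A_3(x))$ on $\boldsymbol\alpha$ is unnecessary because the Cauchy--Schwarz step \cite[(4.3)]{BFI2} uses nothing about $\boldsymbol\alpha$ beyond $(A_2(B))$. So the intended ``proof'' is a citation, and the actual argument in \cite{BFI2} is a direct dispersion computation, not a reduction to the bilinear Propositions \ref{17:09} or \ref{SAM1}.

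The specific gap in your $(S1)$ argument is the range check. You want to apply Proposition \ref{17:09} with $N:=KL$, which requires $KL<x^{5/6-\eps}Q^{-4/3}$. In the applications of the paper (see \S\ref{interior}, Cases 3.1, 3.2, 4.2.1) one has $Q\asymp x^{1/2}$ and simultaneously $KL>Q\mathcal L^{A_0}$, hence $KL\gg x^{1/2}$; but for $Q\asymp x^{1/2}$ the upper constraint of Proposition \ref{17:09} forces $N<x^{1/6-\eps}$. These are incompatible, so the bilinear window is missed by a wide margin. Your claim that the upper bound ``is an algebraic consequence of $K^4L^2(K+L)<x^{2-\eps}$'' cannot be right: the only upper bound on $Q$ available in $(S1)$ is $Q<KL\mathcal L^{-A_0}$, and inserting it gives $(KL)Q^{4/3}<(KL)^{7/3}$, which would need $KL<x^{5/14-\eps'}$, again contradicting $KL>Q\asymp x^{1/2}$. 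The whole point of \cite[Theorem 3]{BFI2} is precisely to cover the regime $KL\approx Q$ that lies outside the bilinear range; this is done by opening the dispersion of $\boldsymbol\alpha$ and exploiting the extra structure of the triple product, not by collapsing to $\boldsymbol\gamma*\boldsymbol\alpha$.

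For $(S2)$ your idea of fixing $k\sim K$ and reducing to a bilinear problem in the class $a\overline{k}\bmod q$ also runs into trouble: Propositions \ref{17:09} and \ref{SAM1} are stated for a fixed residue $a$, with implied constants depending on $a$, so one cannot sum their conclusions over $k$ (the class $a\overline{k}$ varies with both $k$ and $q$). Again, in \cite[Theorem 4]{BFI2} the variable $k$ is kept inside the dispersion, and the condition $K^2x^\eps<Q$ is what makes the diagonal $k_1=k_2$ acceptable after Cauchy--Schwarz; it is not a device for pulling $k$ outside the sum over $q$.
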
 
The conditions $(S1)$ correspond to  \cite[Theorem 3]{BFI2}, and the set $(S2)$ to \cite[Theorem 4]{BFI2}. Note that in the original statement of \cite[Theorems 3 \& 4]{BFI2}, the sequence $\boldsymbol \alpha$ is supposed to satisfy $(A_3(x))$.  Actually, this restriction is unnecessary, since the proof of \cite[Formula (4.3)]{BFI2}, based on Cauchy--Schwarz inequality  does not require such a condition.

Note that if in $(S1)$ or $(S2)$ the factor $\mathcal L^{A_0}$ was replaced by the larger factor $x^{\eps }$,    Proposition \ref{trilinear} would be too weak for the proof of Proposition \ref{Q=X^1/2} and Theorem \ref{extension}. This is the reason why we cannot appeal to \cite[Theorem 4]{BFI1}, which also deals with the convolution of three sequences.

\subsection{Other types of results on the convolution of   three sequences.   }\label{convolof3fouvry}  The condition $(A_3(x))$ that must satisfy  $\boldsymbol \lambda$ in Proposition \ref{trilinear} is rather annoying in the application that we have in mind.  It could certainly be removed by  writing with great care the original proof of Theorems 3 \& 4 of \cite{BFI2}.  We prefer to modify the proof of the     following  result of Fouvry \cite[Th\' eor\`eme 2]{Fo0}.
\begin{proposition}\label{fouvryoriginal}  Let $a$ be an integer.  Let $\kappa\,:\, \R \to \R$ be a real function. Let $\eps$,   $x$, $C$,  $L$,
$M$, $N$ be real numbers such that : $\eps$ and $C>0$, $L$, $M$ and $N\geqslant 1$, $x=LMN$,  $0<\vert a \vert \leqslant x$ and such that
$$
L^2N\leqslant M^{2-\eps},\ L^3N^4 \leqslant M^{4-\eps} \text{ and } \log N \geqslant \eps \log M.\leqno(S3)
$$
Let $\boldsymbol \alpha$, $\boldsymbol \beta$ and $\boldsymbol \lambda$ be the characteristic functions of three sets of integers respectively included in $[M, 2M[$, $[N,2N[ $ and $[L,2L[$. Suppose that
$\boldsymbol \beta$ satisfies
$$
\Bigl\vert \sum_{n\equiv b \bmod q}\beta_n -{1\over \varphi (q)} \sum_{(n,q)=1} \beta_n \Bigr\vert \leqslant  \kappa (A) \Bigl( \sum_n \vert\,\beta_n\,\vert \Bigr) \, (\log 2N)^{-A},
$$
for every  real $A$ and for every integers $b$ and $q$ such that $(b,q)=1$.
Then we have the inequality
$$ \sum_{q\leqslant  (LN)^{1-\eps}\atop (q,a)=1} \ \Bigl\vert\,  \Delta(\boldsymbol \alpha *\boldsymbol \beta *\boldsymbol \lambda ; q,a)\, \Bigr\vert
 \ll  x\, \mathcal L^{-C},
 $$
 where the constant implied in the $\ll$--symbol depends at most on $\eps$, $\kappa$ and  $C$.     
\end{proposition}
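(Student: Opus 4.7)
The plan is to follow Linnik's dispersion method, which is the standard route for statements beyond the $Q=x^{1/2}$ barrier. After inserting signs $\eps_{q}\in\{-1,0,+1\}$ with $\eps_{q}\,|\Delta|=\Delta$, the task becomes to bound
\[
\Sigma=\sum_{q\leqslant (LN)^{1-\eps},\ (q,a)=1}\eps_{q}\,\Delta(\boldsymbol\alpha*\boldsymbol\beta*\boldsymbol\lambda;q,a).
\]
Under $(S3)$ the variable $m\sim M$ is by far the longest, so the first step is to open the convolution, pull the $\alpha_{m}$-factor outside by Cauchy--Schwarz, and reduce to a dispersion sum of the shape
\[
\Sigma^{2}\ \ll\ M\sum_{m\sim M}\Bigl|\sum_{q\sim Q}\eps_{q}\Bigl(\underset{\ell n\equiv a\bar m\bmod q}{\sum\sum}\lambda_{\ell}\beta_{n}-\tfrac{1}{\varphi(q)}\underset{(\ell n,q)=1}{\sum\sum}\lambda_{\ell}\beta_{n}\Bigr)\Bigr|^{2}.
\]
Expanding the square and swapping summations, $\Sigma^{2}$ splits into three pieces: a true term, a cross term and a main term arising from the averaged arithmetic progression; the Siegel--Walfisz hypothesis on $\boldsymbol\beta$ is what aligns the main terms with the subtracted quantity up to an admissible error, combinatorial multiplicities being absorbed by the divisor-moment estimates of Lemmas~\ref{linnik1}--\ref{15:10}.

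The core is the remaining sum, a fourfold sum over $(\ell_{1},n_{1},\ell_{2},n_{2})$ weighted by
\[
W(\ell_{1}n_{1},\ell_{2}n_{2})\ =\ \sum_{m\sim M}\ \underset{\substack{q_{1},q_{2}\sim Q\\ \ell_{i}n_{i}m\equiv a\bmod q_{i}}}{\sum\sum}\eps_{q_{1}}\eps_{q_{2}}\ -\ \text{expected value}.
\]
By the Chinese Remainder Theorem, the congruence modulo $[q_{1},q_{2}]$ is detected by additive characters and the $m$-average becomes an incomplete exponential sum whose Fourier completion produces Kloosterman sums of moduli $\leqslant Q^{2}$. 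Summing over $q_{1},q_{2}$ with consecutive denominators brings in sums of Kloosterman sums, for which the Deshouillers--Iwaniec bounds of \cite{DeIw} provide cancellation beyond Weil's pointwise estimate. The two inequalities $L^{2}N\leqslant M^{2-\eps}$ and $L^{3}N^{4}\leqslant M^{4-\eps}$ in $(S3)$ are exactly what is needed so that, after Cauchy--Schwarz and the spectral bound, the saving over the trivial count dominates; the side condition $\log N\geqslant\eps\log M$ prevents $N$ from collapsing into the range where the Siegel--Walfisz input becomes vacuous.

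The delicate step, and the main obstacle, is the off-diagonal Kloosterman estimate: one must track the coprimality conditions and the dependence on $(q_{1},q_{2})$ carefully when feeding the sum into the Deshouillers--Iwaniec machinery, so that the spectral bound can be applied with the correct normalization on each of the four variables $\ell_{1},\ell_{2},n_{1},n_{2}$. A secondary nuisance is the verification that the diagonal contribution matches the main term with error $O(x\mathcal L^{-C})$; this relies on writing the difference as an average of $\Delta(\boldsymbol\beta;\cdot,\cdot)$ against a divisor-type kernel in $(\ell_{1},\ell_{2})$ and applying the Siegel--Walfisz hypothesis assumed on $\boldsymbol\beta$ together with the moment bounds of Section~\ref{divisorfunctions}. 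Finally one chooses $A_{0}$ sufficiently large in the Siegel--Walfisz hypothesis to absorb the $\tau^{B}(d)$-type factors that arise when unrolling the coprimality conditions between $\ell$, $n$ and $q$.
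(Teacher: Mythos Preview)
The paper does not supply its own proof of this proposition: it is quoted as \cite[Th\'eor\`eme~2]{Fo0}, and the method is visible in the subsequent proof of Proposition~\ref{fouvrytrilinear}, which recapitulates \cite{Fo0} in detail. Your outline diverges from that argument in two essential respects.

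First, the placement of the $q$--sum relative to Cauchy--Schwarz. In \cite{Fo0} (and in the proof of Proposition~\ref{fouvrytrilinear} here) one sets $\boldsymbol\gamma=\boldsymbol\beta*\boldsymbol\lambda$ and applies Cauchy--Schwarz to $\sum_{q}\sum_{m}|\alpha_m|\,|\cdots|$ with the $q$--sum \emph{outside} the square, obtaining a dispersion $D(Q)=W(Q)-2V(Q)+U(Q)$ in which the bilinear piece $W(Q)$ involves a \emph{single} modulus and the condition $k_1\equiv k_2\bmod q$. The $m$--sum is then evaluated exactly as $M/q+\psi(\cdot)$, and the fractional part is analysed via completion. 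Your setup keeps the $q$--sum inside the square, producing pairs $(q_1,q_2)$ and a congruence modulo $[q_1,q_2]$; this is the BFI--style dispersion, not the one used here.

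Second, and more importantly, the exponential--sum input. The paper explicitly remarks (just after the statement of Proposition~\ref{fouvryoriginal}) that the large uniformity $0<|a|\leqslant x$ is ``due to the use of Weil's classical bound for Kloosterman sums instead of kloostermania''. The bound \eqref{22fev3} for $W_1(Y,Q)$ in the proof of Proposition~\ref{fouvrytrilinear} is obtained from Weil's pointwise estimate, not from the Deshouillers--Iwaniec spectral machinery you invoke. The conditions $(S3)$ --- in particular the exponents in $L^2N\leqslant M^{2-\eps}$ and $L^3N^4\leqslant M^{4-\eps}$ --- are precisely the outcome of the Weil--based computation in \cite{Fo0}; your assertion that they are ``exactly what is needed'' after the spectral bound is unsupported, and in general the DI route yields a different admissible region for $(K,L,M,Q)$ (compare $(S1)$, $(S2)$, $(S4)$ in Propositions~\ref{trilinear} and~\ref{trilinearsmooth}). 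If you pursued your approach honestly you would have to redo the numerology and would likely not recover $(S3)$ on the nose, nor the uniformity in $a$.

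A minor point: the lemmas of \S\ref{divisorfunctions} that you cite are tailored to the sifted boundary configurations in the proof of Theorem~\ref{extension}, not to the divisor moments needed in a dispersion argument; the relevant input here is Lemma~\ref{linnik1} together with elementary bounds (see \eqref{boundW=}, \eqref{radio}).
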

It is worth to notice the large uniformity over $a$ compared with the results contained in  Propositions \ref{17:09}--\ref{trilinear}. This is due to the use of Weil's classical bound for Kloosterman sums instead of kloostermania. However we shall not use this uniformity here. Nevertheless the condition $(A_3(x))$ is now absent from the hypothesis,  but the range of summation  for $q$ is not satisfactory for our application. As in Proposition \ref{trilinear}, we would like to go up to $q\leqslant (LN) \, \mathcal L^{-A_0}.$
 
 We now give the improvement  of Proposition \ref{fouvryoriginal} necessary for our application.

\begin{proposition}\label{fouvrytrilinear}
 Let $a$ be an integer.  Let $\kappa\,:\, \R \to \R$ be a real function. Let $\eps$,   $x$, $C$,  $L$,
$M$, $N$ be real numbers such that : $\eps$ and $C>0$, $L$, $M$ and $N\geqslant x^\epsilon$, $x=LMN$, 
such that
$(S_3)$ is satisfied. Let $a$ be an integer  such that$0<\vert a \vert \leqslant x$.
  Let $\boldsymbol \alpha =(\alpha_m)_{m\sim M}$, $\boldsymbol \beta  =(\beta_{n})_{n\sim N}$ and $\boldsymbol\lambda  =(\lambda_\ell)_{\ell \sim L}$ be three sequences such that   
  
\smallskip
\noindent $\bullet$  $\boldsymbol \alpha$,  $\boldsymbol \beta,$ and $\boldsymbol \lambda $ satisfy $(A_2(B))$, 

\smallskip
\noindent $\bullet$ $\boldsymbol \beta$ satisfies $(A_1(B,\kappa))$.

\smallskip
Then  there exists $A_0$ depending only on $B$ and $C$ such that  we have
$$
\sum_{q\sim Q\atop (q,a)=1}
\Bigl\vert\,\Delta (\boldsymbol \alpha* \boldsymbol \beta * \boldsymbol \lambda  ;q,a)\, \Bigr\vert \ll  x \mathcal L^{-C },
$$
  for $Q\leqslant (LN)\,\mathcal L^{-A_0}$. The constant implied in the $\ll$--symbol depends at most on $\eps$, $\kappa$, $B$  and $C$.  
\end{proposition}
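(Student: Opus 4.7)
The plan is to follow the dispersion argument of Fouvry used to establish Proposition \ref{fouvryoriginal}, but to replace the Weil bound for individual Kloosterman sums by the Deshouillers--Iwaniec type estimates for averages of Kloosterman sums, in the spirit of the proofs of Theorems 3 and 4 of \cite{BFI2} that underlie Proposition \ref{trilinear}. This is exactly the ingredient that jumps the level from the $(LN)^{1-\eps}$ of Proposition \ref{fouvryoriginal} to the $(LN)\mathcal L^{-A_0}$ asked for here, and the point is that it can be inserted without reintroducing the condition $(A_3(x))$ on $\boldsymbol\lambda$.

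A routine initial reduction decomposes each of the weights $\alpha_m$, $\beta_n$, $\lambda_\ell$ (on average of size $\tau^B$ by $(A_2(B))$) into $O(\mathcal L^{O(B)})$ essentially bounded subsequences via level sets of the divisor function, at the cost of replacing the target $\mathcal L^{-C}$ by $\mathcal L^{-C-O(B)}$. Then, setting $c_q=\mathrm{sgn}\,\Delta(\boldsymbol\alpha*\boldsymbol\beta*\boldsymbol\lambda;q,a)$, the sum to bound becomes
$$
\sum_{q\sim Q,\,(q,a)=1}c_q\sum_{m\sim M}\alpha_m\Bigl(\sum_{\ell n\equiv a\bar m\bmod q}\lambda_\ell\beta_n-\frac{1}{\varphi(q)}\sum_{(\ell n,q)=1}\lambda_\ell\beta_n\Bigr),
$$
after removing the contribution of $(m,q)>1$, which is negligible. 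A Cauchy--Schwarz in $m$, together with $\sum_{m\sim M}|\alpha_m|^2\ll M\,\mathcal L^{O(B)}$, reduces the task to estimating a dispersion
$$
\mathcal D:=\sum_{m\sim M}\Bigl|\sum_{q\sim Q}c_q\bigl(\Sigma_q(m)-\tfrac{1}{\varphi(q)}\Sigma_q^{\flat}(m)\bigr)\Bigr|^{2},
$$
where $\Sigma_q(m)$ and $\Sigma_q^{\flat}(m)$ denote the two inner sums above. The hypothesis $(S_3)$ makes $M$ the long variable, so that this dispersion is genuinely profitable.

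Expanding the square and executing the sum over $m$ produces a main term, which is controlled by the Siegel--Walfisz hypothesis $(A_1(B,\kappa))$ on $\boldsymbol\beta$ together with the divisor-weight estimates of Lemma \ref{lemma13ofBFI2}, plus an off-diagonal contribution of the shape
$$
\sum_{q_1,q_2\sim Q}\ \underset{\substack{\ell_i\sim L,\,n_i\sim N\\ a\ell_2n_2\equiv a\ell_1n_1\bmod [q_1,q_2]}}{\sum\sum\sum\sum}\lambda_{\ell_1}\overline{\lambda_{\ell_2}}\,\beta_{n_1}\overline{\beta_{n_2}}\ T_{q_1,q_2}(\ell_1,\ell_2,n_1,n_2),
$$
where $T_{q_1,q_2}$ counts $m\sim M$ in an arithmetic progression modulo $[q_1,q_2]/(q_1,q_2)$. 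Poisson summation on $m$ (equivalently the standard reciprocity trick) converts $T_{q_1,q_2}$ into an incomplete Kloosterman sum of modulus essentially $q_1q_2$, with smooth arguments built from the $\ell_i$ and $n_i$.

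The heart of the matter --- and the main obstacle --- is bounding the resulting average of Kloosterman sums. Weil's individual bound only suffices up to level $(LN)^{1-\eps}$, which is precisely what restricts Proposition \ref{fouvryoriginal}; to reach $(LN)\mathcal L^{-A_0}$ I would invoke the Deshouillers--Iwaniec spectral estimates for sums of Kloosterman sums, organized much as in the later sections of \cite{BFI2}. The three clauses of $(S_3)$ are calibrated exactly for this: $L^{2}N\le M^{2-\eps}$ absorbs the diagonal produced by the inner Cauchy--Schwarz, $L^{3}N^{4}\le M^{4-\eps}$ absorbs the off-diagonal Kloosterman contribution after the spectral bound, and $\log N\ge\eps\log M$ ensures that every dyadic subdivision of the $n$-variable costs only $O(\log)$. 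Assembling the diagonal main term with the off-diagonal bound and extracting a square root then yields $x\,\mathcal L^{-C}$, as required. The only non-mechanical difficulty is arranging the combinatorial decomposition so that the sum of Kloosterman sums is amenable to Deshouillers--Iwaniec without appealing to $(A_3(x))$ on $\boldsymbol\lambda$; this is precisely the place where the present argument diverges from Fouvry's \cite{Fo0} use of Weil's bound.
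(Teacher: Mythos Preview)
Your plan departs from the paper's actual proof in an essential way, and rests on a misdiagnosis of why Proposition~\ref{fouvryoriginal} stopped at $Q\leqslant (LN)^{1-\eps}$. You assert that ``Weil's individual bound only suffices up to level $(LN)^{1-\eps}$'' and that Deshouillers--Iwaniec is the ingredient that jumps to $(LN)\mathcal L^{-A_0}$. The paper shows precisely the opposite: Weil's bound, exactly as used in \cite{Fo0}, already suffices in the new range. The point is that for $(LN)^{1-\eps}<Q\leqslant (LN)\mathcal L^{-A_0}$ one sets $\boldsymbol\gamma=\boldsymbol\beta*\boldsymbol\lambda$ supported on $[K,4K[$ with $K=LN$, runs the dispersion $D(Q)=W(Q)-2V(Q)+U(Q)$ after Cauchy--Schwarz over $m$ (for each $q$ separately, not with $c_q$ inside), and observes that in $W(Q)$ the common divisor $d=(k_1,k_2)$ is forced to satisfy $d\leqslant\Delta:=3K/Q\leqslant 3K^\eps$. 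After peeling off the part of $k_1$ supported on primes of $d$ via a Rankin-type estimate (Lemma~\ref{easy}), the remaining expression $W_1(Y,Q)$ is \emph{identical} to the one in \cite[p.369]{Fo0} except that the $d$-sum is now over $d\leqslant\Delta\leqslant x^\eps$ rather than $d\leqslant x^\eps$; since that sum was bounded in absolute value in \cite{Fo0}, the Weil-based estimate \cite[(26)]{Fo0} carries over verbatim. The final restriction $Q\leqslant (LN)\mathcal L^{-A_0}$ comes not from the Kloosterman part at all but from the diagonal $k_1=k_2$ contribution $W^{=}(Q)\ll x\mathcal L^{B^*}$ and from the large-sieve comparison of $B(Q)$ with $A(Q)$ using $(A_1(B,\kappa))$ on $\boldsymbol\beta$, both of which feed a term $MQx\mathcal L^{B^*}$ into $E^2(Q)$.

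So your route via Deshouillers--Iwaniec is at best unnecessary machinery, and your claim that the three clauses of $(S_3)$ are ``calibrated exactly'' for the spectral bound is unsupported: $(S_3)$ was designed in \cite{Fo0} for Weil, and it is the inequalities $L^2N\leqslant M^{2-\eps}$ and $L^3N^4\leqslant M^{4-\eps}$ that make the Weil error terms $L^3MN^{5/2}x^{8\eps}$ and $L^{11/4}MN^3x^{8\eps}$ smaller than $x^{2-\eps'}$. Whether a genuine Deshouillers--Iwaniec argument with your $c_{q_1}\overline{c_{q_2}}$ structure and moduli $[q_1,q_2]$ would even go through under $(S_3)$ without $(A_3(x))$ is not established by your sketch; the paper sidesteps this question entirely by noting that no such upgrade is needed.
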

\begin{proof} When $Q\leqslant (LN)^{1-\eps}$,  the extension from Proposition \ref{fouvryoriginal} to Proposition \ref{fouvrytrilinear} is straightforward by following the proof of \cite[Th\'eor\`eme 2]{Fo0}.

  Hence we are left with the case
\begin{equation}\label{cond1}
(LN)^{1-\eps} < Q \leqslant (LN)\mathcal L^{-A_0}.
\end{equation}

  We shall follow the notations of  \cite{Fo0} the most possible, even if they are different sometimes from \cite{BFI2}.
Let 
$$\boldsymbol \gamma = \boldsymbol \beta  * \boldsymbol \lambda.$$
Hence $\boldsymbol \gamma =(\gamma_{k})_k$ has its support included in $[K,4K[$, with $K:=LN$.  Note that 
\begin{equation}\label{boundforgamma}
\vert \gamma_k \vert \leqslant B^2 \tau^{2B+1} (k),
\end{equation}
  by $(A_2(B))$. In the following proof, we shall denote by $B^*$ a constant depending only on the constant $B$ appearing  in the assumptions $(A_1(B,\kappa ))$ and  $(A_2(B))$. The value of $B^*$ may change at each time it appears.

Let  $E(Q)$ be the sum
$$
E(Q): =\sum_{q\sim Q\atop (q,a)=1}\sum_{(m,q)=1} \vert \alpha_m\vert \, \Bigl\vert  \sum_{ k \equiv a
\overline m \bmod q}
  \gamma_k-{1\over \varphi (q)} \sum_{( k,q)=1}   \gamma_k
\Bigr\vert. 
$$
(here $\overline m$ is the multiplicative inverse of $m \bmod q$.)
Obviously, $E(Q) $ satisfies the inequality
$$
\sum_{q\sim Q\atop (q,a)=1}
\Bigl\vert\,\Delta (\boldsymbol \alpha* \boldsymbol \beta * \boldsymbol \lambda  ;q,a)\, \Bigr\vert \leqslant  E(Q).
$$
By the Cauchy--Schwarz inequality, by the assumption $(A_2(B))$ for $\boldsymbol \alpha$ and by  inversion of  summation,
we get the inequality (see \cite[p.365]{Fo0})
\begin{equation}\label{12fev1}
E^2 (Q)\leqslant \Vert \boldsymbol \alpha \Vert \  Q\ D(Q)\ll M\, Q \,\mathcal L^{B^*} \, D(Q) ,
\end{equation}
where the dispersion $ D(Q)$ is 
\begin{equation}\label{12fev2}
D(Q) :=W(Q)-2V(Q)+U(Q),
\end{equation}
with 
$$
U(Q):=\sum_{q\sim Q\atop (q,a)=1} \ \sum_{m\sim M\atop (m,q)=1}\,  \Bigl( {1\over \varphi (q)} \sum_{(k,q)=1} \gamma_k \Bigr)^2,
$$
$$
V(Q):=\sum_{q\sim Q\atop (q,a)=1} \ \sum_{m\sim M\atop (m,q)=1}
\Bigl( \ \sum_{k_1\equiv a \overline m \bmod q} \gamma_{k_1}\Bigr) \Bigl( {1\over \varphi (q)} \sum_{(k_2,q)=1} \gamma_{k_2} \Bigr), 
$$
and
$$
W(Q):=\sum_{q\sim Q\atop (q,a)=1} \ \sum_{m\sim M\atop (m,q)=1} \Bigl(\    
\sum_{k\equiv a \overline m \bmod q} \gamma_k\Bigr)^2.
$$
Let 
also
$$
A(Q):= \sum_{q\sim Q\atop (q,a)=1} {1\over q\, \varphi (q)} \Bigl( \sum_{(k,q)=1} \gamma_k\Bigr)^2.
$$

Following the proof of \cite[Form.(6)]{Fo0} and appealing to Lemma \ref{linnik1},  we prove the equality
\begin{equation}\label{12fev3}
U(Q) =MA(Q) + O \bigl(   K^{2}\, Q^{-1}\, \mathcal L^{B^*}\bigr).
\end{equation}
By the proof of  \cite[Form.(12)]{Fo0}, we also have
\begin{equation}\label{12fev400}
V(Q)= M A(Q) + O_\eps\bigl( KQ^{-1} x^{1-\eps}+ K^{5\over 2} Q^{-1} x^{7\eps} \bigr),
\end{equation}
where  $\eps$ appears in \eqref{cond1}.

The study of $W (Q)$ is more delicate. Firstly we take some care to get rid of the common divisors.  Let
\begin{equation}\label{20:57}
\Delta := 3KQ^{-1},
\end{equation}
and, by \eqref{cond1},  we can suppose that
\begin{equation}\label{14fev00}
3\mathcal   L^{A_0}\leqslant \Delta < 3K^{\eps}.
\end{equation}
Then we notice that if $k_1$ and $k_2$ are two distinct integers of the interval $[K,4K[$,   satisfying $(k_1k_2, q)=1$ and $k_1- k_2 =q_0 q$, for some $q \sim Q$ and some positive  integer $q_0$
 we then have
\begin{equation}\label{12fev10}
 (k_1,k_2)= (k_1, k_1-k_2) =(k_1, q_0q) = (k_1, q_0) \leqslant  q_0\leqslant  \Delta.
\end{equation}
Following \cite[\S VI]{Fo0}, we write the equality
\begin{equation}\label{defW}
W(Q) = \sum_{q\sim Q\atop (q,a)=1}\ \sum_{k_1\equiv k_2\bmod q\atop (k_1k_2,q)=1}
\gamma_{k_1} \gamma_{k_2} \sum_{m\sim M \atop m\equiv a \overline k_1 \bmod q}  1.
\end{equation}
We first notice that the contribution, say $W^= (Q)$,  to $W(Q)$ of the $(k_1,k_2)$ with $k_1=k_2$
satisfies
\begin{align*}
\bigl\vert W^= (Q)\bigr\vert& \ll  \sum_{k\leqslant  4K} \tau^{4B+2} (k) \sum_{m\sim M\atop km\not=a} \tau (\vert km-a\vert ) +Qx^\eps .
\end{align*}
Writing $t=km$, we deduce that 
\begin{equation}\label{boundW=}
\bigl\vert W^= (Q)\bigr\vert  \ll \sum_{t\leqslant  8x\atop t\not= a}  \tau^{4B+3} (t) \tau( \vert t-a \vert ) +Qx^\eps \ll x\mathcal L^{B^*},
\end{equation}
by Cauchy--Schwarz inequality and by Lemma \ref{linnik1}.  We will see that  the bound \eqref{boundW=} is acceptable in view of  \eqref{cond1} \& \eqref{12fev1} by choosing $A_0$ sufficiently large.

Let  $W^{\not=} (Q)$ be the contribution to $W(Q)$ of the pairs  $(k_1,k_2)$ with $k_1\not= k_2$ (see \eqref{defW}).   By \eqref{12fev10}, we know that $d:=(k_1,k_2)$ is less than $\Delta$. Decomposing $W^{\not=} (Q)$ according to the the value of $d $ and writing $k_i=dk'_i$ ($i=1$, $2$)   we have  the equality (compare with \cite[Form.(13)]{Fo0})
\begin{equation}\label{20:37}
W^{\not=} (Q)= \sum_{q\sim Q \atop (q,a)=1}
 \sum_{d\leqslant \Delta\atop (d,q)=1}
\sum_{k'_1\equiv  k'_2 \bmod q, \, k'_1\not= k'_2\atop (k'_1,k'_2)= (k'_1k'_2, q) =1} \gamma_{dk'_1} \gamma_{dk'_2} \sum_{m\sim M \atop m\equiv a \overline{d k'_1} \bmod q} 1.
\end{equation}
We continue to prepare the variable by extracting from $k'_1$ all the prime factors 
appearing also in $d$. So we write $k'_1 =d_1 k''_1$ with $d_1 \mid d^\infty$ and $(k''_1, d)=1$ and we use the following crude estimate 
\begin{lemma}\label{easy}
Uniformly for $d$ integer $\geqslant 1$ and $y\geqslant 1$, we have the inequality
$$
\sum_{d_1 \mid d^\infty \atop d_1\geqslant y}{1\over d_1} \ll {\tau (d)\over y^{1\over 2}}.
$$
\end{lemma}
\begin{proof}  We may restrict to the case where $d=p_1\cdots p_r$ is squarefree. Following Rankin's method, we write, for every   $ \kappa \in ]0,1[ $,  the inequality
$$
\sum_{d_1 \mid d^\infty \atop d_1\geqslant y}{1\over d_1} \leqslant     
\sum_{d_1\mid d^\infty} {1\over d_1}\cdot \Bigl( {d_1\over y}\Bigr)^\kappa = {1\over y^{ \kappa}} \prod_{i=1}^r\Bigl( 1 -p_i^{\kappa -1}\Bigr)^{-1} \ll {1\over y^{ \kappa}}\exp\bigl( \sum_{i=1}^r p_i^{ \kappa -1}\bigr).
$$
Fixing $\kappa=1/2$, we get the desired upper bound.
\end{proof}
Inspired by \cite[p.368]{Fo0}, we see that the contribution to the right part
of \eqref{20:37} of $d_1>y$  is
$$=
\sum_{q\sim Q\atop (q,a)=1)} \sum_{d\leqslant \Delta\atop (d,a)=1} \sum_{d_1\mid d^\infty\atop d_1 >y}\  \sum_{\substack{d_1k''_1 \equiv k'_2\bmod q\\  d_1k''_1 \not= k'_2 \\ (k''_1, dk'_2)=(k''_1k'_2,d_1q)=1} } \gamma_{dd_1k''_1} \gamma_{dk'_2}\sum_{m\sim M\atop m\equiv a \overline{d d_1 k''_1}\bmod q }1. 
$$
Using \eqref{boundforgamma}  and the inequality $\tau (n)\ll X^{\eps\over 20 (2B+1)}$ ($0<n\leqslant X$ ) several times, 
and separating the cases $dd_{1}k''_{1}m-a\not =0$ from the case $dd_{1}k''_{1}m-a=0$, we see, by  
 \eqref{20:57}, that the above  contribution is
\begin{align}\label{radio}
&\ll M x^{\eps  \over 5}    \sum_{d\leqslant \Delta\atop (d,a)=1} \sum_{d_1\mid d^\infty\atop d_1 >y}\  \sum_{k''_1\leqslant 4K/(dd_1)} {K\over dQ} +Kx^\frac{\varepsilon}{10}\nonumber\\
&\ll  K^2MQ^{-1}x^{\eps\over 3} y^{-{1\over 2}}+Kx^\frac{\varepsilon}{10}\nonumber\\
&\ll K^2 M Q^{-1} x^{-{\eps \over 6}},
\end{align}
by choosing 
\begin{equation}\label{radio1}
y=x^{ \eps},
\end{equation}
 and applying Lemma \ref{easy}. Note that \eqref{radio} is acceptable in view of     \eqref{cond1} \& \eqref{12fev1}. Gathering \eqref{defW}, \eqref{boundW=}, \eqref{20:37} \& \eqref{radio} and slightly changing the notations,  we write 
 the equality
 \begin{align}\label{22fev1}
W  (Q)= &\sum_{q\sim Q \atop (q,a)=1}
 \sum_{d\leqslant \Delta\atop (d,q)=1} \sum_{d_1\mid d^\infty \atop d_1 \leqslant y}
\sum_{d_1k'_1\equiv  k_2 \bmod q, \, d_1k' _1\not= k _2\atop ( k'_1,dk_2)= (k'_1k_2, d_1q) =1} \gamma_{dd_1k'_1} \gamma_{dk_2} \sum_{m\sim M \atop m\equiv a \overline{dd_1 k'_1} \bmod q} 1\\
&+
O\bigl(  x\mathcal L^{B^*} + x^2 M^{-1} Q^{-1} x^{-{\eps\over 6}}\bigr).\nonumber
\end{align}
 (Compare with \cite[Form.(14)]{Fo0}). The main term in \eqref{22fev1} certainly comes in replacing the last sum by its approximation ${M/q}$.
 When this  replacement is done we can forget the conditions $d_1 k_1\not= k_2$ and $d_1\leqslant y$.  We introduce an error  which is in  $\ll x\mathcal L^{B^*} + K^2MQ^{-1} x^{-{\eps\over 6}}$ (same computations as for \eqref{boundW=}  \& \eqref{radio}). Let 
 \begin{equation}\label{defB}
 B(Q) := \sum_{q\sim Q\atop (q,a)=1} {1\over q} \ \sum_{k_1\equiv k_2\bmod q\atop (k_1k_2, q)=1} \gamma_{k_1}\gamma_{k_2}  
 =
 \sum_{q\sim Q\atop (q,a)=1} {1\over q} \ \sum_{\kappa\bmod q\atop (\kappa, q)=1} \Bigl(\  \sum_{k\equiv \kappa\bmod q}\gamma_{k}\Bigr)^2.
 \end{equation}
 The above discussion transforms \eqref{22fev1} into the following equality, which has to be compared with \cite[Form.(15)]{Fo0} 
 \begin{equation}\label{22fev2}
 W(Q) =M B(Q) +W_1 (M,Q) -W_1 (2M,Q) +O\bigl(  x\mathcal L^{B^*} + x^2 M^{-1} Q^{-1} x^{-{\eps\over 6}}\bigr),
 \end{equation} 
 with
 $$
 W_1 (Y,Q)= \sum_{q\sim Q \atop (q,a)=1}
 \sum_{d\leqslant \Delta\atop (d,q)=1} \sum_{d_1\mid d^\infty \atop d_1 \leqslant y}
\sum_{d_1k'_1\equiv  k_2 \bmod q, \, d_1k' _1\not= k _2\atop ( k'_1,dk_2)= (k'_1k_2, d_1q) =1} \gamma_{dd_1k'_1} \gamma_{dk_2} 
\psi \Bigl( {Y-a \overline{dd_1 k'_1}\over q}\Bigr),
 $$
where $\psi (t) +1/2$ is the fractional part of $t$.  Our present formula of  $W_1 (Y,Q)$ coincides  with the  corresponding formula of $W_1 (Y,Q)$ given in \cite[p.369]{Fo0}, with the tiny difference that the sum is over $d\leqslant x^\eps$ instead of $d\leqslant
\Delta$. In \cite{Fo0}, the problem of bounding $W_1(Y,Q)$ (with $Y=M$ or $2M$) is accomplished by appealing to Weil's bound for Kloosterman sums. It is easy to check, that, in this paper, the summation over $d$ is always made on the norms of the corresponding sums. Hence, since by \eqref{14fev00}, we have $\Delta\leqslant x^\eps$, we can apply \cite[Form.(26)]{Fo0}, in our case, giving the bound
\begin{equation}
\label{22fev3}
W_1(Y,Q) \ll L^2MN^2 Q^{-1} x^{-{\eps \over 2}} +L^3N^{5\over 2} Q^{-1} x^{3\eps} +L^{11\over 4} N^3 Q^{-1} x^{5\eps},
\end{equation}
for $Y=M$ or $2M$. By the orthogonality of characters,  the large sieve inequality and the  assumption $(A_1(B,\kappa))$ for $\boldsymbol \beta$, we get (compare with  \cite[Form.(37) \& (40)]{Fo0}) the inequality
\begin{align}\label{22fev4}
0\leqslant  MB(Q)-MA(Q) \leqslant  \sum_{q\sim Q\atop (q,a)=1} {1\over q\, \varphi (q)} \sum_{\chi \bmod q \atop \chi \not= \chi_0} \Bigr\vert \sum_k \chi (k)\gamma_k \Bigr\vert^2\nonumber\\ 
\ll x^2M^{-1} Q^{-1} \mathcal L^{B^*-2C} +x \mathcal L^{B^*},
\end{align}
which is true for any $C>0$. Gathering \eqref{12fev1}, \eqref{12fev2}, \eqref{12fev3}, \eqref{12fev400}, \eqref{22fev2}, \eqref{22fev3} \& \eqref{22fev4}, we can write
\begin{align*}
E^2 (Q)\ll MQ\mathcal L^{B^*}\Bigl\{ &L^2N^2 Q^{-1}+ LNQ^{-1} x^{1-\eps} + L^{5\over 2} N^{5\over 2}Q^{-1} x^{7\eps} +L^2MN^2 Q^{-1} x^{-{\eps \over 6}} \\
&+L^3N^{5\over 2} Q^{-1} x^{3\eps} +L^{11\over 4} N^3 Q^{-1} x^{5\eps}+  L^2MN^2  Q^{-1}  \mathcal L^{ -2C} +x
\Bigr\},
\end{align*}
which simplifies into
\begin{align*}
 E^2 (Q)\ll  x^2\mathcal L^{B^*-2C} +MQx\mathcal L^{B^*} +L^3 M   N^{5\over 2} x^{8\eps} +L^{11\over 4} M N^3 x^{8\eps}.
 \end{align*}
 This gives    bound claimed in Proposition \ref{fouvrytrilinear}, under the assumptions  $(S3)$ and \eqref{cond1} after    changing the value of $\eps$ and $C$.

\end{proof}

\subsection{Particular cases of equidistribution}\label{rabbit}
We now finish with some particular cases where $\boldsymbol \lambda$ is the characteristic function of quasi primes. The first result is \cite[Theorem 5*]{BFI2}.
\begin{proposition}\label{trilinearsmooth}
Let $a\not= 0$ be an integer. Let $\eps$, $z$,  $B$ and $C$ be positive numbers.    Let $x$,  $K$, $L$, $M$,  $\boldsymbol \eta$, $\boldsymbol \lambda $ and $\boldsymbol \alpha$  as in \eqref{12:08}, and satisfying the extra conditions 
\vskip .2cm
\noindent  $\bullet$ $1\leqslant z\leqslant \exp (\log 2x/(\log \log 2x))$,
\vskip .2cm
\noindent  $\bullet$ $K$, $L$, $M\geqslant  x^\eps$,
 \vskip .2 cm
\noindent  $\bullet$  $\boldsymbol \alpha$ and $\boldsymbol \eta$ satisfies $(A_2(B))$, 
\vskip .2cm
\noindent  $\bullet$ $\boldsymbol \lambda$  satisfies $(A_4 (z))$.
 \vskip .2cm
 Then there exists $A_0$ depending only on $B$ and $C$, such that the following  inequality holds
\begin{equation}\label{22fev10}
\sum_{q\sim Q\atop (q,a)=1}
\Bigl\vert\,\Delta (\boldsymbol \eta* \boldsymbol \lambda * \boldsymbol \alpha  ;q,a)\, \Bigr\vert \ll  x \mathcal L^{-A },
\end{equation}
as soon as $Q$ satisfies
 $$
Q\mathcal L^{A_0} < KL,\ MK^4Q <  x^{2-\eps}\text{ and } MK^2Q^2 < x^{2-\eps}.
\leqno (S4)
$$
 The constant implied in the $\ll$--symbol in \eqref{22fev10} depends at most on $\eps$, $a$, $B$, $C$.
 \end{proposition}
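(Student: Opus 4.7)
My plan is to reduce Proposition \ref{trilinearsmooth} to Proposition \ref{trilinear} by exploiting the quasi-prime structure of $\boldsymbol \lambda$. Since $(A_4(z))$ means that $\boldsymbol \lambda$ is the characteristic function of the sifted integers in an interval $\mathfrak L \subset [L,2L[$, I would apply a combinatorial identity (Buchstab iteration on the smallest prime factor of $\ell$, or a Heath--Brown identity in dimension two) to rewrite $\boldsymbol \lambda$ as a finite linear combination
\[
\boldsymbol \lambda \;=\; \sum_{j=1}^{J} c_j \, \boldsymbol \lambda^{(1)}_j * \boldsymbol \lambda^{(2)}_j,
\]
where for each $j$ one of the two factors is the characteristic function of an interval of primes (hence satisfies Siegel--Walfisz in the form $(A_1(B,\kappa))$), the other still satisfies $(A_4(z'))$ or both $(A_2(B))$ and $(A_3(x))$, and the supports have well-controlled sizes $L^{(1)}_j L^{(2)}_j \asymp L$. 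The number of pieces $J$ is at most polylogarithmic in $x$.

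For each piece, the fourfold convolution $\boldsymbol \eta * \boldsymbol \lambda^{(1)}_j * \boldsymbol \lambda^{(2)}_j * \boldsymbol \alpha$ is regrouped as a threefold convolution by merging one of the new factors with $\boldsymbol \eta$ or with $\boldsymbol \alpha$. The choice of regrouping, and of which of the two sets of hypotheses $(S1)$ or $(S2)$ of Proposition \ref{trilinear} to invoke, is dictated by the dyadic range $P$ in which the smaller of $L^{(1)}_j$ and $L^{(2)}_j$ falls. The common prerequisite $Q\mathcal L^{A_0} < KL$ transfers unchanged from $(S4)$; the delicate part is to check that the condition $MK^4 Q < x^{2-\eps}$ forces the $(S1)$-type bound $\widetilde K^2 \widetilde L^3 < Qx\mathcal L^{-A_0}$ in one dyadic regime of $P$, while $MK^2 Q^2 < x^{2-\eps}$ forces the $(S2)$-type bound $\widetilde K\widetilde L^2 Q^2 < x^2 \mathcal L^{-A_0}$ in the complementary regime. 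Here $(\widetilde K, \widetilde L, \widetilde M)$ denote the sizes of the three blocks after regrouping, and they are polynomial functions of $(K, L, M, P)$.

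The main obstacle will be the bookkeeping in this case analysis: one must organise the combinatorial decomposition so that every dyadic value of the auxiliary variable $P$ is covered by at least one of $(S1)$ or $(S2)$, and verify that $(S4)$ is precisely the union of the sufficient size conditions thus generated. I expect the crossover between the two regimes to occur near $P \asymp (Q/K)^{1/2}$, which explains the presence of the two quadratic constraints $MK^4 Q < x^{2-\eps}$ and $MK^2 Q^2 < x^{2-\eps}$ in $(S4)$. Once the case analysis is complete, summing the $J = O(\mathcal L^{O(1)})$ pieces loses only a polylogarithmic factor, which is absorbed by taking the constant $C$ in Proposition \ref{trilinear} to be $A + O_\eps(1)$, thereby delivering the claimed bound $x\,\mathcal L^{-A}$.
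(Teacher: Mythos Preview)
The paper does not prove this proposition at all: it is quoted verbatim as \cite[Theorem 5*]{BFI2}. So there is no argument in the paper to compare your strategy against, and the relevant question is whether your reduction to Proposition~\ref{trilinear} can actually work. I do not believe it can, for the following reasons.

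First, a structural mismatch: in Proposition~\ref{trilinear} the sequence $\boldsymbol\eta$ is required to satisfy $(A_3(x))$, whereas in Proposition~\ref{trilinearsmooth} only $(A_2(B))$ is assumed for $\boldsymbol\eta$ (and for $\boldsymbol\alpha$). No amount of decomposing $\boldsymbol\lambda$ will manufacture the missing quasi-primality hypothesis on $\boldsymbol\eta$, so you cannot invoke Proposition~\ref{trilinear} with the given $\boldsymbol\eta$ in the $\boldsymbol\eta$-slot. Second, your Buchstab/Heath--Brown decomposition of $\boldsymbol\lambda=\mathfrak z\,\mathbf 1_{\mathfrak L}$ does not do what you claim when $z$ is small: if $z=2$ then $\boldsymbol\lambda=\mathbf 1_{\mathfrak L}$ is the indicator of an interval, and there is no combinatorial identity that rewrites it as a short sum of convolutions in which one factor is supported on primes and the other satisfies $(A_3(x))$. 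Third, and most decisively, the size conditions do not transfer. Take for instance $K=x^{1/6}$, $L=x^{1/2}$, $M=x^{1/3}$ and $Q=x^{1/2}$: then $(S4)$ is comfortably satisfied, but $(S1)$ is vacuous (it forces $Q>x^{5/6}$) and $(S2)$ only allows $Q<x^{5/12}$. Splitting $L=L_1L_2$ and regrouping covers at best a narrow sub-range such as $x^{1/3}<L_1<x^{5/12}$, leaving the bulk of the $L$-variable untreated; there is no crossover that closes the gap.

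The point of \cite[Theorem~5*]{BFI2} is precisely that the hypothesis $(A_4(z))$ --- $\boldsymbol\lambda$ being the indicator of a sifted \emph{interval} --- is analytically, not combinatorially, exploited: inside the dispersion argument the sum over the smooth variable $\ell$ is completed (Poisson summation), which is what produces the genuinely different shape of $(S4)$ compared with $(S1)$ and $(S2)$. Your proposal treats $(A_4(z))$ as if it were merely an arithmetic constraint to be unwound, and thereby throws away exactly the structure that makes the result true. If you want a self-contained proof here, you must go back into the dispersion method and use the interval structure of $\boldsymbol\lambda$ directly, as in \cite{BFI2}.
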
 
Finally we recall a consequence of bounds of exponential sums (coming either from Weil's or Deligne's work) and of the fundamental lemma in sieve theory. We have (see \cite[Lemma 2*]{BFI2})
\begin{proposition} \label{deligne}Let $\eps >0$. Let $K$, $L$ and $M\geqslant 1$ and $x\geqslant KLM$, such that either $K=1$ or $K\geqslant x^{12\eps}$ and similarly either $L=1$ or $L\geqslant x^{12\eps} $ and either $M=1$ or $M\geqslant x^{12\eps}$.
Let $\mathfrak K$, $\mathfrak L$ and $\mathfrak M$ be three intervals respectively included in $[K, 2K[$, $[L, 2L[$ and $[M, 2M[$. Then there exists an absolute positive  constant $\delta$ such that 
$$
\Delta \bigl(\mathfrak z ({\mathbf 1}_{\mathfrak K}* {\mathbf 1}_{\mathfrak L}*{\mathbf 1}_{\mathfrak M} );q,a)   \ll {x\over \varphi (q)} \exp\Bigl( -\eps{\log x\over \log z}\Bigr)
$$
uniformly  for $(q,a)=1$ and $q\leqslant x^{{1\over 2}+\delta}$. The constant implied in the $\ll$--symbol depends at most on $\eps$.
\end{proposition}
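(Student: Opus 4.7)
Since $\mathfrak z$ is completely multiplicative, one has
$$\mathfrak z({\mathbf 1}_{\mathfrak K}*{\mathbf 1}_{\mathfrak L}*{\mathbf 1}_{\mathfrak M})=(\mathfrak z{\mathbf 1}_{\mathfrak K})*(\mathfrak z{\mathbf 1}_{\mathfrak L})*(\mathfrak z{\mathbf 1}_{\mathfrak M}),$$
so each of the three variables is sifted independently. My first step would be to linearise these sifting conditions by the Rosser--Iwaniec fundamental lemma: one selects upper and lower bound weights $\lambda^\pm_d$ supported on $d\mid \prod_{p<z}p$ with $d\leqslant D:=z^s$, where $s$ is chosen large enough in terms of $\eps$ so that the discrepancy between $\mathfrak z(n)$ and $\sum_{d\mid n}\lambda^\pm_d$ contributes at most $O((x/\varphi(q))\exp(-\eps\log x/\log z))$ to the sum. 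This reduces the proposition to a uniform bound on
$$\Delta({\mathbf 1}_{I_1}*{\mathbf 1}_{I_2}*{\mathbf 1}_{I_3};\,q,a)$$
for sub-intervals $I_j$ of lengths respectively $K/d_1$, $L/d_2$, $M/d_3$, summed over triples $(d_1,d_2,d_3)$ with each $d_j\leqslant D$.

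The second step is to detect the congruence $k\ell m\equiv a\bmod q$ via additive characters modulo $q$ and complete each of the three interval sums by Fourier expansion. The principal frequency reproduces the main term $\varphi(q)^{-1}\sum_{(n,q)=1}(\cdot)$ up to an error controlled by Lemma \ref{lemma13ofBFI2}. The nontrivial frequencies express the residual part as a weighted average of three-variable hyper-Kloosterman sums
$$S(\mathbf h;q)=\sum_{\substack{k\ell m\equiv a\bmod q\\(k\ell m,q)=1}} e_q(h_1 k+h_2\ell+h_3 m),$$
over frequency triples $\mathbf h=(h_1,h_2,h_3)$ in a box dual to the primal intervals. The hypothesis that each of $K,L,M$ is either $1$ or $\geqslant x^{12\eps}$ guarantees that the dual Poisson lengths are genuinely shorter than the primal ones for $q\leqslant x^{1/2+\delta}$, which is what makes the Fourier completion profitable rather than trivial.

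The crucial arithmetic input is Deligne's bound, deduced from the Weil conjectures applied to the smooth hypersurface $xyz\equiv a\bmod p$: for prime $p\nmid h_1 h_2 h_3$ one has $|S(\mathbf h;p)|\ll p^{1+o(1)}$. This is extended multiplicatively to squarefree $q$ and handled by a standard stationary-phase argument for prime powers. The decisive feature is that, for a complete three-variable sum, Deligne yields a full power saving in $q$ relative to the trivial estimate $q^{3/2}$ one gets from Weil applied to fewer variables. Summing this estimate over the dual frequency box, weighted by the Fourier coefficients of the intervals, and then summing over the sieve variables $(d_1,d_2,d_3)$ with $d_j\leqslant D$, produces an arithmetic error that, after bookkeeping of the various overheads, is strictly smaller than the target $(x/\varphi(q))\exp(-\eps\log x/\log z)$ provided $q\leqslant x^{1/2+\delta}$ for some absolute $\delta>0$.

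The main obstacle is the joint optimisation of the sieve level $D=z^s$ against the Deligne saving: $s$ must be taken large enough for the fundamental-lemma defect to match the target shape $\exp(-\eps\log x/\log z)$, yet the cumulative overhead $D^3$ (times secondary divisor-type losses estimated via Lemma~\ref{lemma13ofBFI2}) must not swallow the power saving in $q$ furnished by Deligne. It is precisely the fact that Deligne is available, and not merely Weil, for the three-variable complete sum $S(\mathbf h;q)$ that creates the absolute slack $\delta$ allowing the admissible modulus to exceed $x^{1/2}$ by a fixed amount independent of $\eps$; this is what the statement of the proposition is asserting.
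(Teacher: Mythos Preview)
The paper does not give a proof of this proposition; it is simply quoted as \cite[Lemma~2*]{BFI2}, preceded by the one-line remark that it is ``a consequence of bounds of exponential sums (coming either from Weil's or Deligne's work) and of the fundamental lemma in sieve theory.'' Your sketch is precisely this route and matches what is done in \cite{BFI2}: linearise each sifting condition by the fundamental lemma with level $D=z^s$, reduce to equidistribution of $\mathbf 1_{I_1}*\mathbf 1_{I_2}*\mathbf 1_{I_3}$ in a residue class, complete the interval sums by Poisson, and bound the resulting hyper-Kloosterman sums by Deligne (in the genuine three-variable case) or by Weil's Kloosterman bound (when one of $K,L,M$ equals~$1$). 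The dichotomy ``either $K=1$ or $K\geqslant x^{12\eps}$'' in the hypothesis is exactly what decides which exponential-sum bound is invoked, so your focus on the three-variable Deligne case covers the hardest configuration; the one- and two-variable cases are easier and fall to Weil alone.
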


\section{Proof of Theorem \ref{extension}}\label{goodday}
We arrive now at the central part  of our work. Of course, our proof   
highly imitates the proof given in \cite{BFI2}. The combinatorics is heavy and we were unable to find shortcuts to simplify the technique of \cite{BFI2}.

 \subsection{Notations and first reductions of the proof of Theorem \ref{extension}}\label{notation} As in \cite[\S 13]{BFI2},  the notation 
 $$
 \sum_n{}^*
 $$
 means that we are summing over integers $n$, with $\mathfrak z (n) =1,$
 and 
 $z$ now  has the value
 \begin{equation}\label{sam1}
 z:=\exp\Bigl({\log x\over (\log \log x)^2}\Bigr).
\end{equation}
 To prove Theorem \ref{extension}, we consider
 $$
 S(x, Q,P_1,P_2) :=  \sum_{q\sim  Q\atop (q,a)=1} \Bigl\vert 
\underset{P_1< p\leqslant P_2,\ pm\leqslant x\atop pm\equiv a \bmod q}{ \sum\ \ \  \sum} \log p  -{1\over \varphi (q)}
\underset{P_1< p\leqslant P_2,\ pm\leqslant x\atop (pm, q)=1}{ \sum\ \ \  \sum} \log p\,
 \Bigr\vert\,.
 $$
   Since $a$ is supposed to be fixed, we forget the dependency on $a$. We want to prove 
   the inequality
 \begin{equation}\label{16jan1}
 S(x,  Q,P_1,P_2)\ll  
  x\cdot {(\log y)^2\over \log x} \cdot  (\log \log x)^{B_2},
 \end{equation}
under the   conditions of Theorem \ref{extension}. 
For the rest of the proof,  we suppose the inequalities $B_2 \geqslant 2$ and  $Q^2\leqslant xy$ with
 \begin{equation}\label{16jan2}
\mathcal L^A \leqslant  y\leqslant \exp\Bigl({\log x\over  \log \log x}\Bigr):= y_0,
 \end{equation}
since $y\geqslant y_{0}$, 
 \eqref{16jan1} is trivial, by \eqref{20jan1}.  In \eqref{16jan2}, $A$ is a constant the definition of which will be given in \S \ref{interior}  when applying the results of \S \ref{equidistribution} -- \ref{rabbit}.  
 We shall also assume that
 $$
 Q\geqslant  x^{{1\over 2}-\eps},
 $$
 otherwise,  \eqref{16jan1} is a direct consequence of Proposition \ref{largesieve}. Of course Theorem  \ref{extension} is trivial also  when $P_1$ is too large ($P_1 >x$, since the sum is empty). Using the classical formulas
$$
\sum_{m\leqslant {x\over p}\atop m\equiv b\bmod q} 1 ={x\over p q} +O(1),
$$
and  
  \begin{equation}\label{31jan10}
 \sum_{m\leqslant {x\over p} \atop (m,q)=1} 1 ={\varphi (q) \over q}\cdot {x\over p} +O(\tau (q)),
\end{equation}
 we deduce the inequality
 $$
 S(x, Q,P_1,P_2) \ll \sum_{q\sim Q} \  \tau (q)   \sum_{P_1< p\leqslant P_2} \log p
 \ll    P_2\ Q \LL.
 $$
  This implies  that \eqref{16jan1} is trivially true when $P_2\leqslant x^{1\over 2} y_0^{-1} $ since     \eqref{16jan2} is  satisfied by hypothesis.  So we can restrict to the case $P_2 > x^{1\over 2} y_0^{-1} $.  Furthermore, if $P_1 < x^{1\over 2} y_0^{-1} < P_2$, we split the interval $]P_1,P_2]$ into the two intervals $]P_1, x^{1\over 2} y_0^{-1}] $ and  $]x^{1\over 2}y_0^{-1}, P_2]$. The contribution of the second case is trivially solved by the above remark. Hence, we can even  restrict ourselves to the case
 \begin{equation}\label{hurry}
 P_2 \geqslant P_1\geqslant x^{1\over 2} y_0^{-1}.
 \end{equation}
 Replacing the factor $\log p$ by the van Mangoldt function $\Lambda (n)$ (with an acceptable error) and applying a dyadic dissection we are led to introduce the modified sum
\begin{equation}\label{20jan2}
   {\mathcal E}(x,  Q,P_1,P_2) :=  \sum_{q\sim  Q\atop (q,a)=1} \Bigl\vert 
\underset{P_1< n\leqslant P_2,\  x/2<nt\leqslant x\atop nt\equiv a \bmod q}{ \sum{}^*\ \ \  \sum} \Lambda (n)  -{1\over \varphi (q)}
\underset{P_1< n\leqslant P_2,\ x/2<nt\leqslant x\atop (nt, q)=1}{ \sum{}^*\ \ \  \sum} \Lambda (n)\,
 \Bigr\vert.
\end{equation}
Note that the variable $m$ is now called $t$ to prepare the applications of some lemmas of  \S \ref{divisorfunctions} and that, in the summation of \eqref{20jan2}, we never $nt=a$ for sufficiently large $x$; this is a consequence of \eqref{hurry} and the fact that $a$ is fixed.  The sum
${\mathcal E}(x,  Q,P_1,P_2)$   is the analogue of $\mathcal E (x,Q)$ introduced in \cite[p.388]{BFI2}. Gathering the above remarls, 
  the proof 
 of  \eqref{16jan1} is equivalent to the proof of  the inequality
 \begin{equation}\label{16jan4}
 \mathcal E (x,  Q,P_1,P_2)\ll       x\cdot {(\log y)^2\over \log x} \cdot  (\log \log x)^{B_2},
 \end{equation}
 under the restrictions \eqref{16jan2} and \eqref{hurry} and the condition $Q^2\leqslant xy$.

\subsection{Preparation of the variable $t$}  In \eqref{20jan2}, the variable $t$ may have prime divisors less than the parameter  $z$ defined in \eqref{sam1}. This implies that  we cannot directly use the characteristic function of the set $\{t\}$ to build (by convolution  with other variables)  a sequence $(\beta_n)$ in order to apply one the propositions  of the \S\ref{equidistribution} -- \ref{rabbit}, since   the assumption $(A_3(x))$ may be  not   satisfied. To circumvent this almost primality condition, we can proceed as follows. We factorize each $t$ as $t=t^\dag \cdot t^\ddag$, where
$$
t^\dag =\prod_{p^\nu\, \Vert \, t\atop p<z} p^\nu.
$$
This factorization is unique. Usually, $t^\dag$ is small  compared with $t$ since we have (see \cite[Theorem 07 p.4]{HaTe})
\begin{lemma}\label{19:12}
There exists an absolute positive $c_0$, such that,  uniformly for $     v\geqslant u\geqslant 2 $ and $ x\geqslant 2$  we have the inequality
$$
\Theta (x; u,v):=\bigl\vert \{ n\leqslant x\ ;\, \prod_{p^\nu\, \Vert \, n\atop p\leqslant u}p^\nu \geqslant v\}
\bigr\vert \ll  x \exp\Bigl( -c_0\, {\log v\over \log u}\Bigr).
$$
\end{lemma}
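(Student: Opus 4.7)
The plan is to apply Rankin's trick combined with the fundamental lemma of sieve theory. Each $n \geqslant 1$ factors uniquely as $n = n^\dag n^\ddag$ with $n^\dag$ the $u$-smooth part and $n^\ddag$ having all prime factors $> u$, so $(n^\dag, n^\ddag) = 1$. Since $n^\dag \geqslant v$ implies $(n^\dag/v)^\sigma \geqslant 1$ for any $\sigma > 0$, I would start from
$$
\Theta(x; u, v) \leqslant v^{-\sigma}\sum_{n \leqslant x}(n^\dag)^\sigma = v^{-\sigma}\sum_{\substack{a\leqslant x\\ P^+(a)\leqslant u}} a^\sigma\, \Phi(x/a; u),
$$
where $\Phi(y; u) := |\{b \leqslant y : P^-(b) > u\}|$.

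Next I would invoke the fundamental lemma (or Brun's pure sieve) to obtain $\Phi(y; u)\ll y/\log u$ uniformly for $y \geqslant u$; the complementary range $y < u$ forces $b = 1$ and is easily checked to give a negligible contribution. Inserting this bound and extending the $a$-sum to all $u$-smooth integers yields
$$
\Theta(x; u, v) \ll \frac{x}{\log u}\cdot v^{-\sigma}\,\prod_{p \leqslant u}\bigl(1 - p^{\sigma - 1}\bigr)^{-1}.
$$

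Finally, I would set $\sigma = c_0/\log u$ for a small absolute constant $c_0 > 0$. Writing $\log\prod_{p\leqslant u}(1-p^{\sigma-1})^{-1} = \sum_{p\leqslant u}p^{\sigma-1}+O(1)$, and Taylor-expanding $p^\sigma = 1 + \sigma\log p + O(\sigma^2\log^2 p)$, the Mertens estimates $\sum_{p\leqslant u} 1/p = \log\log u + O(1)$ and $\sum_{p\leqslant u}\log p/p = \log u + O(1)$ together with the routine bound $\sum_{p\leqslant u}\log^2 p/p \ll \log^2 u$ produce
$$
\sum_{p\leqslant u} p^{\sigma - 1} = \log\log u + c_0 + O(c_0^2) + O(1) = \log\log u + O(1),
$$
so $\prod_{p\leqslant u}(1 - p^{\sigma-1})^{-1}\ll \log u$. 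This exactly cancels the $(\log u)^{-1}$ from the sieve, leaving
$$
\Theta(x; u, v)\ll x\cdot v^{-c_0/\log u} = x\exp\!\Bigl(-c_0\,\frac{\log v}{\log u}\Bigr).
$$

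The main obstacle is the precise cancellation in the last step: a crude bound $p^\sigma\leqslant e^{c_0}$ would give $\prod\ll(\log u)^{e^{c_0}}$, which strictly exceeds $\log u$ for any $c_0 > 0$ and hence cannot be absorbed by the sieve saving. The Taylor expansion together with Mertens' second theorem is essential to show that the linear-in-$\sigma$ correction $\sigma\sum\log p/p$ is merely $O(1)$, so the Euler product truly stays at size $\log u$. Without this refinement, the claimed uniform exponential bound fails for $v$ of intermediate size (say $v = u^{\log\log u}$).
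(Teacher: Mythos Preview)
The paper does not actually prove this lemma: it simply cites \cite[Theorem~07, p.~4]{HaTe}. Your argument via Rankin's trick combined with the sieve bound $\Phi(y;u)\ll y/\log u$ is a standard route to this estimate and is correct in outline.

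There is, however, one step you pass over too quickly. The complementary range $x/a<u$ forces $b=1$, so before Rankin its contribution to $\Theta(x;u,v)$ is exactly
$\bigl|\{a:v\leqslant a\leqslant x,\ P^+(a)\leqslant u\}\bigr|\leqslant\Psi(x,u)$.
For this to be $\ll x\exp(-c_0\log v/\log u)$ you need the uniform smooth--number bound $\Psi(x,u)\ll x\exp(-c'\log x/\log u)$ for some absolute $c'>0$ (then use $v\leqslant x$, which may be assumed since otherwise $\Theta=0$). This bound is classical but is \emph{not} a triviality: the naive Rankin estimate for $\Psi(x,u)$ carries the same parasitic $\log u$ factor that you worked hard to cancel in the main range, so ``easily checked'' undersells what is required. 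One must either optimise the Rankin parameter more carefully or simply quote the result (see for instance \cite[Ch.~III.5]{Te}). With that input your proof is complete.
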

Let  $W_0$ be a number which satisfies
\begin{equation}
\label{defT0}
W_0\sim \exp\Bigl({\log x\over \sqrt{ \log \log x}}\Bigr).
\end{equation}
By Lemma \ref{19:12}, the contribution of the triples  $(q,n,t)$ such that $t^\dag > W_0$ to the right part of \eqref{20jan2} is
\begin{equation}\label{station}
\ll \mathcal L \Bigl(\underset{  x/2 < nt\leqslant x \atop t^\dag >W_0}{\sum{}^* \ \sum} \tau (nt-a) +     \underset{   \ x/2< nt\leqslant x \atop t^\dag >W_0}{\sum{}^* \ \sum} 1\Bigr).
\end{equation}
We  write $m=nt$, to see that the expression \eqref{station} is  
$$
\ll \mathcal L \, \sum_{x/2 <m\leqslant  x  \atop m^\dag >W_0} \tau (m) \tau (m-a)
\ll \mathcal L  \cdot \Theta^{1\over 3} (x;  z, W_0) \cdot \Bigl( \sum_{m\leqslant  x} \tau^3 (m)\Bigr)^{1\over 3} \cdot\Bigl( \sum_{x/2 < m\leqslant x } \tau^3 (m-a)\Bigr)^{1\over 3},
$$
by H\" older's inequality.  Appealing to Lemmas \ref{linnik1} \& \ref{19:12}, we see that the above term is
$$
\ll x\cdot \mathcal L^6\cdot \exp\Bigl( -{c_0\over 3} (\log \log x)^{3\over 2}\Bigr) \ll x\mathcal L^{-C}\text{ for all } C,
$$
which is acceptable in view of \eqref{16jan4}, that we want to prove.
These considerations allow to  replace $t$ by $t=uw$ where $u$ and $w$ have all their prime factors either greater than $z$ or smaller than $z$. In others words we are reduced to  prove the inequality \eqref{16jan4} for  the sum $\tilde {\mathcal E} (x,Q,P_1,P_2)$ defined by
\begin{equation}\label{deftildeE}
\tilde{\mathcal E}(x, Q,P_1,P_2) :=  \sum_{q\sim  Q\atop (q,a)=1} \Bigl\vert 
\underset{P_1< n\leqslant P_2,\ x/2< nuw\leqslant x\atop nuw\equiv a \bmod q}{ \sum{}^*\  \sum{}^*\ \  \sum{}^\dag} \Lambda (n)  -{1\over \varphi (q)}
\underset{P_1< n\leqslant P_2,\ x/2< nuw\leqslant x\atop (nuw, q)=1}{ \sum{}^*\ \sum{}^*\ \  \sum{}^\dag} \Lambda (n)\,
 \Bigr\vert,
\end{equation}
where the ${}^\dag$--symbol means that $w$ has all its prime factors less than $z$ and where $w$ is a small variable, which means that it satisfies
\begin{equation}\label{dim1}
w\leqslant W_0.
\end{equation}
 \subsection{Application of a combinatorial identity} To transform  
 the function $\Lambda$ into bilinear forms, we appeal to the identity of Heath--Brown (see \cite [Prop. 13.3]{IwKo} for instance)
 \begin{lemma}\label{HeathB}
 Let $J\geqslant1$ and $n <2x$. We then have the equality
 $$
 \Lambda (n)= \sum_{j=1}^J (-1)^j \biggl({ J\atop j}\biggr) \sum_{m_1,\dots,m_j  \leqslant x^{1/J}}\mu (m_1)\cdots \mu (m_j)\sum_{m_1\dots m_jn_1\dots n_j=n}\log n_1.
 $$
 \end{lemma}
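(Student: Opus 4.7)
The plan is to establish the identity by manipulating Dirichlet series and then reading off coefficients. Set $z = x^{1/J}$ and let $L(s) = -\zeta'(s)/\zeta(s) = \sum_{n\geqslant 1} \Lambda(n) n^{-s}$. Split the Möbius generating series as $1/\zeta(s) = F(s) + G(s)$ where
\begin{equation*}
F(s) = \sum_{m\leqslant z} \mu(m) m^{-s}, \qquad G(s) = \sum_{m > z} \mu(m) m^{-s}.
\end{equation*}
Multiplying by $\zeta(s)$ gives the crucial relation $1 - \zeta(s) F(s) = \zeta(s) G(s)$.

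The next step is to raise both sides to the $J$-th power: $(1-\zeta(s)F(s))^J = \zeta(s)^J G(s)^J$. Expanding the left-hand side by the binomial theorem and rearranging yields
\begin{equation*}
1 \;=\; \sum_{j=1}^{J} \binom{J}{j}(-1)^{j-1} \bigl(\zeta(s) F(s)\bigr)^{j} \;+\; \zeta(s)^J G(s)^J.
\end{equation*}
Multiplying through by $L(s)$ produces an identity of formal Dirichlet series whose coefficient of $n^{-s}$, on the left, is $\Lambda(n)$. For the first term on the right, I would use that $L(s)\zeta(s)^j = -\zeta'(s)\zeta(s)^{j-1}$ has $n$-th coefficient $\sum_{n_1\cdots n_j = n} \log n_1$, while $F(s)^j$ contributes $\mu(m_1)\cdots\mu(m_j)$ over tuples $(m_1,\dots,m_j)$ with each $m_i\leqslant z$. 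Combining these gives precisely the inner structure that appears in the stated identity.

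The remaining point is the vanishing of the contribution from $L(s)\zeta(s)^J G(s)^J$ for the range of $n$ under consideration. Since $G(s)$ is supported on integers $> z$, every coefficient of $G(s)^J$ is supported on integers $> z^J = x$; multiplication by $L(s)\zeta(s)^J$ preserves this support (it can only enlarge the divisors involved). Thus for $n\leqslant x$ the error term vanishes identically, and for the slightly wider range $n<2x$ one applies the argument with the truncation parameter adjusted to $(2x)^{1/J}$, which does not affect the combinatorial form of the identity. The only real subtlety is keeping track of signs in the binomial expansion and in the passage from $-\zeta'(s)/\zeta(s)\cdot\zeta(s)^j$ to $\sum \log n_1$, which is routine but must be done carefully to recover the stated coefficients $(-1)^j\binom{J}{j}$.
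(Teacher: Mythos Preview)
The paper does not give its own proof of this lemma; it simply cites \cite[Prop.~13.3]{IwKo}. Your Dirichlet-series approach is the standard one and is correct in outline, but there is a genuine gap in your handling of the range $x<n<2x$.

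You propose to cover $x<n<2x$ by ``adjusting the truncation parameter to $(2x)^{1/J}$''. This does not prove the stated identity: with that adjustment the inner sums run over $m_i\leqslant(2x)^{1/J}$ rather than $m_i\leqslant x^{1/J}$, so you would be establishing a different formula. The correct argument is direct. The $n$-th coefficient of the error term $L(s)\zeta(s)^J G(s)^J$ is $\sum_{ab=n}c(a)d(b)$, where $c(a)$ is the $a$-th coefficient of $-\zeta'(s)\zeta(s)^{J-1}$ and $d(b)$ is the $b$-th coefficient of $G(s)^J$. Since $d(b)\neq 0$ forces $b=m_1\cdots m_J$ with each $m_i>x^{1/J}$, we have $b>x$; for $n<2x$ this gives $a=n/b<2$, hence $a=1$. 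But $c(1)=\sum_{n_1\cdots n_J=1}\log n_1=\log 1=0$, so the error term vanishes for every $n<2x$, not merely for $n\leqslant x$. This is the point you are missing.

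A minor remark on signs: your derivation naturally yields $(-1)^{j-1}\binom{J}{j}$, not $(-1)^{j}\binom{J}{j}$, so you will not ``recover the stated coefficients'' as written; the discrepancy is a typo in the paper's statement (check $J=1$), and is harmless for the application since only the absolute values $\tilde{\mathcal E}_j$ are used afterwards.
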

 We apply this lemma  to $\Lambda (n)$ inside  \eqref{deftildeE} with $J=7$.   It gives
 the inequality
 \begin{equation}\label{17jan1}
 \tilde {\mathcal E} (x,  Q,P_1,P_2) \leqslant \biggl( {7\atop 4} \biggr) \sum_{j=1}^7  \, \tilde{ \mathcal E}_j (x,  Q,P_1,P_2),
 \end{equation}
 with
 \begin{align}\label{17jan2}
\tilde{ \mathcal E}_j (x,  Q,P_1,P_2):= \sum_{q\sim  Q\atop (q,a)=1} \Bigl\vert &\underset{m_1\cdots m_jn_1\cdots n_j uw\equiv a \bmod q}{\sum{}^* \cdots \sum{}^*\sum{}^\dag}\mu (m_1)\cdots \mu (m_j) \log n_1\\
 & -
 {1\over \varphi (q)}  \underset{(m_1\cdots m_jn_1\cdots n_j uw , q)=1}{\sum{}^* \cdots \sum{}^*\sum{}^\dag}\mu (m_1)\cdots \mu (m_j) \log n_1
 \Bigr\vert ,\nonumber
 \end{align}
 where the variables of summation satisfy the inequalities 
\begin{align}\label{17jan4}
x/2<m_1\cdots m_jn_1\cdots n_j uw\leqslant x,\ & P_1 <m_1\cdots m_jn_1\cdots n_j \leqslant P_2\\
&\text{ and } m_1,\dots , m_j \leqslant D,\nonumber
 \end{align}
 where $D$ is any number $\geqslant x^{1\over 7}$.
 and where  $w$ satisfies  \eqref{dim1}.
 \subsection{Dissection of the set of summation.} In \eqref{17jan4}, the variables $m_i$, $n_i$, $u$ and $w$ have to satisfy several multiplicative inequalities. To make these variables independent we process as usual in such problems, see \cite[p.388]{BFI2} for instance. We define a parameter $\delta$ satisfying $x^{-\eps} < \delta <1$, and introduce the notation
 $$
 g\simeq G $$
 to mean that the  integer variable   $g$ satisfies $G\leqslant g< (1+\delta) G$.  Let
 $$
 \mathcal D: =\bigl\{ (1+\delta)^\nu\,;\ \nu=0,\,1, \, 2,\dots \bigr\}.
 $$
 To transform \eqref{17jan4}, we precise \eqref{defT0} and the choice of $D$ by imposing
 $$
 D,\, W_0\in \mathcal D  \text{ and }  D\simeq x^{1\over 7}.   
 $$
  The conditions  \eqref{17jan4} are equivalent to
 \begin{equation}\label{fire}
 m_i\simeq M_i,\ n_i\simeq N_i \ (1\leqslant i \leqslant j), \ u\simeq U
 \text{ and } w\simeq W,
\end{equation}
 for some numbers $M_1$,..., $M_j$, $N_1$,..., $N_j$, $U$ and $W$ from $\mathcal D$ satisfying 
 \begin{align}\label{17jan5}
 \begin{cases}x/2<M_1\cdots M_jN_1\cdots N_j UW\leqslant x,\\ 
  P_1 <M_1\cdots M_jN_1\cdots N_j \leqslant P_2,\\
   M_1,\dots , M_j \leqslant D/(1+\delta),\\
   W\leqslant  W_0/(1+\delta),
 \end{cases}
 \end{align}
 unless the $2j+2$--uple   $(m_1,\cdots ,m_j,n_1,\cdots ,n_j ,u,w)$ is too near from  some edge  of the dissection, that means satisfies  at least one of the following four  conditions
\begin{align}\label{21jan1}
\begin{cases}
  x< m_1\cdots m_jn_1\cdots n_j uw\leqslant x(1+\delta)^{2j+2},\\
  x/2< m_1\cdots m_jn_1\cdots n_j uw\leqslant (x/2)(1+\delta)^{2j+2},\\
  P_1 <m_1\cdots m_jn_1\cdots n_j\leqslant P_1 (1+\delta)^{2j}\   \text{ and } \  m_1\cdots m_jn_1\cdots n_j uw\leqslant  x,  \\
   P_2 <m_1\cdots m_jn_1\cdots n_j\leqslant P_2 (1+\delta)^{2j}\  \text{ and } \  m_1\cdots m_jn_1\cdots n_j uw\leqslant  x.  \\
\end{cases}
\end{align}
Write $r:=m_1\cdots m_jn_1\cdots n_j$ and $t:=uw$ (note that each $t$ can be written in an unique way in that form, since $u$ (resp. $w$) has all its prime factors greater (resp. less) than $z$). It is easy to see that the contribution (denoted by $\mathcal C_{1,j}$)
to $\tilde{\mathcal E}_j (x,Q,P_1,P_2)$ of  the $(2j+2)$--uples
$(m_1,\cdots ,m_j,n_1,\cdots ,n_j ,u,w)$ which satisfy  at least one of the four conditions of \eqref{21jan1} is 
\begin{align}\label{21jan2}
\mathcal C_{1,j}&\leqslant \mathcal L \,\Big\{\sum_{x <rt\leqslant x(1+\delta)^{2j+2}\text{ or }\atop x/2 <rt\leqslant (x/2)(1+\delta)^{2j+2} }  +  
  \sum_{ P_1 <r\leqslant P_1 (1+\delta)^{2j}\atop x/2<rt\leqslant x} +\sum_{ P_2 <r\leqslant P_2 (1+\delta)^{2j}\atop x/2 <rt\leqslant x} \Bigr\}\, \tau (rt-a) \tau_{2j} (r)\\
&+\mathcal L\,\sum_{q\sim Q}{1\over \varphi (q)}  \Big\{\sum_{x <rt\leqslant x(1+\delta)^{2j+2} \text{ or }\atop x/2 <rt\leqslant (x/2)(1+\delta)^{2j+2}}  +  
  \sum_{ P_1 <r\leqslant P_1 (1+\delta)^{2j}\atop x/2< rt\leqslant x} +\sum_{ P_2 <r\leqslant P_2 (1+\delta)^{2j}\atop x/2< rt\leqslant x} \Bigr\} \,  \tau_{2j} (r).\nonumber
\end{align}
   Write $s:= rt$. Then we see that in the six inner  summations of  \eqref{21jan2},  $s$ belongs to some set $\mathcal S\subset [x/2, 2x]$, which satisfies 
\begin{equation}\label{21jan3}
\vert \mathcal S\vert  \ll x \bigl[ (1+\delta)^{2j+2} -1\bigr] +x \log (1+\delta)^{2j}\ll \delta x.
\end{equation}
Hence \eqref{21jan2} implies that
\begin{align}
 \mathcal C_{1,j} &\ll \mathcal L\sum_{s\in \mathcal S} \tau (s-a) \tau_{2j } (s) + \mathcal L   \sum_{s\in \mathcal S} \tau_{2j} (s)\nonumber \\
 &\ll \mathcal L \cdot \vert \mathcal S \vert^{1\over 3} \cdot \Bigl\{ \sum_{x/2 \leqslant s\leqslant 2x } \tau^3 (s-a)\Bigr\}^{1\over 3} \cdot  \Bigl\{ \sum_{x/2 \leqslant s\leqslant 2x } \tau_{2j}^3 (s)\Bigr\}^{1\over 3}\nonumber  \\
 & \ll \delta^{1\over 3}\,  x\,\mathcal L^{B_3},\label{21jan200}
\end{align}
for some absolute positive $B_3$
 by appealing to \eqref{21jan3} and to Lemma \ref{linnik1}.
 
 Finally, remark that if $n_1\simeq N_1$, the function $n_1 \mapsto \log n_1$ is almost constant, more precisely, we have
\begin{equation}\label{logLog}
 \log n_1 =\log N_1 + O(\delta).
 \end{equation} Hence, replacing $\log n_1$ by $\log N_1$ in the expression \break $\tilde{\mathcal E}_j (x,Q,P_1,P_2)$, we create a global  error $\mathcal C_{2,j}$ that we majorize by 
 \begin{align}
 \mathcal C_{2,j}& \ll \delta \sum_{q\sim Q\atop (q,a)=1} \Bigl( \sum_{x/2 < n\leqslant x \atop n\equiv a \bmod q} \tau_{2j+1} (n) +
 {1\over \varphi (q)} \sum_{x/2 < n\leqslant x} \tau_{2j+1} (n)\Bigr)\nonumber\\
 &\ll \delta \sum_{x/2<n\leqslant x} \tau (n-a)\tau_{2j+1} (n)  +\delta \, x\,   \mathcal L^{2j} \nonumber\\
 &\ll \delta \, x\, \mathcal L^{B_4},\label{21jan100}
 \end{align}
 for some positive  absolute $B_4$. Here also, we used Cauchy--Schwarz inequality and Lemma \ref{linnik1} to prove \eqref{21jan100}.
 The constant $\delta$ is at our disposal, so we fix
 \begin{equation}\label{21jan10}
 \delta := \mathcal L^{-3(2+B_3+B_4)}. 
 \end{equation}
 Hence, by  \eqref{21jan200} \& \eqref{21jan100},  the error terms $\mathcal C_{1,j}$ and $\mathcal C_{2,j}$ both satisfy
 $$
 \mathcal C_{1,j},\ \mathcal C_{2,j} \ll x \mathcal L^{-2},
 $$
 which, in view of \eqref{17jan1},  is acceptable (compare with \eqref{16jan4}).

 It remains to prove, for $1\leqslant  j\leqslant  7$,  the inequality 
\begin{align}\label{15:18}
 \mathcal F_j (x,  Q,P_1,P_2)\ll 
  x\, \Bigl( {\log y\over \log x}\Bigr)^2\, (\log \log x)^{B_2},  
 \end{align}
 $\bullet$ where 
\begin{equation}\label{13:17}
  \mathcal F_j (x,  Q,P_1,P_2) :=\sum_{\mathcal M,\, \mathcal N, \,U,\, W}
  \mathcal F_j(\mathcal M, \mathcal N, U,W,   Q),
  \end{equation}
$\bullet$ with $\mathcal M= (M_1,\dots, M_j)$, and $\mathcal N = (N_1,\dots, N_j)$, 

\noindent $\bullet$ where the numbers $M_i$, $N_i$ ($1\leqslant i\leqslant  j$), $U$ and $W$ are taken in the set $\mathcal D$  and satisfy \eqref{17jan5},

\noindent $\bullet$  and where 
 $$
 \mathcal F_j(\mathcal M, \mathcal N, U,W,  Q): = \sum_{q\sim  Q\atop  (q,a)=1  } \bigl\vert
 \mathcal F_j (\mathcal M ,\mathcal N ,U,W; q,a)\bigr\vert,
 $$
$ \bullet$ with
\begin{align}\label{defF}
 \mathcal F_j (\mathcal M ,\mathcal N ,U,W; q,a):=&
 \underset{m_i\simeq M_i,\dots, n_i\simeq N_i \,  (1\leqslant i\leqslant j),\  u\simeq U,\, w\simeq W\atop m_1\cdots m_j n_1\cdots n_j uw\equiv a \bmod q}{\sum{}^*\cdots \sum{}^*\ \sum{}^*\ \sum{}^\dag} \mu (m_1)\cdots \mu (m_j)\\
& -{1\over \varphi (q)}
  \underset{m_i\simeq M_i,\dots, n_i\simeq N_i\, (1\leqslant i\leqslant j),\  u\simeq U,\, w\simeq W\atop ( m_1\cdots m_j n_1\cdots n_j uw,q)=1}{\sum{}^*\cdots \sum{}^*\  \sum{}^*\sum{}^\dag}\mu (m_1)\cdots \mu (m_j). \nonumber
\end{align}
Remark that in the conditions of summation \eqref{defF}, the variable $w$ has all its prime factors smaller than $z$, this fact creates extra difficulty compared with \cite{BFI2}. The factor $\log n_1$ present in \eqref{17jan2} has  now disappeared thanks to \eqref{logLog} with the price of an extra log--factor in right part of  \eqref{15:18} that we want to prove.
\subsection{The boundary configuration}\label{boundaryconf} We follow the technique of \cite[\S\, 13]{BFI2}. We fix 
\begin{equation}\label{defDelta}
  \Delta := y^6, 
\end{equation}
hence $\Delta$ satisfies $1\leqslant \Delta < x^\eps$ by \eqref{16jan2}.
 Given an integer $r\geqslant 1$ and  a finite sequence  $(D_1,\dots, D_r)$ of  elements of $\mathcal D$ such that
$$
D_1\geqslant \cdots \geqslant D_r\geqslant D \text { and } D_1\cdots D_r <x,
$$
we say that it is a {\it boundary configuration} if one of the following holds
$$
r=4,\quad D_1 \leqslant \Delta D_2\quad  \text{and} \quad D_3 \leqslant  \Delta D_4,\leqno (B_4)
$$
$$
r=5, \quad D_3 \leqslant \Delta D_5, \leqno(B_5)
$$
$$
r=6, \quad D_4 \leqslant \Delta D_6. \leqno (B_6)
$$ 
Let $1\leqslant j\leqslant 7$. A sequence $(\mathcal M, \mathcal N, U,W)=(M_1,\dots, M_j, N_1,\dots, N_j, U,W)$ is said to be {\it exceptional  of type} $B_r$ (with $4\leqslant r\leqslant 6$) if it can be partioned into subsets whose products form a boundary configuration 
$(D_1,\dots, D_r)$ satisfying $(B_r)$. A sequence is {\it good } if it is not exceptional of any type. By \eqref{13:17} we get the inequality
\begin{align}\label{13:19}
& \mathcal F_j (x, Q,P_1,P_2)\\
 & =\sum_{r=4}^6\sum_{(\mathcal M, \mathcal N,\, U,\,W)\atop \text{exceptional of type } B_r}
  \mathcal F_j(\mathcal M, \mathcal N, \, U,\, W,  Q) + \sum_{(\mathcal M, \mathcal N, U,W)\atop \text{ good}  }
  \mathcal F_j(\mathcal M, \mathcal N, \, U,\, W,  Q), \nonumber
  \end{align}
  where $(\mathcal M, \mathcal N, U,W)$ satisfy \eqref{17jan5}.
  We now treat the contribution of  the $(\mathcal M, \mathcal N, U,W)$
  of type $(B_r)$ to the right part of  \eqref{13:19}. We shall concentrate on $r=4$ (necessarily, we have $j\geqslant 2$). So we can partition the variables of summation $m_1,\dots,m_j, n_1,\dots, n_j, u,w$ into four products $d_1$, $d_2$, $d_3$, $d_4$ which 
  necessarily satisfy
  $$
 n:= m_1\dots m_j n_1\dots n_j uw=d_1d_2d_3d_4 \leqslant 2x,
  $$
  and
  $$
  d_1\geqslant d_2 \geqslant d_3\geqslant d_4 \geqslant D\simeq x^{1\over 7}, \ d_1\leqslant 2\Delta d_2,\ d_3\leqslant 2 \Delta d_4.
  $$
 The number of representations of $n$ in the form
 $n=m_1\dots m_jn_1\dots n_juw$ is bounded by $\tau_{2j+1} (n)$. The study must be seperated in four cases according to the index $i$ ($1\leqslant i\leqslant 4$) such that the variable $w$ appears in the 
 constitution of the variable $ d_i$ in the above partition. We   shall only write the case where $i=1$ with details (the other cases 
$i=2$, $i=3$ or $i=4$ can be similarly handled by using variants  \eqref{13:50} quoted at the end  of Lemma \ref{22jan1}).  Note that, when $i=1$,  the variables $d'_1:=d_1/w$, $d_2$, $d_3$ and $d_4$ all have their prime factors greater than  $z$, but such a condition does not apply to $w$. 
We fix     $y'=2\Delta$ and we write 
\begin{align}\label{sam2}
&\sum_{\substack{(\mathcal M,\, \mathcal N,\, U,\, W) \text { of type } B_4\\
 w\text{ appears  in } d_1}} \mathcal F_j (\mathcal M, \mathcal N, U,W;q,a)\nonumber\\
 &\leqslant 
  \underset
  {\substack{wd'_1d_2d_3d_4\leqslant 2x \\
  D\leqslant d_4\leqslant d_3 \leqslant d_2 \leqslant w d'_1\\
   d_3\leqslant y' d_4,\, wd'_1\leqslant y' d_2\\
 wd'_1d_2d_3d_4\equiv a \bmod q }}
 {\sum\ \sum{}^*\ \sum{}^*\  \sum{}^*\ \sum{}^*}
  \tau_{2j+1} (d_4)\tau_{2j+1} ( d'_1d_2d_3)\nonumber \\
  &+{1\over \varphi ( q)}
  \underset
  {\substack{wd'_1d_2d_3d_4\leqslant 2x \\
  D\leqslant d_4\leqslant d_3 \leqslant d_2 \leqslant w d'_1\\
   d_3\leqslant y' d_4,\, wd'_1\leqslant y' d_2 
  }}
 {\sum\ \sum{}^*\ \sum{}^*\  \sum{}^*\ \sum{}^*}
  \tau_{2j+1} (d_4)\tau_{2j+1} ( d'_1d_2d_3)\nonumber\\
&  :=F_{4,1} (q,a)+{1\over \varphi (q)} F_{4,1},
\end{align}
 by definition. The index $1$ is to remember that the variable $w$ is glued to $d'_1$.
   Formula \eqref{11:09} of Lemma \ref{22jan1}  gives the bound 
\begin{equation}\label{sam4}
 F_{4,1} \ll (\log \log x)\cdot {x\over \log 2D} \cdot {(\log 2y')^2\over \log 2x}\cdot 
 \Bigl({\log x\over \log z}\Bigr)^{8j+4}\ll x\cdot\Bigl( {\log  y\over \log  x}\Bigr)^2 (\log\log x)^{B_5},
\end{equation}
 by  \eqref{sam1}, \eqref{defDelta} and the inequality $\tau_{2j+1} (d'_1d_2d_3)\leqslant \tau_{2j+1} \tau_{2j+1} (d_2)\tau_{2j+1} (d_3)$. Here $B_5$ is an absolute constant. Summing over $q\sim  Q,\ (q,a)=1$, we recognize the second term in the inequality  \eqref{15:18}. By \eqref{sam2}, we must give an upperbound for 
 \begin{equation}\label{sam3} 
0\leqslant \sum_{q\sim Q\atop (q,a)=1} F_{4,1}(q,a)\leqslant 
 \sum_{q\sim Q\atop (q,a)=1} \Bigl( 
F_{4,1} (q,a)   -{1\over \varphi (q)} 
  \sum_{b\bmod q\atop (b,q)=1} F_{4,1}(q,b)   \Bigr) +\sum_{q  \sim Q\atop (q,a)=1}{1\over \varphi (q)} F_{4,1}.
\end{equation}
The second term  on the right part of \eqref{sam3} is treated as \eqref{sam4}. For the first one, we appeal to  Proposition \ref{SAM1} with the following choice of the parameters 
$$
\xi (\ell, m) =\underset{wd'_1d_2=\ell, d_3 =m\atop d_3\leqslant d_2\leqslant wd'_1\leqslant y' d_2} {\sum \ \sum{}^*\  \sum{}^*\ \sum{}^*}
    \tau_{2j+1}  ( d'_1d_2d_3)
  $$
  and
 \begin{equation}\label{18:19}
  \beta_n =\underset{ D \leqslant d_4=n< (3x)^{1\over 4}}{\sum{}^*}\tau_{2j+1} (d_4),
 \end{equation}
  $y_1=1$ and $y_2 =y'$. By \eqref{sam3}, we deduce 
  \begin{equation}\label{sam5}
  \sum_{q\sim Q\atop (q,a)=1}
  F_{4,1} (q,a)\ll x \mathcal L^{-2}  
 +   x\cdot \Bigl({ \log y \over \log x}\Bigr)^2 \cdot  (\log \log x)^{B_5},
 \end{equation}
 The same procedure applies when $w$ participates to $d_2$ or $d_3$. We then apply \eqref{13:50} of Lemma \ref{22jan1}. However, when $w$ participates to $d_4$, the choice of the function $\beta_n$ given in \eqref{18:19} is not correct, since the assumption $(A_3(x))$ is not satisfied to apply Proposition \ref{SAM1}. We then choose $n=d_3$, $m=d_4$, $y_1=1/y'$ and $y_2=1$ to apply this Proposition. Note the inequality $x^{1\over 7} \leqslant n\leqslant 3 x^{2\over 7}$ in all the cases.  
 
 It remains to deal with the contribution of the cases $r=5$ and $r=6$ to the right part of \eqref{13:19}. The study is the same as for $r=4$, but Lemma \ref{15:10} will now replace Lemma \ref{22jan1}.

 In conclusion, we proved that the contribution of the boundary is acceptable, which means  that it satisfies the inequality
 \begin{equation}\label{16:30}
 \sum_{r=4}^6\sum_{(\mathcal M, \mathcal N,\, U,\,W)\atop \text{exceptional of type } B_r}
  \mathcal F_j(\mathcal M, \mathcal N, \, U,\, W,   Q) \ll 
  x\cdot \Bigl({ \log y \over \log x}\Bigr)^2 \cdot  (\log \log x)^{B_5},
 \end{equation}
 for $1\leqslant j\leqslant 7$, with an absolute $B_5$.

\subsection{The interior}\label{interior} This subsection has to be compared with \cite[\S 15]{BFI2}.   For any $1\leqslant j\leqslant 7$, the number of good subsequences  $(\mathcal M, \mathcal N, U,W)$   is $O( (\delta^{-1} \mathcal L)^{16})=O(\mathcal L^{B_6})$, by the choice \eqref{21jan10}, for some  absolute constant $B_6$. Hence, by \eqref{13:19} and \eqref{16:30},  the inequality \eqref{15:18} will be proved as soon as, for each $j$, with $1\leqslant j\leqslant 7$, for each good sequence  $(\mathcal M, \mathcal N, U,W)=(M_{1}, \dots,M_{j}, N_{1},\dots, N_{j}, U,W)$ satisfying  
\begin{align}\label{3fev1}
 \begin{cases}x/2 <M_1\cdots M_jN_1\cdots N_j UW\leqslant x,\\ 
   M_1,\dots , M_j \leqslant D/(1+\delta),\\
   W\leqslant  W_0/(1+\delta),
 \end{cases}
 \end{align}
  we have the inequality
\begin{equation}\label{11:22}
\mathcal F_j (\mathcal M, \mathcal N, U,W,Q)\ll x\mathcal L^{-B_7},
\end{equation}
where $B_7$ is some absolute constant, for instance  $B_7=1+B_6$. The conditions \eqref{fire},  \eqref{defF} and  \eqref{3fev1}   show that the variable $u$ behaves like any variable  $n_i$. So it is natural to replace the name of the variable $u$  
by $n_{j+1}$ and $U$ by  $N_{j+1}$. We put $\tilde {\mathcal N} =(\mathcal N, U)$ and     $(\mathcal M, \mathcal N, U,W)$ becomes $(\mathcal M, \tilde{\mathcal N}, W)$. If   $(\mathcal M, \tilde{\mathcal N}, W)$ is  a good sequence, by symmetry, 
we may assume that
$$
N_1\geqslant N_2\geqslant \cdots\geqslant N_{j+1},
$$
and we denote by $N_{s+1}, \cdots, N_{j+1}$ those $N_i$ (possibly none) which are $\leqslant x^{{1\over 6}-\eps}$.

As in \cite{BFI2}, the rest of the proof consists in playing with the orders of magnitude of 
$M_i$($1\leqslant i \leqslant j$), $N_i$ ($1\leqslant i\leqslant j+1$)  and $W$ to apply one of the propositions of \S \ref{equidistribution},
\S \ref{convolof3}, \S \ref{convolof3fouvry} or \S \ref{rabbit}. However, $W$ is always small ($\leqslant  W_0\ll x^\eps$)  and the characteristic function of the integers $w\simeq W$ in question never satisfies $(A_3(x))$, we shall also play with  this small variable. This creates unexpected problems.
\vskip.2cm
\noindent $\bullet$ {\it Case 1.} $M_1\dots M_jN_{s+1}\dots N_{j+1}\geqslant x^\eps$.
  Then, there exists a partial product, say $V$,  of  some of these $M_i$ and $N_i$,   which satisfies $x^\eps
  \leqslant V \leqslant x^{{1\over 6}-\eps}.$ This is a consequence of the inequalities   $M_i\leqslant D$  and $x^{{1\over 6}-\eps} \geqslant N_{s+1}\geqslant \cdots \geqslant N_{j+1}$.
We apply Proposition \ref{17:09}  with $N=V$, and $M$ such that $MN= M_1\dots M_jN_1\dots N_{j+1}W$ (which is $\sim x/2$ by 
\eqref{3fev1}). We also define $\boldsymbol \beta$ as the convolution
of  the functions $(\mathfrak z \mu)$ and $(\mathfrak z)$, with respective support $\simeq M_i$ and $\simeq N_i$, where the $M_i$ and $N_i$ are those parameters  which participate to the partial product $V$. Hence, in that case \eqref{11:22} is proved.

So we are left with the case
\begin{equation}\label{assumption1}
V:=M_1\dots M_jN_{s+1}\dots N_{j+1}\leqslant x^\eps \text{ and }N_1\geqslant \cdots N_s \geqslant x^{{1\over 6}-\eps}.
\end{equation}
By \eqref{3fev1}, we necessarily have $1\leqslant s \leqslant 6$. The value of $s$ is an important parameter of our discussion.

\vskip .2cm
\noindent $\bullet$ {\it Case 2.} $s=6$. We partition $(\mathcal M, \tilde {\mathcal N}, W)$ into 
$$ (N_1VW)\geqslant N_2\geqslant N_3\geqslant N_4\geqslant N_5 \geqslant N_6\ (\geqslant x^{{1\over 6}-\eps}),$$
and since $(\mathcal M, \tilde{\mathcal N}, W)$ is good, we necessarily have $N_4>\Delta N_6$. We want to apply  Proposition \ref{fouvrytrilinear}, with $L:= (N_1VW)N_2$, $M:=N_4N_5N_6$ and $N:= N_3.$  We easily check the conditions $(S3)$, since, by \eqref{assumption1}, we have $L=x^{{1\over 3} +O(\eps)}$, $M=x^{{1\over 2}+O(\eps)}$ and $N=x^{{1\over 6}+O(\eps)}$. 

It remains to check the inequality $Q\leqslant (LN)\mathcal L^{-A_0}$ to deduce that, in that case, \eqref{11:22} is satisfied. We write
$$
x/2 \leqslant VW N_1N_2N_3N_4N_5N_6\leqslant 
VW N_1N_2N_3N_4^2N_5\Delta^{-1}\leqslant {VW\over \Delta} (N_1N_2N_3)^2,
$$
which implies $x/2 \leqslant (LN)^2/\Delta$, 
  from which we deduce 
  $$
  LN \geqslant (\Delta x/2)^{1\over 2}= (x/2)^{1\over 2} y^3\geqslant   Q y\geqslant Q \mathcal L^{A }, $$
by   \eqref{16jan2} and  \eqref{defDelta}. Finally, if we choose 
$A$ larger than $A_0 (=A_0 (B_7))$ where $A_0$ is  defined in Proposition 
\ref{fouvrytrilinear}, we have $Q\leqslant (LN)\mathcal L^{-A_0}$. Proposition \ref{fouvrytrilinear} is applicable and   \eqref{11:22} is proved   in that case.
\vskip .2cm
\noindent $\bullet$ {\it  Case 3.} $s=5$. Recall the relation (see \eqref{3fev1})
$$
N_{1}N_{2}N_{3}N_{4}N_{5}VW \asymp x.
$$
 We partition $(\mathcal M, \tilde{\mathcal N}, W)$ into
\begin{equation}\label{12:05}
(N_1VW)\geqslant N_2\geqslant N_3\geqslant N_4\geqslant N_5 (\geqslant x^{{1\over 6}-\eps}).
\end{equation}
Since $(\mathcal M, \tilde{\mathcal N}, W)$ is good, we have, by $(B_5)$, the inequality $N_3 >\Delta N_5$. We split the arguments in four cases.
\vskip .2cm
$\bullet$ {\it Case 3.1.}   $N_5N_4N_2 >Q \mathcal L^A$. As in \cite{BFI2}, we apply Proposition \ref{trilinear} with the conditions $(S2)$. The choices are $K=N_4$, $L=N_2N_5$ and $M=N_1VW N_3$.  We check the three inequalities of $(S2)$ as follows
$$
KL= N_5N_4N_2 >Q \mathcal L^A,$$
by hypothesis,
\begin{align*}
KL^2Q^2&= N_4(N_2N_5)^2 Q^2 < \Delta^{-1} N_5N_4N_3 N_2^2Q^2
\ll \Delta^{-1} xQ^2 \\
&\ll \Delta^{-1}x^2 y\ll x^2 y^{-3}\ll x^2\mathcal L^{-A},
\end{align*}
by the assumptions \eqref{16jan2} and \eqref{defDelta}.  Finally, we check
$$
K^2=N_4^2\ll (x/N_5)^{1\over 2} \ll  x^{{5\over 12}+{\eps\over 2}}\ll Qx^{-\eps}.
$$
It suffices to choose $A$ larger than $A_0 (=A_0 (B_7))$ as defined in Proposition \ref{trilinear} to conclude the proof of  \eqref{11:22} in that case.
\vskip .2cm
$\bullet$  {\it Case 3.2.}  $xQ^{-1} \mathcal L^{-A} < N_5N_4N_2 \leqslant Q\mathcal L^{ A}$.  Remark that the  last inequality implies  
$$  N_2\leqslant   x^{{1\over 6} + 3\eps}.$$
 As in \cite{BFI2}, we apply Proposition \ref{trilinear} with the conditions $(S1)$ with $K=N_2$ and $L=N_4N_3$ and $M=N_1VWN_5$. We check the three conditions of 
$(S1)$ as follows
$$
KL=N_4\,N_3\,N_2 > \Delta\, N_5\,N_4\,N_2 > \Delta\, x\,Q^{-1}  \mathcal L^{-A} >Q \mathcal L^A,
$$
the last inequality being a consequence of \eqref{16jan2} and \eqref{defDelta}. We also have
$$
K^2L^3 =N_2^2( N_4N_3)^3 \leqslant N_2^8  \ll   Qx\mathcal L^{-A},
$$
and
$$
K^4L^2 (K+L)\ll K^4 L^3 = N_2^4 (N_4N_3)^3\ll N_2^{10} \ll x^{{5\over 3}+ 30 \eps} <x^{2-\eps}.
$$
In that case also, we proved \eqref{11:22} by choosing $A$ larger than $A_0 =A_0 (B_7)$ as defined in Proposition \ref{trilinear}.

\vskip .2cm
$\bullet$  {\it Case 3.3.}  $x^{10\over 21} < N_5N_4N_2 \leqslant xQ^{-1} \mathcal L^{-A}$. This case does not appear in \cite{BFI2}. We appeal to  Proposition \ref{fouvrytrilinear}, with the choices  $L=N_1VW$, $M=N_5N_4N_2$ and $N=N_3$. We directly check
$$
LN= N_1VW N_3\gg x/( N_5N_4N_2)\gg Q\mathcal L^A,
$$
by assumption.
To check the conditions of $(S3)$, we first notice that the inequalities \eqref{12:05}  and $N_1N_2N_3 N_4 N_5 V W \sim x$ imply 
\begin{equation}\label{12:12}
N_1\leqslant x^{{1\over 3}+4\eps}\text{ and }N_3 \leqslant x^{{2\over 9}+\eps}.
\end{equation}
 It is easy to check that the first inequality
$
L^2N < M^{2-\eps} 
$
is satisfied when one has  $L^4N^3 < x^{2-\eps}$. But this  last inequality is true, since we write 
$$
L^4N^3 =L (LN)^3
\ll x^{{1\over 3} + 6 \eps}\ (x^{11\over 21} )^3 \ll  x^{{40\over 21} +6 \eps},
$$
 by \eqref{12:12} and the hypothesis of this case.

We now see that the second condition
of $(S3)$ $L^3N^4< M^{4-\eps}$ is satisfied when one has 
$L^7N^8< x^{4-\eps}$. To check this inequality, in our case, we write
$$
L^7N^8\ll  (LN)^7 N \ll (x^{11\over 21})^7 x^{{2\over 9}+\eps}\ll x^{{35\over 9}+\eps}\ll x^{4-\eps},
$$
by hypothesis of Case 3.3 and by \eqref{12:12}. 
The last condition of $(S3)$ is trivial. In conclusion \eqref{11:22} is also proved in that case, by choosing $A$ sufficiently large .
\vskip .2cm
$\bullet$ {\it Case 3.4.}  $N_5N_4N_2 \leqslant x^{10\over 21}$.  We appeal
to Proposition \ref{trilinear} (Conditions $(S1)$) with the choices
$K=N_3$, $L=N_1$. We verify each of these conditions by writing 
$$
KL =N_3N_1 \gg x/(VWN_5N_4N_2) \gg x^{11\over 21} x^{- 2\eps}\gg Q\mathcal L^{A_{0}},
$$
by hypothesis of Case 3.4,
$$
K^2L^3 =N_3^2 N_1^3 \ll x^{{13\over 9} +14\eps} \ll Qx \mathcal L^{-A_{0}},
$$ 
by \eqref{12:12} and, similarly
$$
K^4L^2 (K+L) \ll K^4L^3 =N_3^4 N_1^3\ll x^{{17\over 9} +16\eps}\ll x^{2-\eps}.
$$
The proof of \eqref{11:22} is now complete in this case. It follows that it is complete  in all the cases corresponding to $s=5$.

\vskip .2cm
\noindent $\bullet$ {\it Case 4}.
$s=4$. We start from the following configuration
\begin{equation}\label{14:56}
N_1\geqslant N_2\geqslant N_3\geqslant N_4 \ (\geqslant x^{{1\over 6}-\eps}).
\end{equation}
\vskip .2cm
$\bullet$ {\it Case 4.1.} We suppose that $N_4N_1 > x^{{1\over 2}+3\eps}$. As in \cite{BFI2}, we apply Proposition \ref{trilinearsmooth} with  $K=N_4$,  $L=N_1$ and $M=N_2N_3VW$. We check that
$$
KL=N_4N_1 > Q\mathcal L^A_{0}.$$
Using the trivial inequality $N_4 \ll x^{1\over 4}$, we check  the last two conditions of $(S4)$ by writing 
$$
MK^4Q \ll xN_4^3 N_1^{-1} Q\ll x^{{1\over 2}-3\eps}N_4^4 Q\ll x^{2-\eps},
$$
and
$$
MK^2Q^2\ll xN_4N_1^{-1} Q^2 \ll x^{{1\over 2}-3\eps} N_4^2 Q^2\ll x^{2-\eps}.
$$Here also \eqref{11:22} is proved in that case.

\vskip .2cm
$\bullet $ {\it Case 4.2.} We suppose that we have $N_4N_1 <x^{{1\over 2}+3\eps}$
But the following of our discussion will depend on the effect of the factor  $VW$ on the inequality \eqref{14:56}.
\vskip .2cm
$\bullet $ {\it Case 4.2.1.} We now suppose
$$
N_1 \geqslant VWN_2 \geqslant N_3\geqslant N_4\ ( > x^{{1\over 6}-\eps}).
$$
Since we are not in a boundary configuration, we have $N_1 > \Delta (VWN_2)$ or $N_3 >\Delta N_4$. We follow the technique of \cite{BFI2} by applying Proposition \ref{trilinear} (Conditions (S1)), with the choice $K=N_3$ and $L=N_1$. We check that
$$
KL =N_1N_3> (\Delta N_1(VWN_2) N_3 N_4)^{1\over 2} \gg (\Delta x)^{1\over 2} >x^{1\over 2} y^2 > Q \mathcal L^A.
$$
It remains to check the inequalities
\begin{equation}\label{08:45}
K^2L^3 < Qx\mathcal L^{-A},
\end{equation}
and
\begin{equation}\label{17:32}
  K^4L^2 (K+L)\ < x^{2-\eps}.
\end{equation}
It is easy to see that  both \eqref{08:45} \& \eqref{17:32} are  implied by the two inequalities
\begin{equation}\label{lecture}
K^2L^3 < x^{{3\over 2} -\eps} \ \&\  K^4L^3< x^{2-\eps}.
\end{equation} 
In other words, \eqref{11:22} is proved if we suppose the truth of \eqref{lecture}, always under the hypothesis of Case 4.2.1.

We now suppose  that  at least one of the inequalities of \eqref{lecture} is  not satisfied.  Then we turn our attention to Proposition \ref{trilinearsmooth}. To follow the notations of that Proposition, we  fix $K=N_3$,  $L=N_1$ and $M= N_2N_4VW$. In order to check the conditions $(S4)$, we easily see  that they are satisfied if one has
\begin{equation}\label{emergency}
 N_1N_3 >x^{{1\over 2}+\eps},\ N_3^3<N_1 x^{{1\over 2}- 2\eps}\text { and }
 N_3< N_1x^{-2\eps}.
 \end{equation}
 Also recall that the inequalities  $N_1N_2N_3N_4 <x$ and \eqref{14:56} imply
 \begin{equation}
 \label{emerg1}
 N_1N_3^2 \leqslant  x^{{5\over 6} +\eps}.
 \end{equation}

\smallskip
$\bullet$ Suppose that $K^2L^3 = N_1^3 N_3^2 \geqslant  x^{{3\over 2}-\eps}$. Then, we deduce that $N_1^3N_3^3 \geqslant (x^{{1\over 6}-\eps})\cdot (x^{{3\over 2} -\eps})$, which implies that  $N_1N_3 > x^{{1\over 2} +\eps}$. This is the first condition of \eqref{emergency}.

To check the  second condition, we combine with the square of \eqref{emerg1} to write
$
(N_1^3 N_3^2)\cdot (x^{{5\over 6}+\eps})^2 \geqslant x^{{3\over 2} -\eps} (N_1 N_3^2)^2$. This is equivalent to $N_1x^{{1\over 6}+3\eps} \geqslant N_3^2$. This certainly implies the second condition of  \eqref{emergency}since \eqref{emerg1} gives $N_3\leqslant x^{{5\over 18} +\eps}.$

For the last one, we start from $N_3 \leqslant x^{{5\over 18}+\eps}$. However, by hypothesis, we have $N_1 \geqslant (x^{{3\over 2} -\eps} N_3^{-2})^{1\over 3}\geqslant  x^{{17\over 54} -\eps}$. This gives the third inequality of \eqref{emergency} since $17/54 >5/18.$ 

 \smallskip 
 $\bullet$ Suppose that  $K^4L^3=N_1^3N_3^4 \geqslant x^{2-\eps}$. Then we deduce that $N_1^4N_3^4> (x^{2-\eps})\cdot (x^{{1\over 6}-\eps})$, which implies that  $N_1N_3> x^{{1\over 2}+\eps}$. This is the first condition of \eqref{emergency}.
 
 For the second condition, we raise the inequality  \eqref{emerg1} to the power $5/2$, so we have $(N_1^3 N_3^4)\cdot (x^{{5\over 6}+\eps})^{5\over 2} \geqslant  x^{2-\eps} (N_1N_3^2)^{5\over 2}$. This is equivalent to  $N_1 x^{{1\over 6} + 7\eps} \geqslant N_3^2$, from which we deduce the second inequality of \eqref{emergency} as a consequence of $N_3 \leqslant x^{{5\over 18}+\eps}$.  
 
 To check  the last inequality, we start from $N_3 \leqslant x^{{5\over 18}+\eps}$. However, by hypothesis, we have $N_1 \geqslant (x^{{  2} -\eps} N_3^{-4})^{1\over 3}\geqslant  x^{{8\over 27} -2\eps}$. This gives the third inequality of \eqref{emergency} since $8/27 >5/18.$ 
 
\vskip .2cm
$\bullet $ {\it Case 4.2.2.} We now investigate the following situation
$$
N_1 \geqslant N_2 \geqslant N_3\geqslant VWN_4\ ( > x^{{1\over 6}-\eps}).
$$
Since we are not in a boundary configuration, we have $N_1 > \Delta N_2$ or  $N_3> \Delta VWN_4.$ We apply the same technique as in Case 4.2.1, with $K=N_3$, $L=N_1$ and $M=N_2N_4 VW.$ The calculations are the same.

\vskip .2cm
$\bullet $ {\it Case 4.2.3.} We now suppose that we have both conditions
\begin{equation}\label{18:15}
N_1 \leqslant VWN_2\text{ and } N_3\leqslant VWN_4. 
\end{equation}
In other words, $N_1$ has almost the same order of magnitude as $N_2$, the same is true also for $N_3$ and $N_4$. Combining  \eqref{18:15} with the relation $N_1N_2N_3N_4VW \sim x$, we get 
\begin{equation}\label{18:08}
\Bigl({x\over 2 VW}\Bigr)^{1\over 2} \leqslant N_1N_3 \leqslant (xVW)^{1\over 2}.
\end{equation}
Consider the sequence $VWN_1\geqslant N_2\geqslant N_3 \geqslant N_4.$
From the inequalities $(VWN_1)\leqslant (VW)^2 N_2$  and $N_3 \leqslant VWN_4$, we deduce that, necessarily,  we have  $VW>\Delta^{1\over 2}$ otherwise, the above configuration would be a boundary configuration of type $(B_4)$.  We apply Proposition \ref{fouvrytrilinear} with 
$L=N_3$ and $N=N_1VW$. It easy to check that
$$
x\ll N_1N_2N_3N_4 VW \leqslant N_1^2 N_3^2VW =(LN)^2 (VW)^{-1}
$$
which implies that $LN >x^{1\over 2} (VW)^{1\over 2}\geqslant x^{1\over 2}\Delta^{1\over 4} > Q\mathcal L^A$, by the definition of $
\Delta$.

The two first inequalities of the set of conditions $(S3)$,    are consequences of   the inequalities $L^4N^3<x^{2-\eps}$ and $L^7N^8 < x^{4-\eps}$ (see discussion in Case 3.3). Hence, by \eqref{18:08},  we write
$$
L^4N^3=N_3^4 (N_1VW)^3= N_3 (N_1N_3)^3( VW)^3\ll x^{{3\over 2}+10\eps} N_3 < x^{{7\over 4}+11\eps},
$$
and
$$
L^7N^8 = N_3^7( N_1VW)^8 = N_1 (N_1N_3)^7 (VW)^8
\ll  x^{{7\over 2}+20\eps} N_1 \leqslant x^{{23\over 6}+25 \eps},
$$
since, by \eqref{18:15}, we deduce $N_1< x^{{1\over 3}+2\eps}$
and $N_3<x^{{1\over 4}+ \eps}$, from the inequality $N_1N_2N_3N_4\leqslant x$. The last inequality of $(S3)$ is trivially verified.

\vskip .2cm
\noindent $\bullet$ {\it Case 5.}  $s=1$, $2$ or $3$. Recall the inequalities \eqref{assumption1}. For $s=3$, the corresponding sum $\mathcal F_j (\mathcal M, {\mathcal N}, U, W,Q)=\mathcal F_j (\mathcal M, \tilde{\mathcal N},  W,Q)$ satisfies the inequality
\begin{align*}
\mathcal F_j (\mathcal M, {\mathcal N}, &U, W,Q)\leqslant 
\sum_{q\sim Q\atop (q,a)=1} \sum_{v\leqslant 2V\atop (v,q)=1}
\sum_{w\simeq W\atop (w,q)=1} \\
&\Bigl\vert \underset{n_1n_2n_3\equiv a\overline v\overline w\bmod q\atop n_1\simeq N_1,\, n_2 \simeq N_2,\, n_3 \simeq N_3}{\sum\ \sum\ \sum}
\mathfrak z(n_1n_2n_3)
-{1\over \varphi (q)} \underset{(n_1n_2n_3,q)=1 \atop n_1\simeq N_1,\, n_2 \simeq N_2,\, n_3 \simeq N_3}{\sum\ \sum\ \sum}
\mathfrak z(n_1n_2n_3)\ 
\Bigr\vert.
\end{align*}
Since we have $N_1N_2N_3>x^{1-2\eps}>Q^{1\over 1/2+\delta}$, we apply Proposition \ref{deligne} to the expression inside $\vert \cdots \vert$ giving
\begin{align*}
\mathcal F_j (\mathcal M, {\mathcal N}, U, W,Q)&\ll 
\sum_{q\sim Q\atop (q,a)=1} \sum_{v\leqslant 2V\atop (v,a)=1}
\sum_{w\simeq W\atop (w,a)=1} {N_1N_2N_3\over \varphi (q)}\cdot \exp\bigl( -{\eps\over 2} (\log \log x)^2\bigr)\\
&\ll x\mathcal L^{-B_7},
\end{align*}
since $N_1N_2N_3VW \ll x.$ This gives \eqref{11:22}. The case $s=2$ is treated in a similar way. The case $s=1$ is trivial.

 This completes the proof of Theorem \ref{extension}.

  \section{ Proof of Theorem \ref{L--V}}\label{Thm1} Theorem \ref{L--V} is an easy consequence of Proposition \ref{fouv1}.
  \begin{proof} Under the assumptions of Theorem \ref{L--V},  let
  $$
  Y=N\Bigl( 1 -{1\over  2(\log N)^A}\Bigr),   Z_1= N^2\Bigl( 1 -{1\over  2(\log N)^A}\Bigr)\text { and } Z_2= N^2\Bigl( 1 -{1\over   (\log N)^A}\Bigr).
  $$  
  Now consider  the quantity
  $$
S:=  \sum_{Y \leqslant a \leqslant N} \Bigl( \pi (Z_1; a, 1) -\pi (Z_2; a, 1)\Bigr).
  $$   
  This sum is counting (with multiplicities of representations) the primes 
 $ p$ satisfying  $ Z_2 <p\leqslant Z_1$  congruent to $1\bmod a$, with $Y\leqslant a \leqslant N$. Hence, such $p$ are of the form $p=1+ab$ with  $a\leqslant N$ and $b\leqslant    (Z_1-1)/Y \leqslant N.$ 
 
 Proposition \ref{fouv1}  and the Prime Number Theorem give  the relations 
 \begin{align*}
 S&=\bigl(\pi (Z_1)-\pi (Z_2)\bigr)\ \sum_{Y\leqslant a \leqslant N}  {1\over \varphi (a) } + O\bigl( N^2 (\log N)^{-2A-2}\bigr)\\
&\geqslant    \bigl(\pi (Z_1)-\pi (Z_2)\bigr)\cdot    {N-Y\over N} - O\Bigl( {N^2\over (\log N)^{2A+2}}\Bigr)\\
&\geqslant  { N^2\over 8(\log N)^{2A+1} }(1-o(1)) -O\Bigl({N^2\over (\log N)^{2A+2}}\Bigr)\\
&\geqslant 1,
 \end{align*}
 for $N\geqslant N_0 (A)$. 
 Hence, the sum $S$ is not empty, this implies the existence of a $p$ with the required property.
  \end{proof}
 \section{Proof of  Theorem \ref{firstgeneral}}\label{Thm2}
 This proof has many similarities with the proof of Theorem \ref{L--V}. The main difference, is that we shall appeal to Proposition \ref{Q=X^1/2} instead of Proposition \ref{fouv1}.  We are searching for primes of the form $p=ab+1$, with    $b\in \mathcal B$,   $a,\,b\leqslant N$, and $(1-2\delta) N^2   <p\leqslant (1- \delta)N^2 $.  This set of primes $p$ certainly contains the set
 \begin{align*}
\mathcal E (\mathcal B, N):= \bigl\{ p\ &;\  (1-2\delta )N^2  <p\leqslant (1-\delta )N^2  ,\\
 &\ p\equiv 1   \text{ modulo some }  
b\in \mathcal B \text{ satisfying }  (1-\delta) N < b \leqslant N\bigr\},
 \end{align*}
and $\mathcal E (\mathcal B, N)$  is non empty if and only if the sum $S_1$ defined by
$$
S_1:= \sum_{b\in \mathcal B\atop (1-\delta) N< b \leqslant N}\Bigl\{ \pi \bigl((1-\delta )N^2 ; b,1\bigr) -
 \pi \bigl((1-2\delta )N^2  ; b,1\bigr)\Bigr\},
$$
is non zero.
 By  \eqref{2jan1} deduced from Proposition  \ref{Q=X^1/2}, we see  that  $S_1$ satisfies the equality
 \begin{equation}\label{17:21}
 S_1= \Bigl \{\pi \bigl((1-\delta)N^2  )-\pi ((1-2\delta)N^2 \bigl)\Bigr\}\  \sum_{b\in \mathcal B\atop (1-\delta) N <b\leqslant N} {1\over \varphi (b)} +O \Bigl( {N^2\over \log^3 N}\cdot (\log \log N)^{B_1}\Bigr).
  \end{equation}
 By the Prime Number Theorem, we know that the term inside $\{\cdots \}$  in \eqref{17:21} is $\sim \delta N^2/(2\log N)$, as $N\rightarrow \infty$.  
 Inserting this  formula  into \eqref{17:21} and using the assumption \eqref{7jan1},  we obtain $S_1\geqslant 1$, by choosing $c_6=B_1+1$ and $N\geqslant c_7 (\delta)$. This completes the proof of Theorem 
 \ref{firstgeneral}.
 \section{Proof of  Theorem \ref{secondgeneral}}  Both   sequences  $\mathcal A$ 
 and $\mathcal B$ are now quite general  and    to shorten notations, we write
$$
A:=\vert \mathcal A\vert\text{ and } B:=\vert \mathcal B\vert.
$$
  Hence we have the inequalities
  \begin{equation}\label{08:51}
A\geqslant  B \geqslant  N/(\log N)^{\delta}\text{ with } 0<\delta <1,
 \end{equation}
as a consequence of \eqref{lowerbound}.   We are using the Tchebychev--Hooley method as it is done in \cite{SaSt} \& \cite{St}.  Consider the product
 \begin{equation}\label{defE}
 E= E(\mathcal A, \mathcal B, N):=\underset {a\in \mathcal A, \ b\in \mathcal B \atop a,\, b \leqslant N} {\prod\  \prod }(ab+1).
 \end{equation}
Taking logarithms, we deduce the lower bound
$$
  \log E \geqslant \underset{a\in \mathcal A\ b\in \mathcal B}{\sum \ \sum} \log (ab)\geqslant \underset{a\leqslant  A\ b\leqslant B}{\sum \ \sum} \log (ab),
$$
  which leads  to the lower bound
   \begin{equation}\label{12jan10}
   \log E\geqslant (2-o_{\delta }(1)) AB\log N,
   \end{equation}
uniformly under the condition   \eqref{08:51}.

  Let 
 $$
  P=P^+ (E),
 $$
 be the number for which we are looking for a lower bound in terms of $A$,
  $B$ and $N$.
As in \cite[\S 6]{St}, we introduce 
 $$
 E_1 =\prod_{p\leqslant N} p^{v_p ( E)},
 $$
 where $v_p$ is the $p$--adic valuation.  We first prove (compare with \cite [Lemma 3]{St})
 \begin{proposition}\label{12jan1} 
 For every subsets $\mathcal A$ and $\mathcal B$ of $[1,\dots, N]$, with cardinalities $A$ and $B$ satisfying \eqref{08:51}, we have the inequality
      \begin{equation}\label{31jan00}
  \log E_1 \leqslant (1+o_{\delta }(1))\, AB \log N.
  \end{equation}
as $N\rightarrow \infty$.
 \end{proposition}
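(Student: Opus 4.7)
The plan is to apply a Chebyshev--Hooley style decomposition to $\log E_{1}$, exploiting the Bombieri--Vinogradov-type results quoted in \S\ref{Tools}. By the defining identity
\[
\log E_{1} = \sum_{p\leqslant N}(\log p)v_{p}(E) = \sum_{\substack{p^{k}\leqslant N^{2}+1\\ p\leqslant N}}(\log p)\,N(p^{k}),\qquad N(q):=\#\{(a,b)\in\mathcal{A}\times\mathcal{B}: q\mid ab+1\},
\]
the first reduction is to dispose of the higher prime powers. Using the trivial bound $N(p^{k})\leqslant \min(A,B)(N/p^{k}+1)$ and Mertens, the contribution from $k\geqslant 2$ is $\ll \min(A,B)\,N$, which by the density hypothesis satisfies
$\min(A,B)\,N = (N/\max(A,B))\cdot AB \leqslant (\log N)^{\delta}\,AB = o_{\delta}(AB\log N)$ since $\delta<1$.

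The main task is therefore to estimate $S := \sum_{p\leqslant N}(\log p)\,N(p)$, and the goal is $S\leqslant(1+o_{\delta}(1))AB\log N$. The heuristic $N(p)\approx AB/\varphi(p)$ makes this plausible: Mertens gives $AB\sum_{p\leqslant N}\log p/\varphi(p)\sim AB\log N$, exactly the target constant. To realize this, I would split the $p$--range at a threshold $P_{0}=N\exp(-(\log N)^{1-\delta}/\log\log N)$, which is much smaller than $N$ yet close enough that $\log(N/P_{0})=o((\log N)^{1-\delta})$. For $p>P_{0}$, the trivial bound $N(p)\leqslant \min(A,B)(N/p+1)$ yields a total contribution $\ll \min(A,B)\,N\log(N/P_{0}) = o_{\delta}(AB\log N)$, again because $N/\max(A,B)\leqslant (\log N)^{\delta}$. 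For $p\leqslant P_{0}$, I would expand $N(p)$ in multiplicative characters modulo $p$,
\[
N(p) = \frac{1}{\varphi(p)}\sum_{\chi\bmod p}\overline{\chi(-1)}\,S_{\mathcal{A}}(\chi)\,S_{\mathcal{B}}(\chi),\qquad S_{\mathcal{X}}(\chi):=\sum_{x\in\mathcal{X}}\chi(x),
\]
and isolate the principal-character contribution $(A-A_{0})(B-B_{0})/\varphi(p) = AB/\varphi(p)+O(N\max(A,B)/p^{2})$, whose total is exactly the desired $(1+o_{\delta}(1))AB\log N$ by Mertens.

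The hard part will be showing that the non-principal-character piece is negligible on average, i.e.
\[
\sum_{p\leqslant P_{0}}\frac{\log p}{\varphi(p)}\Bigl|\sum_{\chi\neq\chi_{0}}\overline{\chi(-1)}\,S_{\mathcal{A}}(\chi)\,S_{\mathcal{B}}(\chi)\Bigr| = o_{\delta}(AB\log N).
\]
A naive Cauchy--Schwarz combined with the classical large-sieve inequality $\sum_{p\leqslant P_{0}}\sum_{\chi\neq\chi_{0}}|S_{\mathcal{X}}(\chi)|^{2}\ll (N+P_{0}^{2})|\mathcal{X}|$ produces, after partial summation against $\log p/\varphi(p)$, only a bound of order $\sqrt{AB}\,N\,(\log N)^{O(1)}$, which fails to be $o(AB\log N)$ once $\min(A,B)\ll N$. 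This is exactly the point where I expect the deeper inputs of \S\ref{Tools}---Proposition~\ref{Q=X^1/2} and the newly-established Theorem~\ref{extension}, which extend equidistribution of primes in arithmetic progressions beyond the $x^{1/2}$ barrier---to be indispensable: dualized appropriately they control the bilinear form $S_{\mathcal{A}}(\chi)S_{\mathcal{B}}(\chi)$ uniformly up to $p\leqslant P_{0}$. The hypothesis $A,B\geqslant N/(\log N)^{\delta}$ with $\delta<1$ is used throughout to absorb the residual polylogarithmic losses and to secure the sharp leading constant $(1+o_{\delta}(1))$ rather than any absolute constant strictly larger than $1$.
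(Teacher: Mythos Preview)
Your proposal has a genuine gap: you conclude that the deep equidistribution results of \S\ref{Tools} (Proposition~\ref{Q=X^1/2}, Theorem~\ref{extension}) are indispensable here, and you leave the argument unfinished pending their intervention. In fact Proposition~\ref{12jan1} is proved entirely elementarily; the heavy machinery is invoked only later, in the upper bound for $\log E_2$ (\S\ref{6:1}). The paper writes $N(p^k)=\sum_{(h,p)=1}r(\mathcal A,h,p^k)\,r(\mathcal B,-\overline h,p^k)$ with $r(\mathcal U,h,m)=\#\{u\in\mathcal U:u\equiv h\bmod m\}$, splits $\log E_1=\Sigma_1+\Sigma_2$ according as $p^k\leqslant N$ or $N<p^k\leqslant N^2+1$, and for $\Sigma_1$ applies Cauchy--Schwarz jointly in $(p,k,h)$:
\[
\Sigma_1\leqslant\Bigl(\sum_{p\leqslant N}\log p\sum_{p^k\leqslant N}\sum_{h} r(\mathcal A,h,p^k)^2\Bigr)^{1/2}\Bigl(\sum_{p\leqslant N}\log p\sum_{p^k\leqslant N}\sum_{h} r(\mathcal B,h,p^k)^2\Bigr)^{1/2}.
\]
Each factor is handled by the elementary Lemma~\ref{squareoferrors}, whose content is just that $\sum_h r(\mathcal U,h,p^k)^2$ counts pairs $(u_1,u_2)\in\mathcal U^2$ with $p^k\mid u_1-u_2$: the off-diagonal gives at most $|\mathcal U|(|\mathcal U|-1)\log N$ (since $\sum_{p^k\mid d}\log p=\log d$), and the diagonal at most $|\mathcal U|\,\pi(N)\log N$. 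Because $\pi(N)=o(A)=o(B)$ by \eqref{08:51} with $\delta<1$, this yields $\Sigma_1\leqslant(1+o(1))AB\log N$ outright --- main term included, with no separation of a principal character. The tail $\Sigma_2$ is trivial: for $p^k>N$ each $b$ determines at most one $a$, so $\Sigma_2\leqslant B\,\pi(N)\log(N^2+1)=o(AB\log N)$.

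The reason your large-sieve route stalls is not that the problem is deep but that the inequality $\sum_{p\leqslant Q}\sum_{\chi\neq\chi_0}|S_{\mathcal X}(\chi)|^2\ll(N+Q^2)|\mathcal X|$ is wasteful for $Q$ near $N$. Replacing it by the exact orthogonality $\sum_{\chi\bmod p}|S_{\mathcal X}(\chi)|^2=\varphi(p)\sum_{h}r(\mathcal X,h,p)^2$ and then invoking the collision count of Lemma~\ref{squareoferrors} recovers precisely the bound above, with leading constant~$1$ and with no call on \S\ref{Tools}. In short, your character expansion is a legitimate starting point, but the missing idea is the second-moment lemma, not Bombieri--Friedlander--Iwaniec.
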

 \begin{proof} We first recall the following lemma (see \cite[Lemma 4]{SaSt})
 \begin{lemma}\label{squareoferrors}
 Let $N$ be a positive integer and $\mathcal U \subset [1,\dots, N]$. Let $h$ and $m$ be integers with $m\geq1$ and let 
 $$
 r(\mathcal U,h,m):= \bigl\vert \{ u\in \mathcal U\, ;\ u\equiv h\bmod m\}\bigr\vert.
 $$
 We then have the inequality
 $$
 \sum_{p\leqslant N} \log p \sum_{k\leqslant  {\log N\over \log p}}\  \sum_{h=1}^{p^k} \bigl( r(\mathcal U, h,p^k)\bigr)^2 
 \leqslant 
   \vert \mathcal U\vert \ \bigl( \vert \mathcal U\vert -1 +\pi (N) \bigr) \log N.$$
 \end{lemma}
 The proof of Proposition \ref{12jan1} now follows the proof of \cite [4.14]{SaSt}. We have
\begin{align}
 \log E_1 &= \sum_{p\leqslant N} v_p (E) \log p\nonumber\\
 & =\sum_{p\leqslant N} \log p  
 \sum_{k\leqslant { \log  (N^2+1)\over  \log  p}} \ 
 \Bigl\vert
 \bigl \{ 
 (a,b)\in \mathcal A \times \mathcal B\,;\ ab\equiv -1 \bmod p^k
 \bigr\}
 \Bigr\vert \nonumber\\
 &= \Sigma_1 +\Sigma_2,\label{11jan1}
 \end{align}
 where 
 $\Sigma_1$ and $\Sigma_2$ respectively correspond to the cases $1\leqslant k\leqslant{\log N \over \log p}$ and $ {\log N\over \log p} < k\leqslant {\log (N^2+1)\over \log p}$.
Denoting by $\overline h$ the multiplicative inverse of $h\bmod p^k$ and  using the  Cauchy--Schwarz inequality,  we get 
\begin{align}
& \Sigma_1 
 =
 \sum_{p\leqslant N} \log p  
 \sum_{k\leqslant { \log   N\over  \log  p}}  
 \sum_{1\leqslant h \leqslant p^k\atop (h,p^k)=1}r(\mathcal A, h,p^k) r(\mathcal B, -\overline h,p^k)\nonumber\\
 &\leqslant\Bigl\{
 \sum_{p\leqslant N} \log p  
 \sum_{k\leqslant { \log   N\over  \log  p}}  
 \sum_{1\leqslant h \leqslant p^k\atop (h,p^k)=1} r^2(\mathcal A, h,p^k)\Bigr\}^\frac{1}{2}\cdot   \Bigl\{
 \sum_{p\leqslant N} \log p  
 \sum_{k\leqslant { \log   N\over  \log  p}}  
 \sum_{1\leqslant h \leqslant p^k\atop (h,p^k)=1} r^2(\mathcal B, h,p^k)\Bigr\}^\frac{1}{2} \nonumber\\
 &\leqslant  \Bigl\{   A  \bigl(   A-1 +\pi (N) \bigr)\Bigr\}^\frac{1}{2} \cdot \Bigl\{   B  \bigl(   B-1 +\pi (N) \bigr)\Bigr\}^\frac{1}{2}(\log N)\nonumber\\
 &\leqslant (1+o(1))\,AB\, \log N,
  \label{11jan2}
\end{align}
the last lines being a consequence of Lemma \ref{squareoferrors} and of the condition \eqref{08:51}.

For $\Sigma_2$, we remark that we have the inequality $p^k>N$, hence when $b$ is fixed, the equation $ab\equiv -1 \bmod p^k$ has at most one solution in $a$. From this, we  deduce
the inequality
\begin{align}\label{11jan3}
\Sigma_2 &
\leqslant 
\sum_{p\leqslant N} \log p \ \sum_{{\log N \over \log p}< k \leqslant {\log (N^2+1)\over \log p} }
  B \nonumber\\
  & \leqslant   B    \sum_{p\leqslant N} \log (N^2+1) =   B \cdot  \pi (N) \cdot \log (N^2+1).
\end{align}
Putting together \eqref{08:51},  \eqref{11jan1}, \eqref{11jan2} and \eqref{11jan3}, we complete the proof  of Proposition \ref{12jan1}.
 \end{proof}
 
 \subsection{Use of Theorem \ref{extension}} \label{6:1} Let
 $$
 E_2=\prod_{N< p\leqslant P\atop p \mid E}   p^{v_p(E)},
 $$
where $E$ is defined by \eqref{defE}.  We then have
 $$
 \log E_2 =\log E -\log E_1.
 $$
   By \eqref{12jan10}  \& \eqref{31jan00} we have the lower bound  
\begin{equation}\label{12jan11}
 \log E_2 \geqslant (1-o_{\delta }(1))\, AB\, \log N.
\end{equation}
 We are now searching for an upper bound of $\log E_2$.  Since $v_p (ab+1)\leqslant 1$ for any $p$ satisfying $N<p\leqslant P$ and $a$, $b\leqslant N$, we have the equality
$$
v_p(E_2)=\bigl\vert\bigl\{ (a,b)\in \mathcal A\times \mathcal B\,;\  ab+1\equiv 0\bmod p
\big\}\bigr\vert. 
$$
We now use  the property   $\mathcal A \subset [1,\dots,N]$ to write the  inequality
\begin{equation}\label{12jan12}
\log E_2 \leqslant \sum_{(a,b,m,p)\in \mathcal Q} \log p,
\end{equation}
where the sum is over the set  $\mathcal Q$  of  the quadruples $(a,b,m,p)$ defined by
$$
\mathcal Q :=\bigl\{ (a,b,m,p)\,;\ ab+1=pm,\  N<p\leqslant P, \ 1\leqslant a,\, b \leqslant N,\ b\in \mathcal B\bigr\}.
$$
 We want to drop the inequality $a\leqslant N$ in order to apply  Theorem  \ref{extension}. So we include $\mathcal Q$ in the disjoint  union
$$
\mathcal Q \subset \bigcup_{\ell = 0}^{\ell_0}{\mathcal R}_\ell,
$$
where 
\begin{align*}
{\mathcal R}_\ell :=\Bigl\{ (a,b,m,p)\,;\ ab+1=pm&,\ pm\leqslant (1-\kappa)^\ell N^2+1,\  N<p\leqslant P, \\  
& b\in \mathcal B,\ (1-\kappa)^{\ell +1 } N< b \leqslant (1-\kappa)^{\ell  }N\Bigr\},
\end{align*}
where   
 $\kappa =\kappa (N)$ is a function of $N$ tending to zero as $N$ tends to $\infty$ and $\ell_0$ is the integer defined by $(1-\kappa)^{\ell_0 +1}N < 1\leqslant (1-\kappa)^{\ell_0}N.$  This integer $\ell_0$ satisfies
 $
 \ell_0=O (  \kappa^{-1}\log N).$
 Using this decomposition, we transform \eqref{12jan12} into the inequality
\begin{equation}\label{14jan10}
\log E_2 \leqslant
\sum_{\ell =0}^{\ell_0}\ T_\ell,
\end{equation}
with
$$
T_\ell:= \sum_{b\in \mathcal B\atop (1-\kappa)^{\ell +1} N< b \leqslant (1-\kappa)^\ell N}\  \underset{N<p\leqslant P,\ pm\leqslant (1-\kappa )^\ell N^2+1 \atop pm\equiv 1\bmod b}{\sum\ \ \sum}  \log p.
$$ Note that we lose a lot of information over $a \in \mathcal A$ when replacing the equality $pm =1 +ab$ by the congruence condition $pm \equiv 1 \bmod b$.
Since we always have the inequality $\{(1-\kappa)^\ell N\}^2\leqslant 3 \bigl( (1-\kappa)^\ell \,N^2+1)   $ for $\ell \leqslant  \ell_0$,  we are now in good position to apply   Theorem \ref{extension} (with $y=3$) to $T_\ell$. Let $\rho (N^2): =N^2(\log \log N)^{B_2}(\log N)^{-1}$. With  this theorem   we have the equality 
\begin{align}\label{music}
T_\ell &= \sum_{b\in \mathcal B\atop (1-\kappa)^{\ell +1} N< b \leqslant (1-\kappa)^\ell N}\  {1\over \varphi (b)}\  \Bigl\{\underset{N<p\leqslant P,\ pm\leqslant (1-\kappa )^\ell N^2+1 \atop (pm,b)= 1 }{\sum\ \ \sum}  \log p\Bigr\} +O\bigl( \rho ((1-\kappa)^\ell N^2+1)\bigr)\nonumber\\
&=T_{\ell}^{(1)}+O\bigl( T_{\ell}^{(2)}\bigr),
\end{align}
by definition.  Using the equality  \eqref{31jan10} and the Prime Number Theorem, we transform $T_{\ell}^{(1)}$ as follows
 \begin{align}
T_{\ell}^{(1)}&=\sum_{b }\ {1\over \varphi (b)}\  \Bigr\{ \sum_{N<p\leqslant P}  \log p \Bigl( {\varphi (b)\over b}\cdot {(1-\kappa)^\ell N^2\over p} +O(\tau (b))\Bigr)\Bigr\}\nonumber   \\
 &=N^2\cdot \Bigl(\log{P\over N}+o(1)\Bigr) \cdot \Bigl( (1-\kappa)^\ell  \sum_{b     }\ {1\over b} 
\Bigr)  
 +O\Bigl( P \sum_b {\tau (b) \over \varphi (b)}\Bigr)\nonumber
\\
&\leqslant  N^2\cdot \Bigl(\log {P\over N}+o(1))\Bigr)\cdot  \Bigl( {1\over (1-\kappa) N}\cdot \sum_{b }\ {1 } \Bigr)   +O\Bigl( P \sum_b {\tau (b) \over \varphi (b)}\Bigr).\label{music1}
\end{align}
 In \eqref{music1}, the conditions of summation are always:  $b\in {\mathcal B} $ and  $(1-\kappa)^{\ell +1} N< b \leqslant (1-\kappa)^\ell N$. 
We now sum over all the $\ell \leqslant \ell_0$. We first write that
$$
\sum_{\ell =0}^{\ell_0} T_{\ell}^{(2)}\ll      {N^2\over  \kappa \log N}\cdot (\log \log N)^{B_2}.
$$
Combining this relation with   \eqref{14jan10}, \eqref{music} \& \eqref{music1}, and summing $T^{(1)}_{\ell}$ over $\ell$, we have the inequality
$$
\log E_2 \leqslant {BN\over 1-\kappa}\cdot \Bigl(\log {P\over N}+o(1)\Bigr) +O\Bigl(  {N^2\over  \kappa \log N}\cdot (\log \log N)^{B_2}\Bigr) + O\bigl( P\log ^2N \bigr).
$$
Choosing $\kappa =(\log N)^{\delta-1\over 2}$  and recalling  \eqref{08:51} we obtain 
$$
\log E_2 \leqslant \bigl(1+o_\delta (1)\bigr)\cdot \Bigl(\log {P\over N}+o(1)\Bigr)\cdot BN.$$
 Comparing with \eqref{12jan11}, we obtain  the inequality
 $$ \log \frac{P}{N} \geqslant (1-o_\delta(1))\frac{A}{N} \log N-o(1)\geqslant (1-o_\delta(1))\frac{A}{N}\log N.
 $$
 This complets the proof of Theorem \ref{secondgeneral}.

\end{document}